\newtheorem*{acknowledgements}{Acknowledgements}
\newtheorem{theorem}{Theorem}
\newtheorem{corollary}[theorem]{Corollary}
\newtheorem{definition}[theorem]{Definition}
\newtheorem{proposition}[theorem]{Proposition}
\newtheorem{remark}[theorem]{Remark}
\newtheorem{lemma}[theorem]{Lemma}
\numberwithin{equation}{section} \numberwithin{theorem}{section}
\def\SL{\mathop{\rm SL}}
\def\Re{\mathop{\rm Re}}
\def\det{\mathop{\rm det}}%
\def\tr{\mathop{\rm Tr}}%
\def\dim{\mathop{\rm dim}}%
\def\inj{\mathop{\rm inj}}
\def\conf{\mathop{\rm Conf }}%
\def\supp{\mathop{\rm supp}}%
\def\dom{\mathop{\rm Dom}}%
\def\diver{\mathop{\rm div}}%
\def\grad{\mathop{\rm grad}}%
\def\Id{\mathop{\rm Id}}
\newcommand{\wt}{\widetilde}%
\newcommand\cR{\mathcal{R}}
\newcommand\cF{\mathcal{F}}
\def\R{\mathop{\mathbb R}}
\def\C{\mathop{\mathbb C}}
\def\Z{\mathop{\mathbb Z}}
\def\H{\mathop{\mathbb H}}
\newcommand\paperintro%
\newcommand\paperbody%
\begin{document}

\title[Heat trace and relative determinants]
{Asymptotics of relative heat traces and determinants on open surfaces of finite area}
\author{Clara L. Aldana}
\address{Albert Einstein Institute, MPG}
\email{clara.aldana@aei.mpg.de}
\thanks{This article is register at the MPG, AEI-2012-200}

\begin{abstract}
The goal of this article is to prove that on surfaces with asymptotically cusp ends the relative determinant
of pairs of Laplace operators is well defined.
We consider a surface with cusps $(M,g)$ and a metric $h$ on the surface that is a
conformal transformation of the initial metric $g$.
We prove the existence of the relative determinant of the pair
$(\Delta_{h},\Delta_{g})$ under suitable conditions on the conformal factor.
The core of the paper is the proof of the existence of an asymptotic expansion of the relative heat trace for small times.
We find the decay of the conformal factor at infinity for which
this asymptotic expansion exists and the relative determinant is defined.

Following the paper by B. Osgood, R. Phillips and P. Sarnak about extremal of determinants
on compact surfaces, we prove Polyakov's formula for the
relative determinant and discuss the extremal problem inside a conformal class.
We discuss necessary conditions for the existence of a maximizer.
\end{abstract}

\keywords{Surfaces with asymptotically cusp ends; heat kernels; asymptotic expansion of heat traces; relative determinants.}
\maketitle

\paperintro
\section*{Introduction}
In this paper we study the relative determinant of Laplace
operators on surfaces with asymptotically cusp ends and the asymptotic expansion of the
corresponding relative heat traces for small values of time.
A surface with asymptotically cusp ends is
defined in Section \ref{subsection:conftransf}.

Regularized determinants of
elliptic operators play an important role in many fields of
mathematics and mathematical physics. They were initially introduced
by D.B. Ray and I.M. Singer in \cite{RaySinger} in relation to
$R$-torsion. The regularized determinant of the Laplace operator on a
compact Riemannian manifold is defined via a zeta function regularization process. It is an important spectral invariant. For instance, in the
$2$-dimensional case, B. Osgood, R. Phillips and P. Sarnak (OPS)\footnote{From now on we abbreviate B. Osgood, R. Phillips and P. Sarnak
as OPS.} showed in \cite{OPS1} that the determinant,
considered as a functional on the space of metrics, has very
interesting extremal properties. They proved the following result:
Let $M$ be a closed surface, then in a given conformal
class, among all metrics of unit area, there exists a unique metric
of constant curvature at which the regularized determinant attains a
maximum. They also proved a corresponding statement for compact
surfaces with boundary and suitable conditions at the boundary.

Relative determinants were introduced in a general setting by W. M\"uller in \cite{Mu3} as a way to
genera\-li\-ze regularized determinants in the compact case. Previously, a relative determinant for admissible
surfaces was introduced by R. Lundelius in \cite{Lundelius}, and for Dirac operators in $\R^n$ by V. Bruneau in
\cite{Bruneau}. A good example of a non-compact space is a surface with cusps.
A surface with cusps is a $2$-dimensional complete Riemannian manifold $(M,g)$ of finite
area such that outside a compact set the metric is hyperbolic. The
hyperbolic ends are called cusps.
The Laplace operator $\Delta_{g}$ associated to the metric $g$ on $M$ has
continuous spectrum. Therefore its zeta
regularized determinant can not be defined in the same way as in the compact case.
Here is when relative determinants enter into the play.
The relative determinant is defined for a pair of
non-negative self-adjoint operators $(A,B)$ in a Hilbert space provided they
satisfy certain conditions. It is defined through a zeta function using the trace of
the relative heat semigroup $\tr(e^{-tA}-e^{-tB})$, $t>0$.

For surfaces with cusps in \cite{Mu3} W. M\"uller proved that the relative
determinant of the Laplacian is well defined when the Laplacian is
compared with a model operator defined on the cusps.
In this paper we extend this result to surfaces with asymptotically cusp ends. We
also prove Polyakov's formula for metrics for which the relative determinant of the corresponding Laplacians is
defined. The analysis of the extremal of the determinant in this
case is performed in the same way as in OPS, \cite{OPS1}. Unfortunately, the maximizer (the metric of constant
curvature) is not always among the class of metrics for which we can define the relative determinant.

The paper is organized as follows:

We start by fixing a surface with cusps and a class of metrics on $M$ that are conformal to
$g$ and that satisfy sui\-ta\-ble conditions. Let
$h=e^{2\varphi}g$ be a metric in the conformal class of $g$; if the cusps are \lq \lq kept\rq\rq \ but
the metric $h$ is not hyperbolic on them, then we say that $(M,h)$ is a surface with asymptotically cusp ends.
Associated to the metric $h$, there is a Laplacian
which we denote by $\Delta_{h}$. We will consider the relative
determinant of pairs of the form $(\Delta_{h},\Delta_{g})$ and
$(\Delta_{h},\bar{\Delta}_{1,0})$, where
$\bar{\Delta}_{1,0}$ is a model operator over $M$ that is associated
to the cusps.

In Section \ref{section:defandnot} we introduce all the notation and background theory
that we need throughout the
paper. In Section \ref{section:trace_class1} we prove the trace class property of the relative
heat operator for all positive values of $t$, when the conformal factor $\varphi$ as well as its
derivatives up to second order decay as $O(y^{-\alpha})$,
$\alpha>0$ as $y$ goes to infinity; here we are using coordinates $(y,x)$ in the cusps
$Z=[1,\infty)\times S^{1}$.

In Section \ref{section:asypexpant0}, we prove the existence of an asymptotic expansion of the
relative heat trace for small values of $t$.
Theorem \ref{theorem:asympexpanrht} gives precise conditions for the existence of such an expansion up to order $\nu\geq 1$.
The expansion exists if
the function $\varphi|_{Z}(y,x)$ and its derivatives up to second order are
$O(y^{-k})$ as $y$ goes to infinity, with $k\geq 5\nu + 8$; although if $\nu \geq 3$,
more derivatives of $\varphi$ should decay at infinity as well. The precise decay of the higher derivatives
is given in the statement of the theorem.

The proof of this result is very technical but uses classical
methods such as parametrices, Duhamel's principle, upper bounds of heat kernels,
universal coverings, very particular inequalities, and the explicit form of the local heat invariants.
The idea of the proof is to write the relative heat trace as an integral
over the manifold, and to split this integral into three areas of integration: the compact part,
a cutoff of the cusps and the end of the cusps. The cutoff is done at a height $a>1$ that is fixed at the beginning.
The conditions on the conformal factor come from assumptions in different parts of the proof.
Along the paper, we will explain each of these assumptions in detail.
The main point is that later in the proof we let $a$ be a function of $t$ and take the limit as $t\to 0$.
Then, the integral over the cutoff will have a complete asymptotic expansion as $t\to 0$ (as $a\to \infty$).
The integral on the end of the cusps is estimated by a term $t^{\nu}$, $\nu>0$.
The estimation is obtained using the trace norm of some auxiliary operators.
The order $k\geq 5\nu + 8$ in the decay condition of the conformal factor comes from this bound.

In Section \ref{section:relativedet}, we use the previous results to define
the relative determinant of the pairs $(\Delta_{h},\Delta_{g})$ and $(\Delta_{h},\Delta_{1,0})$ using
relative zeta functions. In spite of not having an optimal result in Section \ref{section:asypexpant0},
the result is good enough to have a well-defined relative
determinant for a pair of metrics $(h,g)$ satisfying the conditions above.

In Section \ref{section:variational} we study $\det(\Delta_{h},\Delta_{1,0})$ as
a functional on metrics of a given area in a conformal class and look for its extremal values.

We give a proof of a Polyakov's-type
formula for $\det(\Delta_{h},\Delta_{1,0})$.
The proof of this formula follows the same lines as the proof of OPS in the
compact case in \cite{OPS1} and the formula is the same as the one
obtained by R. Lundelius in \cite{Lundelius} for heights of pairs of
admissible surfaces. However, let us point out that our methods are
different from the ones in \cite{Lundelius}. In the same way as in \cite{OPS1} and
in \cite{Lundelius}, we see that if there exists a maximum it is
attained at the metric of constant curvature. The equation relating
the curvature of the metrics $g$ and $h=e^{2\varphi}g$ is
$R_{h}=e^{-2\varphi}(\Delta_{g}\varphi +R_{g})$. The study of the associated
differential equation for $\varphi$, together with the
constant curvature condition in the cusps for $g$ and constant curvature
everywhere for $h$, leads to a precise decay for the function
$\varphi$ at infinity. Unfortunately this decay is not included in
the conditions required to define the relative determinant. Therefore the
metric of constant curvature will not be in the conformal
class under consideration unless we start with a metric of constant curvature.\\

In relation with this problem, there is a recent paper by P. Albin, F. Rochon and the author, \cite{AAR}.
We worked with renormalized integrals to define renormalized determinants of Laplacians on surfaces that
have asymptotically hyperbolic ends, cusps as well as funnels (funnels involve infinite area).

An earlier version of this paper was published in the ArXiv, under the title
\lq\lq Relative determinants of Laplacians on surfaces with asymptotically cusp ends.\rq\rq

\paperbody

\section{Notation and definitions}
\label{section:defandnot}

\subsection{Relative determinants} \label{subsection:reldet}
Let us recall the definition of relative determinants introduced by W. M\"uller in \cite{Mu3}:
The relative determinant is defined for two
self-adjoint, nonnegative linear operators, $H_1$ and $H_0$, in a
separable Hilbert space ${\mathcal H}$ satisfying the following
assumptions:

\begin{enumerate}
\item For each $t>0$, $e^{-t H_1} - e^{-t H_0}$ is a trace class operator.
\item As $t\to 0$, there is an asymptotic expansion for the relative trace of the
form:
$$\tr(e^{-t H_1}-e^{-t H_{0}}) \sim \sum_{j=0}^{\infty}\sum_{k=0}^{k(j)} a_{jk} t^{\alpha_{j}}
\log^{k}t,$$ where $-\infty < \alpha_0 < \alpha_1 < \cdots$ and
$\alpha_k \to \infty$. Moreover, if $\alpha_j=0$ we assume that
$a_{jk}=0$ for $k>0$.
\item $\tr(e^{-t H_1}-e^{-t H_0}) = h + O(e^{-ct})$, as $t\to \infty$ for some constant $c>0$
where $h=\dim \ker H_1 -\dim \ker H_0$.
\end{enumerate}
These properties allow us to define the relative zeta function as:
\begin{equation}\zeta(s;H_1,H_0)= {\frac{1}{\Gamma(s)}} \int_{0}^{\infty}
(\tr(e^{-t H_1}-e^{-t H_{0}})-h) t^{s-1} dt.\label{eq:relazetafunct}\end{equation}
Using the meromorphic continuation of $\zeta(s;H_1,H_0)$ to the complex plane, the relative
determinant is defined as:
$$\det(H_1, H_0) := e^{-\zeta'(0;H_1,H_{0})}.$$

In a more general setting, condition $(3)$ is replaced by an
asymptotic expansion as $t\to\infty$. In that case, in order to define the
relative zeta function, the integral in (\ref{eq:relazetafunct}) has to be split in two parts,
see \cite{Mu3}.

\subsection{Surfaces with cusps} \label{subsection:swc} A surface with cusps (swc)\footnote{
From now on we abbreviate \lq\lq surface with cusps\rq\rq \ as swc.} is a $2$-dimensional Riemannian manifold that is complete,
non-compact, has finite volume and is hyperbolic in the complement
of a compact set. Therefore it admits a decomposition of the form
\begin{equation*}
M=M_0\cup Z_{1}\cup\cdots\cup Z_{m},
\end{equation*}
where $M_0$ is a compact surface with smooth boundary and for each
$i=1,...,m$ we assume that
$$Z_i\cong [a_i,\infty)\times S^{1},\quad g|_{Z_i}=y_{i}^{-2}(dy_{i}^2+dx_{i}^2),\quad a_{i}>0$$
The subsets $Z_{i}$ are called cusps. Sometimes we denote $Z_{i}$ by
$Z_{a_{i}}$ to indicate the \lq \lq starting
point\rq \rq \ $a_{i}$. For simplicity, by {\bf $S^1$ we mean the circle with radius $1/2\pi$ with length $1$}.
Instances of surfaces with cusps are
quotients of the form $\Gamma(N) \backslash {\mathbb H}$, where
${\mathbb H}$ is the upper half plane and $\Gamma(N)\subseteq
\SL_{2}(\Z)$ is a congruence subgroup, i.e. $\Gamma(N)=\{\gamma \in
\SL_{2}(\Z) \vert \gamma \equiv \Id \pmod{N}\}$. These quotients
play an important role in the theory of automorphic forms.

To any surface with cusps $(M,g)$ we can associate a compact surface
$\overline{M}$ such that $(M,g)$ is diffeomorphic to the complement
of $m$ points in $\overline{M}$.
Let $p$ denote the genus of the compact surface $\overline{M}$; then
the pair $(p,m)$ is called the conformal type of $M$.\\

Later we use the following estimate of the Riemannian distance in the
cusp $Z$
$$d_{g_0}(z,z') \geq \vert\log(y/y')\vert,$$
for $z=(y,x)$, $z'=(y',x')$, see for example \cite{Mu}.\\

For any oriented Riemannian manifold $(M,g)$ the Laplace-Beltrami
ope\-ra\-tor on functions is defined as
$\Delta f = -\diver \grad f$. It is equal to $\Delta=d^{*}d$. We
consider positive Laplacians. If $(M,g)$ is
complete, $\Delta$ has a unique closed extension that we denote by
$\Delta_{g}$.

On a cusp $Z$, the Laplacian is given by
$$\Delta_{Z}= -y^{2} \left({\frac{\partial^{2}}{\partial y^{2}}} +
{\frac{\partial^{2}}{\partial x^{2}}}\right).$$

Let us consider the following operators:
\begin{definition}\label{def:Laplmcuspas}
Let $a>0$, let $\Delta_{a,0}$ denote the
self-adjoint extension of the operator
$$-y^{2} {\frac{\partial^{2}}{\partial y^{2}}}: C_{c}^{\infty}((a,\infty))\to L^{2}([a,\infty),y^{-2}dy)$$
with respect to Dirichlet boundary conditions at $y=a$. The
domain of $\Delta_{a,0}$ is then given by $\dom(\Delta_{a,0}) =
H_{0}^{1}([a,\infty)) \cap H^{2}([a,\infty))$, where
$H_{0}^{1}([a,\infty))=\{f\in H^{1}([a,\infty)): f(a)=0\}$.

Let $\bar{\Delta}_{a,0}=\oplus_{j=1}^m
\Delta_{a_{j},0}$ be defined as the
direct sum of the self-adjoint operators
$\Delta_{a_{j},0}$ defined above. The operator $\bar{\Delta}_{a,0}$
acts on a subspace of $\oplus_{j=1}^m
L^2([a_j,\infty),y_j^{-2}dy_j)$.
\end{definition}

Now, let $a>0$, let $Z_{a}$ be endowed with the hyperbolic metric
$g$ and let $\Delta_{Z_{a},D}$ be the
self-adjoint extension of
$$-y^{2}\left( {\frac{\partial^{2}}{\partial
y^{2}}}+{\frac{\partial^{2}}{\partial x^{2}}}\right):
C_{c}^{\infty}((a,\infty)\times S^{1})\to L^{2}(Z_{a},dA_{g})$$
with respect to Dirichlet boundary conditions at
$\{a\}\times S^{1}$. It is known that the operator $\Delta_{Z_{a},D}$ can be
decomposed as follows: Put
\begin{equation}L^{2}_{0}(Z_{a})=\{f\in
L^{2}(Z_{a},dA_{g})\vert \int_{S^{1}} f(y,x) dx = 0 \text{ for a. e.
} y\geq a\}.\label{eq:spavanizfmZa}\end{equation}
The orthogonal complement of
$L^{2}_{0}(Z_{a})$ in $L^{2}(Z_{a},dA_{g})$ consists of functions
that are independent of $x\in S^{1}$.

Then we can decompose $L^{2}(Z_{a},dA_{g})$ as the orthogonal direct
sum
$$L^{2}(Z_{a},dA_{g}) = L^{2}([a,\infty),y^{-2}dy) \oplus L^{2}_{0}(Z_{a}).$$
This decomposition is invariant under $\Delta_{Z_{a},D}$ so in terms
of this decomposition we can write $\Delta_{Z_{a},D} = \Delta_{a,0}
\oplus \Delta_{Z_{a},1}$, where
$\Delta_{Z_{a},1}$ acts on
$L^{2}_{0}(Z_{a})$.

\begin{remark}
The operator $\Delta_{Z_{a},1}$ has compact resolvent; in particular
it has only point spectrum, see Lemma 7.3 in \cite{MuSa}. In
addition, the counting function for $\Delta_{Z_{a},1}$,
$N_{\Delta_{Z_{a},1}}(\lambda) = \# \{\tilde{\lambda}_{j}\leq
\lambda\}$, where $\{\tilde{\lambda}_{j}\}$ are the eigenvalues of
$\Delta_{Z_{a},1}$, satisfies $N_{\Delta_{Z_{a},1}}(\lambda)\sim
{\frac{\lambda}{4 \pi}} A_{g}$. See \cite[Thm.6]{ColinV}. This
implies that the heat operator $e^{-t\Delta_{Z_{a},1}}$ is trace
class.
\end{remark}

\subsection{Spectral theory of surfaces with cusps}
\label{section:stswc} For the spectral theory of manifolds with cusps
we refer the reader to W. M\"uller \cite{Mu}, Y. Colin de Verdi\`{e}re \cite{ColinV}, and the references therein.
The results in \cite{Mu} hold for any dimension. For surfaces in
particular we refer to \cite{Mu2}. Here we recall only the main
facts and definitions that we use in this article.

For a surface with cusps $(M,g)$, the spectrum of the Laplacian
$\sigma(\Delta_{g})$ is the union of the point spectrum $\sigma_{p}$
and the continuous spectrum $\sigma_{c}$. The point spectrum consist
of a sequence of eigenvalues
$$0=\lambda_{0}<\lambda_{1}\leq \lambda_{2} \leq \dots $$
Each eigenvalue has finite multiplicity, and the counting function
$N(\Lambda)= \# \{\lambda_{j}\vert \lambda_{j}\leq \Lambda^{2}\}$
for $\Lambda>0$ satisfies $\limsup N(\Lambda) \Lambda^{-2} \leq
A_{g}(4\pi)^{-1}$, where $A_{g}$ denotes the area of $(M,g)$.
Depending on the metric, the set of eigenvalues may be infinite or
not.

The continuous spectrum $\sigma_{c}$ of $\Delta_{g}$ is the interval
$[{\frac{1}{4}},\infty)$ with multiplicity equal to the number of
cusps of $M$. For a proof of this fact, see for example
\cite[p.206]{Mu}. The spectral decomposition of the absolutely
continuous part of $\Delta_{g}$ is described by the generalized
eigenfunctions $E_{j}(z,s)$, for $j=1,\dots, m$ with $z\in M$, $s\in
\C$. To each cusp we can associate such generalized eigenfunctions,
they are also called Eisenstein functions by
analogy with the Eisenstein series on hyperbolic surfaces. They are
closely related to the wave operators
$W_{\pm}(\Delta_{g},\bar{\Delta}_{a,0})$ and to the scattering
matrix $S(\lambda)$. For details, see \cite[sec.7]{Mu}. The main properties of the Eisenstein functions and the scattering
matrix can be found in \cite[Theorem 7.24]{Mu}.

\subsection{Conformal transformations} \label{subsection:conftransf}
In this section we give few properties of metrics that are conformal to each other.

A conformal transformation of a metric $g$ on $M$ is a metric $h$
defined as $h=\rho g$ where $\rho\in C^{\infty}(M)$ and $\rho>0$.
In this paper we write the function $\rho$ as $\rho = e^{2\varphi}$ with $\varphi \in C^{\infty}(M)$.
{\it We call the function $\varphi$ the conformal factor}.
Depending on the case the conformal factor may have compact support or
not. If the support is not compact we require $\varphi$ as well as some of its derivatives
to decay at infinity. In what follows the metric $h$ will always denote a conformal transformation
of $g$.

Two metrics $g_{1}$, $g_{2}$ are said to be quasi-isometric if there exist
constants $C_{1}, C_{2} >0$ such that
\begin{equation*}
C_{1}  g_{1}(z) \leq g_{2}(z) \leq C_{2}  g_{1}(z), \quad \text{for
all } z\in M,
\end{equation*}
in the sense of positive definite forms.

Quasi-isometric metrics have equivalent geodesic distances. The
associated $L^{2}$-spaces coincide as sets, thought the inner
product is not the same.
\begin{remark}
Let $h=e^{2\varphi}g$. If the function $\varphi$ is
bounded on $M$, the metrics $g$ and $h$
are quasi-isometric and the geodesic distances, $d_{g}$ and $d_{h}$,
are equivalent. If in addition the metric $g$ is complete, so is the
metric $h$.
\end{remark}

Let us first give a handwaving definition of what we mean by a
surface with asymptotically cusps ends. The reason to do that is
that we need flexibility in the conditions on the conformal factors:\\

\textsl{A surface with asymptotically cusp ends (swac)\footnote{
From now on we abbreviate \lq\lq surface with asymptotically cusp ends\rq\rq \ as swac.} is a surface $(M,h)$
where the metric $h$ is a conformal transformation of the metric on
a swc $(M,g)$ such that the conformal factor as well
as some of its derivatives have a suitable decay in the cusps.}\\

Now, let $(M,g)$ be a swc and $h$ be as above. A point
$z=(y,x)$ in a cusp has injectivity radius ${\inj}_{g}(z)\sim {\frac{1}{y}}$. If we
assume that $\Delta_{g}\varphi=O(1)$ as $y\to \infty$,
the surface $(M,h)$ has bounded Gaussian curvature.
Then by \cite[Prop.2.1]{MuSa}, the injectivity radius of both metrics are comparable. Thus
the injectivity radius of a swac also vanishes.\\

Let $A_{g}$ denote the area of $(M,g)$, $dA_{g}$ the volume element,
and $R_{g}(z)$ its Gaussian curvature. Let $A_{h}$, $dA_{h}$ and
$R_{h}$ be the quantities corresponding to $(M,h)$, for any
conformal transformation $h$ of $g$. Let $\Delta_{h}$ be the
Laplacian associated to $h$. Then the following relations hold:
\begin{equation*}
dA_{h} = e^{2\varphi} dA_{g},\quad \Delta_{h} = e^{-2\varphi}
\Delta_{g},\quad R_{h} = e^{-2\varphi} (\Delta_{g}\varphi +
R_{g})\quad
\end{equation*}

The domains of the Laplacians $\Delta_{g}$ and $\Delta_{h}$ lie in
different Hilbert spaces. Thus, sometimes it is necessary to consider
a unitary map between the spaces $L^{2}(M,dA_{g})$ and
$L^{2}(M,dA_{h})$. From the definition of the metrics and the
transformation of the area element the unitary map is given by:
\begin{equation} T:L^{2}(M,dA_{g})\to L^{2}(M,dA_{h}), \ f\mapsto
e^{-\varphi}f.\label{eq:defT}\end{equation}
The Laplacian operators transform in the following way:
\begin{align}T^{-1}\Delta_{h}T f &= e^{-2\varphi}\left(\Delta_{g} f + 2\langle \nabla_{g} f, \nabla_{g}
\varphi\rangle_{g} - (\Delta_{g} \varphi + \vert \nabla_{g} \varphi
\vert_{g}^{2})f\right) \label{eq:transflaplaceop}\\
T \Delta_{g} T^{-1} f &= e^{2\varphi}\left(\Delta_{h}f - 2 \langle
\nabla_{h} \varphi, \nabla_{h} f\rangle_{h} + (\Delta_{h}\varphi -
\vert \nabla_{h}\varphi \vert_{h})f\right)\notag
\end{align}

Note that the operators $T^{-1}\Delta_{h}T$ and $T\Delta_{g}T^{-1}$
are self-adjoint in the corresponding transformed domain.\\

Let us finish this section recalling Gauss-Bonnet theorem on a swc.
The Euler characteristic a surface $M$ with $m$ cusps
is given by $\chi(M)= (2-2p-m)$, where $p$
is the genus of the compact surface $\overline{M}$ defined in
Section \ref{subsection:swc}. A Gauss-Bonnet formula is valid in
this setting:
$$\int_{M} R_{g} dA_{g} = 2\pi \chi(M),$$
where $R_{g}$ denotes the Gaussian curvature of the metric $g$. The
same formula is valid for the metric $h=e^{2\varphi}g$ when
$\varphi$ and $\Delta_{g}\varphi$ suitably decay at infinity,
since
$$\int_{M} R_{h} \ dA_{h} =\int_{M} e^{-2\varphi} (\Delta_{g}\varphi + R_{g}) e^{2\varphi} \ dA_{g} =
\int_{M} R_{g}\ dA_{g}.$$

\subsection{Heat kernels and their estimates}
\label{subsection:heatkernels&estimates}
\subsubsection{Heat kernels}
\label{subsection:heatkernels}
The heat semigroup associated to a closed self-adjoint operator can
be constructed using the spectral theorem. For the existence and
uniqueness of the heat kernel on a complete open manifold with Ricci
curvature bounded from below see J. Dodziuk, \cite{Dodziuk}. For the main
properties of heat kernels see \cite{Dodziuk} and \cite{Chavel}.

Let $(M,g)$ and $h=e^{2\varphi}g$ be as above, and let
$e^{-t\Delta_{h}}$, $e^{-t\Delta_{g}}$, $e^{-t\Delta_{a,0}}$ denote
the heat semigroups associated to the Laplacians $\Delta_{h}$,
$\Delta_{g}$ and $\Delta_{a,0}$, respectively. Since the Laplacians
are positive, the heat equation is $\Delta +
\partial_{t}=0$. Let $K_{h}(z,z',t)$ and $K_{g}(z,z',t)$ denote the
heat kernels corresponding to $\Delta_{h}$ and $\Delta_{g}$
respectively. The heat kernel on a surface
with cusps was constructed by W. M\"uller in \cite{Mu}.

Like the Laplacians, the heat semigroups act on different spaces.
The operator $e^{-t \Delta_{h}}$ may act on $L^{2}(M,dA_{g})$, but
it is not self-adjoint with respect to this inner product. To make
$e^{-t \Delta_{h}}$ and $e^{-t \Delta_{g}}$ act on the same space
and preserve self-adjointness we use the unitary map
$T$ defined by (\ref{eq:defT}).
The transformed operators $T^{-1} e^{-t \Delta_{h}} T$ and $T e^{-t
\Delta_{g}} T^{-1}$ are self-adjoint on the corresponding space.
The integral kernel of the transformed operator $T^{-1}
e^{-t \Delta_{h}} T : L^{2}(M,dA_{g})\to L^{2}(M,dA_{g})$ is given
by $K_{T^{-1} e^{-t \Delta_{h}}
T}(z,z',t)=e^{\varphi(z)}K_{h}(z,z',t)e^{\varphi(z')}$.

\subsubsection{Estimates of the heat kernels}
\label{subsection:estimates}
In this section we recall the bounds of the heat kernels, since we use them
repeatedly. If the manifold is closed, there exists a constant $c>0$
such that for any fixed $0<\tau<\infty$, the heat kernel satisfies
the following bounds
\begin{equation}K(x,y,t)\ll t^{-n/2} e^{-{\frac{c d(x,y)^{2}}{t}}},
\quad \text{for } t\leq \tau.\label{eq:esthkcptm}\end{equation}
If the manifold has a boundary, consider the closed self-adjoint
extension of the Laplacian with respect to Dirichlet boundary conditions.
In this case, let $K\subset M$ be compact and
a $\tau>0$, then there exist positive constants $c, c'$ such that
$$K_{D}(x,y,t)\leq c' t^{-n/2}(e^{-{\frac{c d(x,y)^{2}}{t}}} + e^{-\frac{d(y, \partial M)^{2}}{8t}}),$$
for $(x,y,t)\in K\times M \times (0,\tau]$, see \cite[chapter
VII]{Chavel}.

Now, let $\wt{Z}={\mathbb R}^{+}\times S^{1}$ be the complete cusp. Let us consider the
hyperbolic metric on it, $g_{0} = y^{-2}(dy^{2}+dx^{2})$. Then $(\wt{Z},g_{0})$ is a complete Riemannian manifold and it is called a \emph{horn}.
Let $\Delta_{1}$ be the
unique self-adjoint extension of the Laplacian defined on
$C_{c}^{\infty}({\mathbb R}^{+}\times S^{1})$. The notation $\Delta_{1}$ is
arbitrary. The construction of the heat kernel for $\Delta_{1}$ on ${\mathbb
R}^{+}\times S^{1}$ can be found in \cite{Mu}.
We denote this heat kernel by $K_{1}$.

Let $\tau>0$ be arbitrary, then there exist constants $C, c> 0$ such
that for $0<t<\tau$, $y, y'\geq 1$, and $k, l, m \in {\mathbb N}$ one
has:
\begin{equation} \label{eq:estdK1} \left\vert {\frac{\partial^{k}}{\partial t^{k}}} d_{z}^{l}
d_{z'}^{m} K_{1}(z,z',t) \right\vert \leq
C (y y')^{{\frac{1}{2}}} t^{-1-k-l-m}
        \exp{\left({-{\frac{c d_{g_0}^{2}(z,z')}{t}}}\right)}\end{equation}
where $d_{g_0}$ the hyperbolic distance in the horn, and the constants depend on $\tau$, see \cite[Prop.2.32]{Mu}.

Let $(M,g)$ be a surface with one cusp that we denote by $Z$, $Z=[a,\infty)\times S^1$ for some $a\geq1$.
Let $i(z)$ be the function given by:
\begin{equation} i(z) = \left\{
\begin{array}{ll}
    1, & \text{if $z\in M\setminus Z$;} \\
    y, & \text{if $z\in Z$ and $z=(y,x)$.} \\
\end{array}
\right. \label{eq:defi}\end{equation}
Given $\tau>0$, there exist $C, c > 0$ such that
\begin{equation} \label{eq:estK} \vert K_{g}(z,z',t)\vert  \leq C (i(z)i(z'))^{{\frac{1}{2}}} t^{-1}
        \exp{\left({-{\frac{c d_{g}^{2}(z,z')}{t}}}\right)}\end{equation}
    for $0<t<\tau$, where $d_g$ is the Riemannian distance in $(M,g)$, see \cite[eq.(4.12)]{Mu}.

Let us now go back to the metric $h=e^{2\varphi}g$. Its restriction to $Z$ can be extended
to a metric on the horn $\wt{Z}$ in the
following way: On $\wt{Z}$ we have the hyperbolic metric $g_{0}$, and
$g\vert_{Z}=g_{0}$. We start by extending the function
$\varphi\vert_{Z}$ to a smooth function $\widetilde{\varphi}$ on
$\wt{Z}$ that vanishes in a small neighborhood of zero. Then on
$(0,\infty)\times S^{1}$ we define $h$ as
$h:=e^{2\wt{\varphi}}g_{0}$. It is a complete metric and $h=g_{0}$
close to the boundary $\{0\}\times S^{1}$. In this way we can define the Laplacian on
$(\wt{Z},h)$. Denote its unique self-adjoint extension by
$\Delta_{1,h}$. Clearly $\Delta_{1,h}=e^{-2\wt{\varphi}}\Delta_{1}$.
The heat kernel associated to $\Delta_{1,h}$ is denoted by
$K_{1,h}(z,z',t)$, for $z,z' \in \wt{Z}$ and $t>0$.

The estimates of the heat kernel of the operator $\Delta_{1,h}$ can be derived from
S. Y. Cheng, P. Li and S. T. Yau's paper \cite{ChLY}, Theorems $4$, $6$ and $7$.
However, in the estimates appears the injectivity radius to a power $\alpha$
that depends only on the dimension of the manifold; from the proof in \cite{ChLY} it is not clear how to determine
the value $\alpha = 1$ that we need. In order to pin down the value of $\alpha$ in this particular case we prove
in Appendix \ref{appendix:hkeacm} the following lemma.

\begin{lemma} \label{lemma:hkeacm}
Let $h$ and $g$ be as above and such that $\varphi$ and $\Delta_{g} \varphi$ decay in the cusp.
Then the heat kernel $K_{h}$ satisfies:
\begin{equation}K_{h}(z,z',t) \ll
       (i(z)i(z'))^{{\frac{1}{2}}} t^{-1}
\exp{\left({-{\frac{\tilde{c} \ d_{h}^{2}(z,z')}{t}}}\right)}
\label{eq:estKh}
\end{equation} for $0<t<\tau$, where $\tilde{c}>0$ is a constant.

Let $*$ denote the metric $g$ or $h$, then derivatives of the heat kernel $K_{*}$ satisfy:
\begin{equation}
\vert \nabla K_{*} (z,z',t)\vert \leq c\ (i(z)i(z'))^{1/2}
t^{-3/2}
\exp{\left( -{\frac{\tilde{c}\ d_{*}^{2}(z,z')}{t}}\right)}, \text{ and }
\label{eq:estgrdK*}
\end{equation}
\begin{equation}
\vert \Delta_{*} K_{*} (z,z',t)\vert \leq C\
(i(z)i(z'))^{1/2} t^{-2} \exp{\left( -{\frac{\tilde{c} \ d_{*}^{2}(z,z')}{t}}\right)},\label{eq:estLaplK*}
\end{equation}
\noindent where the constants $c$, $C$ depend on $\tau$, the curvature, and the covariant derivatives of the curvature. Even more,
we can exchange the distances $d_g$ and $d_h$ in the exponentials on the right-hand side by adjusting the constant in the exponential.
In the same way, the heat kernel $K_{1,h}$ and its derivatives satisfy the same estimates as $K_{h}$ above.
\end{lemma}
For a surfaces with hyperbolic cusps, the estimates in the lemma above were established in \cite{Mu}.

\subsubsection{Heat kernels for other operators} \label{subsection:oheatkernels}
In this part we introduce the other heat operators that we
will use throughout this article.

For $a>1$ let $\Delta_{a,0}$ be the operator defined in Definition \ref{def:Laplmcuspas}.
The heat kernel $p_{a}(y,y',t)$ associated to
$\Delta_{a,0}$ can be computed explicitly, see
\cite[sec.14.2]{CarslawJaeger} or \cite[p.258]{Mu}. It is given by
\begin{equation}
p_{a}(y,y',t) = {\frac{e^{-t/4}}{\sqrt{4\pi t}}} \ (yy')^{1/2}
\left\{ e^{-(\log(y/y'))^{2}/4t} -
e^{-(\log(yy')-\log(a^{2}))^{2}/4t}\right\},\label{eq:psuba}\end{equation}
for $y, y'>a$. This is easy to verify by direct computation. Also
note that for $1\leq y\leq a$, $p_{a}(y,y',t)=0$.

The operator $e^{-t\Delta_{a,0}}$ acts on
$L^{2}([a,\infty),y^{-2}dy)$. However, we can regard it as an
operator acting on $L^{2}([1,\infty),y^{-2}dy)$ by con\-si\-de\-ring the corres\-pon\-ding inclusion
and restriction. Similarly, the operator $e^{-t\Delta_{1,0}}$ can be regarded as acting on
$L^{2}([a,\infty),y^{-2}dy)$.

Now, let us assume that $M$ can be decomposed as $M=M_{0}\cup Z$
with $Z=[1,\infty)\times S^{1}$. Then we can make the operator
$e^{-t\Delta_{a,0}}$ act on $L^{2}(M,dA_{g})$ in the following way:
$$e^{-t\Delta_{a,0}}f (z) = \int_{a}^{\infty}\int_{S^{1}} p_{a}(y,y',t)
\left.f\right\vert_{Z_{a}}(y',x')dx' {\frac{dy'}{y'^{2}}} \quad
\text{ for } z=(y,x)\in Z_{a},$$ and zero otherwise. From the
symmetry of $p_{a}(y,y',t)$, is clear that the operator
$e^{-t\Delta_{a,0}}$ acting on $L^{2}(M,dA_{g})$ is symmetric.

Recall the operator $\Delta_{Z,D}$ defined in Section
\ref{subsection:swc}. The kernel of the operator
$e^{-t\Delta_{Z,D}}$ is constructed by a classical method (see
\cite[chapter VII]{Chavel}) and it is given by:
\begin{equation} K_{Z,D} ((y,x),(y',x'),t)= K_{1}((y,x),(y',x'),t) +
p_{1,D}((y,x),(y',x'),t)\label{eq:dechkcdc}\end{equation} where $y,
y' \geq 1$, $x, x'\in S^{1}$, $t>0$, and $p_{1,D}((y,x),(y',x'),t)$
is a function that satisfies:
for every $\tau>0$ there exist constants $C, c >0$ such that:
\begin{equation}
\vert p_{1,D}(z,z',t)\vert \leq C t^{-1} (i(z)i(z'))^{1/2}
e^{-{\frac{c(d_{g}(z,\partial Z) + d_{g}(z',\partial Z))^{2}}{t}}}
\label{eq:esth}
\end{equation}
for all $z, z' \in Z$ and $0<t<\tau$.

Now let $\Delta_{Z,h}$ be the self-adjoint extension of the operator
$$-e^{-2\varphi}y^{2}(\partial_{y}^{2}+\partial_{x}^{2}):
C_{c}^{\infty}(Z)\to L^{2}(Z,dA_{h})$$ obtained after imposing
Dirichlet boundary conditions at $\{1\}\times S^1$. Let $K_{Z,h}$
denote the kernel of the operator $e^{-t\Delta_{Z,h}}$. As in the
case of the heat kernel associated to the operator $\Delta_{Z,D}$, given in (\ref{eq:dechkcdc}), the kernel $K_{Z,h}$ is given by:
\begin{equation}
K_{Z,h}(z,z',t) = K_{1,h}(z,z',t) + p_{h,D}(z,z',t),
\label{eq:dhkdecfcmhc}\end{equation}
for $z,z'\in Z$ and $t>0$ where
the term $p_{h,D}(z,z',t)$ is determined by the boundary condition.
In the same way as above, $p_{h,D}(z,z',t)$
satisfies, up to some constants, the same estimate as the one in equation (\ref{eq:esth}).

\subsubsection{Duhamel's Principle} \label{section:Duhamel's} There are
several ways to state and use Duhamel's principle, see for example
\cite[VII.3]{Chavel}.

Duhamel's principle can be applied in the non-compact setting under
certain assumptions on the decay of the functions. This is the case
of the heat kernels on surfaces with cusps and asymptotically cusp
ends. In terms of the operators, Duhamel's principle can be stated
as:
\begin{equation}  T^{-1}e^{-t\Delta_{h}}T - e^{-t\Delta_{g}} = \int_{0}^{t}
T^{-1}e^{-s\Delta_{h}}T (\Delta_{g}-
T^{-1}\Delta_{h}T)e^{-(t-s)\Delta_{g}} \ ds.
\label{eq:Duhprinchohg1}
\end{equation}

\section{Trace class property of relative heat operators}
\label{section:trace_class1}
In this section we prove Theorem \ref{theorem:trchocdacm}, which
says that the difference of the heat operators corresponding to the metrics $g$ and $h$ is
trace class. As we know, none of the heat operators
$e^{-t\Delta_{h}}$ nor $e^{-t\Delta_{g}}$ is trace class, which is
the reason why we consider their difference. This is the first step
to define the relative determinant of the pair $(\Delta_{h},\Delta_{g})$.\\

In the second part we consider other relative heat traces that are naturally associated to a surface with cusps.

\subsection{Trace class property} \label{subsection:tcp}
Let $(M,g)$, $M_{0}$, $Z$ as well as $\Delta_{g}$, $\Delta_{Z,D}$,
and $\Delta_{1}$ be as in Section \ref{section:defandnot}. For simplicity,
we assume that $M$ has only one cusp so it can be decomposed as $M=M_{0} \cup Z$ with $M_{0}$ compact
and $Z=[1, \infty)\times S^{1}$.

\begin{theorem}
Let $h=e^{2\varphi}g$, and assume that on the cusp $Z$ the functions
$\varphi(y,x)$, $\vert \nabla_{g} \varphi (y,x)\vert$ and
$\Delta_{g} \varphi(y,x)$ are $O(y^{-\alpha})$ with $\alpha>0$, as
$y\to \infty$. Let $T$ be the unitary map defined in equation (\ref{eq:defT}).
Then for any $t>0$ the operator
$$T^{-1}e^{-t\Delta_{h}}T - e^{-t\Delta_{g}}$$ is trace class.\label{theorem:trchocdacm}
\end{theorem}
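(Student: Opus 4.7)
My strategy is to reduce the statement to integrability in $s\in(0,t)$ of the trace norm of the integrand that arises from Duhamel's principle, and to obtain the required trace norm estimate by factoring the integrand into a product of two Hilbert--Schmidt operators adapted to the cusp decay of $\varphi$.

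First, I apply Duhamel's principle \eqref{eq:Duhprinchohg1} to write
\[
T^{-1}e^{-t\Delta_h}T - e^{-t\Delta_g} \;=\; \int_0^t T^{-1}e^{-s\Delta_h}T\cdot V \cdot e^{-(t-s)\Delta_g}\,ds,
\]
where $V:=\Delta_g - T^{-1}\Delta_h T$. Using the explicit conjugation formula \eqref{eq:transflaplaceop},
\[
V \;=\; (1-e^{-2\varphi})\Delta_g \;-\; 2e^{-2\varphi}\langle\nabla_g\varphi,\nabla_g\cdot\rangle_g \;+\; e^{-2\varphi}\bigl(\Delta_g\varphi+|\nabla_g\varphi|_g^2\bigr),
\]
which is a second order differential operator with bounded coefficients on $M$, all $O(y^{-\alpha})$ in the cusp, by hypothesis.

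Next, I fix $s\in(0,t)$ and factor the integrand as $A(s)\cdot B(t-s)$, where
\[
A(s) \;=\; T^{-1}e^{-s\Delta_h}T\cdot M_\rho, \qquad B(t-s) \;=\; M_{\rho^{-1}}\,V\,e^{-(t-s)\Delta_g},
\]
with $M_\rho$ multiplication by a smooth weight $\rho$ that equals $1$ on the compact part of $M$ and $y^{-\alpha/2}$ in the cusp. This choice distributes the cusp decay of $V$ evenly between the two factors. Using the estimates \eqref{eq:estK}, \eqref{eq:estKh}, \eqref{eq:estgrdK*} and \eqref{eq:estLaplK*} together with the standard Gaussian semigroup bound $\int_M\exp(-cd_g(z,w)^2/r)\,dA_g(w)\lesssim r$, I bound the Hilbert--Schmidt norm squared of $A(s)$ by an integrable power of $s$ times the cusp integral $\int_1^\infty y^{-1-\alpha}\,dy$, which converges precisely because $\alpha>0$; an analogous computation controls $B(t-s)$, after moving one copy of the second-order derivative in $V$ onto the adjacent heat factor by writing $V\,e^{-(t-s)\Delta_g} = (V e^{-(t-s)\Delta_g/2})\cdot e^{-(t-s)\Delta_g/2}$ and using $\|\Delta_g\,e^{-r\Delta_g/2}\|\lesssim r^{-1}$. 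Combining via $\|A(s)B(t-s)\|_1\leq \|A(s)\|_{HS}\|B(t-s)\|_{HS}$ yields a bound of the form $C_t\,s^{-a}(t-s)^{-b}$ with $a+b<1$, so that the integrand is trace class with integrable trace norm on $(0,t)$. The difference $T^{-1}e^{-t\Delta_h}T-e^{-t\Delta_g}$ is therefore trace class as claimed.

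The main technical obstacle lies in the second step: because $V$ contains a second order piece $(1-e^{-2\varphi})\Delta_g$, the naive Hilbert--Schmidt factorization would produce a non-integrable singularity $(t-s)^{-3/2}$ at $s=t$. The remedy is the redistribution of derivatives described above, combined with the tight Gaussian integral estimate, which together absorb one factor of $(t-s)^{-1}$ into the time dependence and leave an integrable singularity. The decay assumption $\alpha>0$ enters exactly once, in the convergence of the integral $\int_1^\infty y^{-1-\alpha}\,dy$, and is sharp for this kind of Hilbert--Schmidt argument; no stronger decay of $\varphi$ is needed for the trace class property (though Section \ref{section:asypexpant0} will require much stronger decay for the asymptotic expansion of the relative heat trace).
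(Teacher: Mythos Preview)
Your overall plan---Duhamel plus a Hilbert--Schmidt factorization adapted to the cusp---is the same as the paper's, and your identification of $V=\Delta_g-T^{-1}\Delta_hT$ via \eqref{eq:transflaplaceop} is correct. However, there is a genuine gap in the integrability claim.

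With your weight $\rho=y^{-\alpha/2}$ and the kernel bounds \eqref{eq:estKh}, \eqref{eq:estLaplK*}, one computes (exactly as in Lemma~\ref{lemma:convint})
\[
\|A(s)\|_{HS}\;\ll\; s^{-3/4},\qquad
\|B(t-s)\|_{HS}\;\ll\;(t-s)^{-7/4},
\]
the second exponent coming from the factor $(t-s)^{-4}$ in $|\Delta_g K_g|^2$ together with the $\sqrt{t-s}$ gained from the spatial Gaussian integral. Thus $a+b=5/2$, not $a+b<1$, and $s^{-3/4}(t-s)^{-7/4}$ is \emph{not} integrable near $s=t$. Your proposed remedy, writing $V e^{-(t-s)\Delta_g}=(Ve^{-(t-s)\Delta_g/2})\cdot e^{-(t-s)\Delta_g/2}$ and invoking $\|\Delta_g e^{-r\Delta_g/2}\|\lesssim r^{-1}$, does not help: on a surface with cusps the heat operator $e^{-r\Delta_g/2}$ is \emph{not} Hilbert--Schmidt (there is continuous spectrum $[\tfrac14,\infty)$), so this factorization cannot produce a finite HS norm for $B(t-s)$. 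Inserting a weight into the second factor, $M_\sigma e^{-r\Delta_g/2}$, makes it HS but then destroys the operator-norm bound on the first factor, since $M_{\sigma^{-1}}$ is unbounded.

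The paper resolves this by splitting the $s$-integral at $t/2$ and using, on each half, the contraction bound $\|e^{-s\Delta_h}\|\le 1$ (resp.\ $\|e^{-(t-s)\Delta_g}\|\le 1$) for the factor whose time argument approaches $0$; see \eqref{eq:b1gntsi1g1h}. This reduces the problem to Proposition~\ref{lemma:traceclassro}: the operators $(\Delta_g-T^{-1}\Delta_hT)e^{-r\Delta_g}$ and $e^{-r\Delta_h}(T\Delta_gT^{-1}-\Delta_h)$ are trace class with trace norm \emph{uniformly bounded} for $r$ in compact subsets of $(0,\infty)$. One then never needs integrability of a product of two singular powers; each integrand is simply bounded on its half-interval. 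Your weight-splitting idea is exactly what the paper uses in the proof of Proposition~\ref{lemma:traceclassro} (with the auxiliary $\phi$ of \eqref{eq:phiaux}), but only \emph{after} the $t/2$ split has removed the endpoint singularity.
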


To prove this statement we follow a procedure similar to that used
by W. M\"uller and G. Salomonsen in \cite{MuSa}. We use Duhamel's
principle which was stated in Section \ref{section:Duhamel's}.

Let $\Vert \cdot \Vert$ denote the operator norm and $\Vert \cdot
\Vert_{1,g}$, ($\Vert \cdot \Vert_{1,h}$, resp.), denote the trace
norm in $L^{2}(M,dA_{g})$, (in $L^{2}(M,dA_{h})$, resp.). From equation (\ref{eq:Duhprinchohg1}), we have:

\begin{multline} \Vert T^{-1}e^{-t\Delta_{h}}T - e^{-t\Delta_{g}}\Vert_{1,g} \\
\leq \int_{0}^{t/2} \Vert
(\Delta_{g}-T^{-1}\Delta_{h}T)e^{-(t-s)\Delta_{g}}\Vert_{1,g} \ ds \\ +
\int_{t/2}^{t} \Vert e^{-s\Delta_{h}}
(T\Delta_{g}T^{-1}-\Delta_{h})\Vert_{1,h} \ ds
\label{eq:b1gntsi1g1h}
\end{multline}

When considering the trace of the operator on the right-hand side of
(\ref{eq:Duhprinchohg1}) as an integral using heat kernels
and their estimates one has to take two aspects into account. One is
related with the time singularity at $t=0$ and the other one is
related with the convergence of the space integral. The idea of
breaking up the integral in equation (\ref{eq:b1gntsi1g1h}) comes
from the need to avoid the time singularities coming from the heat
kernel $K_{h}(z,z',s)$ ($K_{g}(z,z',t-s)$) close to $s=0$ ($t-s=t$)
that do not integrate to something finite in a neighborhood of zero
(of $t$). Equation (\ref{eq:b1gntsi1g1h}) reduces the proof of Theorem \ref{theorem:trchocdacm} to
the following Proposition:
\begin{proposition} Let $0< a < b <\infty$, under the same conditions of Theorem
\ref{theorem:trchocdacm} we have that for each $t\in [a,b]$, the
operators
$$(\Delta_{g}-T^{-1}\Delta_{h}T)e^{-t\Delta_{g}} \quad \text{ and } \quad \ e^{-t\Delta_{h}}
(T\Delta_{g}T^{-1}-\Delta_{h})$$ are trace class and each trace norm
is uniformly bounded on $[a,b]$. \label{lemma:traceclassro}
\end{proposition}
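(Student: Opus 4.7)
The plan is to use formula (\ref{eq:transflaplaceop}) to write $B := \Delta_g - T^{-1}\Delta_h T$ as a second-order differential operator with decaying coefficients, and then reduce the trace-class claim to a combination of Hilbert--Schmidt estimates from the heat kernel bounds of Section \ref{subsection:heatkernels&estimates} and the spectral decomposition of $\Delta_g$ recalled in Section \ref{section:stswc}. Explicitly, (\ref{eq:transflaplaceop}) gives
\[
B = (1-e^{-2\varphi})\Delta_g - 2e^{-2\varphi}\langle\nabla_g\varphi,\nabla_g\,\cdot\,\rangle_g + e^{-2\varphi}\bigl(\Delta_g\varphi+|\nabla_g\varphi|_g^2\bigr),
\]
whose coefficients are bounded on $M$ and $O(y^{-\alpha})$ on $Z$ under the hypotheses.

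Using the semigroup identity $e^{-t\Delta_g}=e^{-t\Delta_g/2}\cdot e^{-t\Delta_g/2}$, the main intermediate step is to show that $Be^{-t\Delta_g/2}$ is Hilbert--Schmidt on $L^2(M,dA_g)$ with norm bounded uniformly on $t\in[a/2,b]$. Treating it as an integral operator whose kernel is obtained by applying $B$ in the first variable of $K_g(z,z',t/2)$, and combining (\ref{eq:estK}), (\ref{eq:estgrdK*}), (\ref{eq:estLaplK*}) with the identity $\int K_g(z,z',s)^2\,dA_g(z')=K_g(z,z,2s)\leq C\,i(z)/s$, a direct calculation yields
\[
\|Be^{-t\Delta_g/2}\|_2^2 \;\leq\; C_t\int_M|c(z)|^2\, i(z)\,dA_g(z),
\]
where $c$ denotes any of the coefficients of $B$. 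Since each $c$ is bounded on $M_0$ and $O(y^{-\alpha})$ on $Z$, the integral on the cusp reduces to $\int_1^\infty y^{-2\alpha}\cdot y^{-1}\,dy<\infty$ for any $\alpha>0$, uniformly on compact subsets of $(0,\infty)$.

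The main obstacle is upgrading Hilbert--Schmidt to trace class, because $e^{-t\Delta_g/2}$ is not itself Hilbert--Schmidt due to the continuous spectrum $[\tfrac14,\infty)$. To resolve this I would decompose $e^{-t\Delta_g/2}$ into its pure-point and absolutely continuous parts. The pure-point part is trace class with trace norm $\sum_n e^{-t\lambda_n/2}<\infty$ by the Weyl-type bound from Section \ref{section:stswc}, so its composition with the Hilbert--Schmidt operator $Be^{-t\Delta_g/2}$ is trace class. The absolutely continuous part is treated via the Eisenstein representation from Section \ref{section:stswc}; since $E_j(\cdot,\tfrac12+ir)=O(y^{1/2})$ on the cusp, the decay of $B$'s coefficients forces $BE_j(\cdot,\tfrac12+ir)\in L^2(M)$ with $\|BE_j(\cdot,\tfrac12+ir)\|_{L^2}\leq C(1+r^2)$, and combined with the Gaussian weight $e^{-tr^2}$ coming from the spectral parameter this yields the trace-norm bound for the absolutely continuous contribution.

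The second operator $e^{-t\Delta_h}(T\Delta_g T^{-1}-\Delta_h)$ is treated symmetrically, with the roles of $g$ and $h$ interchanged and using the heat kernel estimate (\ref{eq:estKh}) for $\Delta_h$ in place of (\ref{eq:estK}); the decay assumptions on $\varphi$ and its derivatives give the corresponding Hilbert--Schmidt and spectral estimates on $L^2(M,dA_h)$. Uniformity of both trace norms on $t\in[a,b]$ is inherited from the continuous dependence on $t>0$ of all the constants above and the compactness of $[a,b]\subset(0,\infty)$.
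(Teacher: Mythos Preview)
Your Hilbert--Schmidt estimate for $Be^{-t\Delta_g/2}$ is fine, and the pure-point part of the spectral decomposition is handled correctly via the Weyl-type bound. The gap is in the absolutely continuous part. The Eisenstein functions $E_j(\cdot,\tfrac12+ir)$ are \emph{not} in $L^2(M)$, so the map $f\mapsto (BE_j)\langle f,E_j\rangle$ is not a bounded rank-one operator, and there is no ``continuous Schmidt decomposition'' that would let you bound $\|Be^{-t\Delta_g/2}\cdot e^{-t\Delta_g/2}P_{ac}\|_1$ by an integral like $\int e^{-tr^2}\|BE_j\|_{L^2}\,dr$. Knowing that $BE_j\in L^2$ with polynomial control in $r$ gives at best another Hilbert--Schmidt bound (via Plancherel for the continuous spectrum), which you already had; it does not upgrade the product to trace class. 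Since $e^{-t\Delta_g/2}P_{ac}$ is not even compact, composing a Hilbert--Schmidt operator with it yields nothing better than Hilbert--Schmidt.

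The paper avoids the spectral decomposition entirely. It introduces an auxiliary weight $\phi\in C^\infty(M)$, $\phi>0$, with $\phi(y,x)=y^{-\beta}$ on $Z$ for $\beta=\min(\alpha/2,1/2)$, and factors
\[
Be^{-t\Delta_g}=\bigl(Be^{-(t/2)\Delta_g}M_\phi^{-1}\bigr)\circ\bigl(M_\phi e^{-(t/2)\Delta_g}\bigr).
\]
Both factors are then shown to be Hilbert--Schmidt by direct kernel estimates of the type you already carried out: the weight $\phi$ supplies just enough spatial decay to make $M_\phi e^{-(t/2)\Delta_g}$ Hilbert--Schmidt (the borderline integral $\int_1^\infty y^{-1-2\beta}y'^{-1}e^{-c\log(y/y')^2/t}\,dy\,dy'$ converges), while the decay $y^{-\alpha}$ of $B$'s coefficients still beats the growth $y^{\beta}$ coming from $M_\phi^{-1}$, so $Be^{-(t/2)\Delta_g}M_\phi^{-1}$ remains Hilbert--Schmidt. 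This is the missing idea: split the available decay $y^{-\alpha}$ between the two factors rather than putting it all on $B$ and hoping to extract trace class from the bare semigroup.
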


\begin{proof}
The proof follows in several steps. The idea is to decompose each
operator as the product of two Hilbert-Schmidt (HS)\footnote{We use HS to abbreviate Hilbert-Schmidt.} operators whose
norms are uniformly bounded on $t$ at the corresponding interval.
To prove the HS property we use that if $R$ is an integral operator on $M$
with kernel $r$, its HS norm is given by the $L^{2}(M\times M)$-norm of $r$.
Let $\alpha$ be as in the statement of Theorem \ref{theorem:trchocdacm}, i.e. $\alpha$ denotes the decay of the conformal factor $\varphi$.
Let $\beta=\alpha/2$, if $\alpha\in (0,1)$, and $\beta=1/2$ if
$\alpha\geq 1$; so that $0<\beta\leq 1/2$. Let us define an
auxiliary function $\phi$ that we will use repeatedly, such that
$\phi \in C^{\infty}(M)$ satisfies $\phi
> 0$ and
\begin{equation}\phi(y,x)=y^{-\beta}, \ \ (y,x)\in Z.\label{eq:phiaux}\end{equation} Let $M_{\phi}$ and
$M_{\phi}^{-1}$ denote the operators multiplication by $\phi$ and
$\phi^{-1}$, respectively. The motivation to introduce the function $\phi$ is the fact that the heat operator
$e^{-t\Delta_{g}}$ itself is not HS but when multiplied by $\phi$ it becomes HS. The proof is given below.

{\bf Step 1.} To proof the trace class property of
$(\Delta_{g}-T^{-1}\Delta_{h}T)e^{-t\Delta_{g}}$,
we write
$$(\Delta_{g}-T^{-1}\Delta_{h}T)e^{-t\Delta_{g}} =
((\Delta_{g}-T^{-1}\Delta_{h}T)e^{-(t/2)\Delta_{g}}M_{\phi}^{-1})\circ
(M_{\phi} e^{-(t/2)\Delta_{g}}),$$ and prove that for every $t
> 0$, $(\Delta_{g}-T^{-1}\Delta_{h}T)e^{-t\Delta_{g}}M_{\phi}^{-1}$
and $M_{\phi} e^{-t\Delta_{g}}$ are HS operators.

{\bf Step 1.1}.
$(\Delta_{g}-T^{-1}\Delta_{h}T)e^{-t\Delta_{g}}M_{\phi}^{-1}$ is HS.
 Equation (\ref{eq:transflaplaceop}) implies:
\begin{multline*}(\Delta_{g}-T^{-1}\Delta_{h}T)e^{-t\Delta_{g}}M_{\phi}^{-1} =
((1-e^{-2\varphi(z)})\Delta_{g}) e^{-t\Delta_{g}} M_{\phi}^{-1}\\ +
e^{-2\varphi} (-2\langle \nabla_{g} \varphi , \nabla_{g} \ \cdot \
\rangle_{g} + (\Delta_{g} \varphi + \vert \nabla_{g} \varphi
\vert_{g}^{2})) e^{-t\Delta_{g}}M_{\phi}^{-1}.\end{multline*}

Let us start with the term $((1-e^{-2\varphi(z)})\Delta_{g})
e^{-t\Delta_{g}}M_{\phi}^{-1}$; to prove that it is HS, we
just need to prove that the following integral is finite:
$$\int_{M} \int_{M} \vert(1-e^{-2\varphi(z)})\Delta_{g,z} K_{g}(z,z',t) \phi(z')^{-1}\vert^{2} dA_{g}(z) dA_{g}(z').$$
Let us use the decomposition of $M$ as  $M=M_{0}\cup Z$ to split the
integral as:
\begin{multline}\int_{M} \int_{M} \ \cdots\ \ dA_{g}(z) dA_{g}(z') = \int_{M_{0}} \int_{M_{0}} \ \cdots\ \ dA_{g}(z) dA_{g}(z')
\\+ \int_{M_{0}} \int_{Z} \ \cdots\ \ dA_{g}(z) dA_{g}(z')
+ \int_{Z} \int_{M_{0}} \ \cdots\ \ dA_{g}(z) dA_{g}(z')\\ +\int_{Z}
\int_{Z} \ \cdots\ \ dA_{g}(z) dA_{g}(z').
\label{eq:decomposeintegral1}\end{multline}

Now, we use the estimates of the derivatives of heat kernel
$K_{g}(z,z',t)$ given in (\ref{eq:estLaplK*}), the fact that
$1-e^{-2\varphi(z)}$ decays as $y^{-\alpha}$ at infinity, and the
definition of the function $i(z)$ given in (\ref{eq:defi}). To
estimate the resulting integrals we use the equations in Observation \ref{ss:convint}.
For simplicity let us just write $c$ instead of $2c$ for
the constant in the exponential factor of the estimates of the heat
kernels.

For the first term in the sum in equation
(\ref{eq:decomposeintegral1}) which involves $z\in M_{0}$  and
$z'\in M_{0}$ we have:
\begin{multline*}\int_{M_0} \int_{M_0} \vert(1-e^{-2\varphi(z)})\Delta_{g,z} K_{g}(z,z',t) \phi(z')^{-1}\vert^{2} dA_{g}(z) dA_{g}(z')\\
\ll \int_{M_0} \int_{M_0} t^{-4} e^{-\frac{c}{t} d_{g}^{2}(z,z')}\
dA_{g}(z)\ dA_{g}(z')\ll t^{-4}.\end{multline*}

For the second term in the sum in (\ref{eq:decomposeintegral1}) which involves $z'\in M_0$ and $z\in
Z$ we have:
\begin{multline*}
\int_{M_{0}} \int_{Z} \vert(1-e^{-2\varphi(z)})\Delta_{g,z}
K_{g}(z,z',t) \phi(z')^{-1}\vert^{2} dA_{g}(z) dA_{g}(z')\\ \ll
t^{-4} \int_{M_0} \int_{S^1}\int_{1}^{\infty}
{\frac{1}{y^{1+2\alpha}}} \ e^{-\frac{c}{t} d_{g}^{2}((y,x),z')} \
dy \ dx \ dA_{g}(z') \ll t^{-4}.
\end{multline*}

The third term in the sum in equation (\ref{eq:decomposeintegral1})
involves variables $z\in M_0$ and $z'\in Z$. In this case we use
that the Riemannian distance satisfies $d_{g}(z,z')\geq
d_{g}(\partial Z,z') \geq \vert\log(y')\vert$ from which we infer:
\begin{multline*}
\int_{Z} \int_{M_0} \vert(1-e^{-2\varphi(z)})\Delta_{g,z}
K_{g}(z,z',t) \phi(z')^{-1}\vert^{2} dA_{g}(z) dA_{g}(z')\\  \ll
\int_{1}^{\infty} \int_{S^{1}} \int_{M_0} y'^{1+2\beta} t^{-4}
e^{-\frac{c}{t} d_{g}^{2}(z,(y',x'))} \ dA_{g}(z) \ dx'
{\frac{dy'}{y'^{2}}}\\  \ll t^{-4} \int_{1}^{\infty} e^{-\frac{c}{t}
(\log(y'))^{2}}\ dy'=t^{-4} \int_{0}^{\infty}
e^{-\frac{c}{t}u^{2}}e^{u} \ du \ll t^{-7/2} e^{t/c'}.
\end{multline*}

Finally, the last term in the sum in (\ref{eq:decomposeintegral1}) in which the variables $z, z'$ lie in
$Z$ we have:
\begin{multline*}
\int_{Z} \int_{Z} \vert(1-e^{-2\varphi(z)})\Delta_{g,z}
K_{g}(z,z',t) \phi(z')^{-1}\vert^{2} dA_{g}(z) dA_{g}(z')\\ \ll
t^{-4} \int_{1}^{\infty} \int_{1}^{\infty}
y^{-1-2\alpha}y'^{-1+2\beta} e^{-\frac{c}{t} (\log(y/y'))^{2}}\ dy \
dy' \ll t^{-7/2} e^{{\frac{t}{c}}},
\end{multline*}
since $\alpha>\beta$. Thus we obtain:
$$\Vert (1-e^{-2\varphi})\Delta_{g} e^{-t\Delta_{g}}M_{\phi}^{-1}\Vert_{2}^{2} \ll
t^{-4}\left(1 + t^{1/2}e^{t/c}\right).$$

We proceed now with the operators $e^{-2\varphi}\langle \nabla_{g}
\varphi , \nabla_{g} \ \cdot \
\rangle_{g}e^{-t\Delta_{g}}M_{\phi}^{-1}$ and
$e^{-2\varphi}(\Delta_{g} \varphi + \vert \nabla_{g} \varphi
\vert_{g}^{2})) e^{-t\Delta_{g}}M_{\phi}^{-1}$. Their integral
kernels are given by
\begin{eqnarray*}
e^{-2\varphi(z)}\langle \nabla_{g,z} \varphi(z) , \nabla_{g,z}
K_{g}(z,z',t)\rangle_{g} \phi^{-1}(z'), \quad \text{ and }\\
e^{-2\varphi(z)}(\Delta_{g} \varphi(z) + \vert \nabla_{g,z}
\varphi(z) \vert_{g}^{2})K_{g}(z,z',t)\phi^{-1}(z'), \end{eqnarray*}
respectively. For which we have respectively the following
estimates:
\begin{multline*}
\vert e^{-2\varphi(z)}\langle \nabla_{g,z} \varphi(z) , \nabla_{g,z}
K_{g}(z,z',t)\rangle_{g} \phi^{-1}(z')\vert^{2}\\ \ll t^{-3} i(z)
i(z') \vert \nabla_{g} \varphi(z)\vert^{2} e^{-\frac{c}{t}
d_{g}^{2}(z,z')}\phi^{-1}(z')^{2}, \quad \text{and}\end{multline*}
\begin{multline*}
\vert e^{-2\varphi(z)}(\Delta_{g} \varphi(z) + \vert \nabla_{g,z}
\varphi(z) \vert_{g}^{2} K_{g}(z,z',t)\phi^{-1}(z')\vert^{2}\\ \ll
t^{-2} (\vert\Delta_{g} \varphi(z)\vert + \vert \nabla_{g}
\varphi(z) \vert_{g}^{2})^{2} i(z) i(z') e^{-\frac{c}{t}
d_{g}^{2}(z,z')}\phi^{-1}(z')^{2}.
\end{multline*}

We split the integrals on $M\times M$ in the same way as in equation
(\ref{eq:decomposeintegral1}), and the integrals obtained are very
similar to those carried out in the previous part for the operator
$(1-e^{-2\varphi})\Delta_{g} e^{-t\Delta_{g}}$. The main difference
occurs in the power of $t$.

For the operator $e^{-2\varphi}\langle \nabla_{g} \varphi ,
\nabla_{g} \ \cdot \ \rangle_{g}e^{-t\Delta_{g}}M_{\phi}^{-1}$ we
use the estimates in (\ref{eq:estgrdK*}) and the decay of
the function $\vert \varphi\vert$ at infinity.

Now, for the operator $e^{-2\varphi}(\Delta_{g} \varphi + \vert
\nabla_{g} \varphi \vert_{g}^{2})) e^{-t\Delta_{g}}M_{\phi}^{-1}$ we
use the estimate of the heat kernel given in equation
(\ref{eq:estK}) and the decay of the functions involving $\varphi$.
Let us only show the integral on $Z\times Z$. For $z\in Z$ we have
$(\Delta_{g} \varphi(z) + \vert \nabla_{g,z} \varphi(z)
\vert_{g}^{2})^{2} \ll (y^{-\alpha} + y^{-2\alpha})^{2} \ll
y^{-2\alpha}$. Then
\begin{align*}
&\int_{Z}\int_{Z} \vert e^{-2\varphi(z)}(\Delta_{g} \varphi(z) +
\vert \nabla_{g,z} \varphi(z) \vert_{g}^{2})
K_{g}(z,z',t)\phi^{-1}(z')\vert^{2} dA_{g}(z) dA_{g}(z')\\
& \quad \ll t^{-2} \int_{1}^{\infty} \int_{1}^{\infty}
y^{-1-2\alpha}y'^{-1+2\beta} e^{-{\frac{c}{t}}(\log(y/y'))^{2}} dy
dy' \ll t^{-3/2} e^{{\frac{t}{c}}}.
\end{align*}
Thus in the same way as above we obtain:
\begin{align*}\Vert
e^{-2\varphi}\langle \nabla_{g} \varphi , \nabla_{g} \ \cdot \
\rangle_{g}e^{-t\Delta_{g}}M_{\phi}^{-1} \Vert_{2}^{2} \ll
t^{-3}\left(1 + t^{1/2}e^{t/c}\right), \text{and}\\
\Vert e^{-2\varphi} (\Delta_{g} \varphi + \vert \nabla_{g} \varphi
\vert_{g}^{2}) e^{-t\Delta_{g}}M_{\phi}^{-1} \Vert_{2}^{2} \ll
t^{-2}\left(1 + t^{1/2}e^{t/c}\right).
\end{align*}

{\bf Step 1.2.} The operator $M_{\phi} e^{-t\Delta_{g}}$ is HS. To
see this, we have to prove that the following integral is finite:
$$\int_{M} \int_{M} \vert \phi(z) K_{g}(z,z',t) \vert^{2} dA_{g}(z) dA_{g}(z').$$

We decompose the integral as in equation
(\ref{eq:decomposeintegral1}), and proceed in the same way as above,
using in this case the estimates of $K_{g}(z,z',t)$ given in
(\ref{eq:estK}) and the definition of the functions $\phi$ and
$i(z)$. Again, for the sake of simplicity we just write $c$ instead
of $2c$ in the exponential factor of the heat estimates. The
computations are very similar to those in the previous case.

The integrals over $M_{0}\times M_{0}$, $M_0\times Z$, and $Z\times
M_{0}$ do not have any problem. As for the last term, whose variables $z, z'$ lie in $Z$,
we have:
\begin{multline}
\int_{Z} \int_{Z} \vert \phi(z) K_{g}(z,z',t)\vert^{2} dA_{g}(z')
dA_{g}(z)\\ \ll \int_{1}^{\infty} \int_{1}^{\infty} \ y^{1-2\beta} \
y' t^{-2} e^{\frac{-c}{t} (\log(y/y'))^{2}} {\frac{dy'}{y'^{2}}}
{\frac{dy}{y^{2}}}\\ =  t^{-2} \int_{1}^{\infty} \int_{1}^{\infty}
y^{-1-2\beta} y'^{-1} e^{\frac{-c}{t} (\log(y/y'))^{2}}\
dy \ dy'\leq t^{-3/2}e^{c't}. \label{eq:mphkgcaux}
\end{multline}

Therefore
$$\Vert M_{\phi} e^{-t\Delta_{g}} \Vert_{2}^{2}
\ll t^{-2}+ t^{-3/2}e^{t/4c}.$$

In this way we have that
$(\Delta_{g}-T^{-1}\Delta_{h}T)e^{-t\Delta_{g}}$ is a trace class
operator and the trace norm satisfies:
\begin{multline*}
\Vert(\Delta_{g}-T^{-1}\Delta_{h}T)e^{-t\Delta_{g}}\Vert_{1,g}\\
\leq \Vert (\Delta_{g}-T^{-1}\Delta_{h}T) e^{-(t/2)\Delta_{g}}
M_{\phi}^{-1}\Vert_{2} \cdot \Vert M_{\phi}
e^{-(t/2)\Delta_{g}}\Vert_{2}\\ \ll (t^{-2} + t^{-3} + t^{-4})^{1/2}
\left(1 + t^{1/2}e^{t/c}\right)^{1/2} \left(t^{-2} +
t^{-3/2}e^{t/c'}\right)^{1/2};\end{multline*} the last expression is
integrable for $t$ in compact subsets of $(0,\infty)$.

{\bf Step 2.} In this step we prove that the operator
$e^{-t\Delta_{h}} (T\Delta_{g}T^{-1}-\Delta_{h})$ is trace class.
The proof is very similar to the proof for
$(\Delta_{g}-T^{-1}\Delta_{h}T)e^{-t\Delta_{g}}$ since the heat
kernels satisfy the same estimates, and the metrics are
quasi-isometric. Let us write:
$$e^{-t\Delta_{h}}
(T\Delta_{g}T^{-1}-\Delta_{h}) = (e^{-(t/2)\Delta_{h}}M_{\phi})
\circ (M_{\phi}^{-1}
e^{-(t/2)\Delta_{h}}(T\Delta_{g}T^{-1}-\Delta_{h})),$$ where $\phi
\in C^{\infty}(M)$ is as above. Then we have to prove that for every
$t > 0$, the kernels of the operators $e^{-t \Delta_{h}}M_{\phi}$ and
$M_{\phi}^{-1} e^{-t \Delta_{h}}(T\Delta_{g}T^{-1}-\Delta_{h})$ are square integrable.\\

The operator $M_{\phi}^{-1} e^{-t \Delta_{h}}
(T\Delta_{g}T^{-1}-\Delta_{h})$ is HS. First of all let us consider
the kernel of the operator $e^{-t \Delta_{h}}
(T\Delta_{g}T^{-1}-\Delta_{h})$. For $f\in C^{\infty}_{c}(M)$
we have that:
\begin{multline*}
(e^{-t \Delta_{h}} (T\Delta_{g}T^{-1}-\Delta_{h}) f)(z)\\ =
\int_{M} K_{h}(z,z',t) \cdot (T\Delta_{g,z'}T^{-1}-\Delta_{h,z'})f(z') dA_{h}(z')\\
=\int_{M} ((T\Delta_{g,z'}T^{-1} - \Delta_{h,z'}) K_{h}(z,z',t))
\cdot f(z') dA_{h}(z'),
\end{multline*}
since the operators $T\Delta_{g,z'}T^{-1}$ and $\Delta_{h}$ are
symmetric on $L^{2}(M,dA_{h})$. Now, let us use the equation
\begin{multline}
T\Delta_{g}T^{-1} - \Delta_{h}\\ = (e^{2\varphi}-1)\Delta_{h} -
2e^{2\varphi}\langle \nabla_{h} \varphi, \nabla_{h}\ \cdot \
\rangle_{h} + (\Delta_{g} \varphi - \vert \nabla_{g} \varphi
\vert_{g}^{2})\label{eq:difLaphtLapg}
\end{multline}
to write
\begin{multline*} M_{\phi}^{-1} (T\Delta_{g}T^{-1}-\Delta_{h}) e^{-t
\Delta_{h}} = M_{\phi}^{-1} e^{-t\Delta_{h}}
\{(e^{2\varphi}-1)\Delta_{h}\\ - 2e^{2\varphi}\langle \nabla_{h}
\varphi, \nabla_{h}\ \cdot \ \rangle_{h} + (\Delta_{g} \varphi -
\vert \nabla_{g} \varphi \vert_{g}^{2})\}. \end{multline*} It
follows that $M_{\phi}^{-1} e^{-t \Delta_{h}}
(T\Delta_{g}T^{-1}-\Delta_{h})$ is HS if the following functions
\begin{enumerate}\item
$\phi(z)^{-1}(e^{2\varphi}(z')-1)\Delta_{h,z'}K_{h}(z,z',t)$, \item
$\phi(z)^{-1} e^{2\varphi(z')}\langle \nabla_{h,z'} \varphi,
\nabla_{h,z'}K_{h} \rangle_{h}$ and \item $\phi(z)^{-1}(\Delta_{g}
\varphi(z') - \vert \nabla_{g,z'} \varphi \vert_{g}^{2})
K_{h}(z,z',t)$
\end{enumerate}
are in $L^{2}(M\times M, dA_{h}dA_{h})$.

We split again the integral in the same way as in equation
(\ref{eq:decomposeintegral1}) and use the estimates of the heat
kernel $K_{h}(z,z',t)$ and its derivatives given in equations
(\ref{eq:estKh}), (\ref{eq:estgrdK*}) and (\ref{eq:estLaplK*}). We
also use that for any function $f\in L^{1}(M,dA_{h})$ we have:
$$\int_{M} \vert f \vert  dA_{h}\ll \int_{M} \vert f\vert dA_{g}.$$

For the first function listed above, the integrals are almost the
same as the ones corresponding to the operator
$(1-e^{-2\varphi})\Delta_{g} e^{-t\Delta_{g}} M_{\phi}^{-1}$. Then,

$$\int_{M} \int_{M} \vert \phi(z)^{-1} (e^{2\varphi(z')}-1) \Delta_{h,z'} K_{h}(z,z',t) \vert^{2}
dA_{h}(z) dA_{h}(z') \ll t^{-4} + t^{-7/2}e^{t/c}$$ for some
constant $c>0$.

Similarly for the other two functions we get bounds by $t^{-3}(1 +
t^{1/2}e^{t/c})$ and $t^{-2}(1 + t^{1/2}e^{t/c})$, respectively.
Combining these estimates we obtain:
$$\Vert M_{\phi}^{-1} e^{-t \Delta_{h}}  (T\Delta_{g}T^{-1}-\Delta_{h})\Vert_{2}^{2}
\ll (t^{-4} + t^{-3} + t^{-2})(1 + t^{1/2}e^{t/c}).$$

In the same way as in Step 1.2 we can prove that
$e^{-t \Delta_{h}}M_{\phi}$ is HS with HS norm satisfying:
$$\Vert e^{-t \Delta_{h}}M_{\phi} \Vert_{2}^{2} \ll t^{-2}(1+t^{1/2}e^{\frac{t}{c}}).$$

Finally, for the operator $e^{-t\Delta_{h}}
(T\Delta_{g}T^{-1}-\Delta_{h})$ we obtain:
\begin{align*}\Vert e^{-t\Delta_{h}}
(T\Delta_{g}T^{-1}-\Delta_{h})\Vert_{1,h} &\leq \Vert
e^{-(t/2)\Delta_{h}}M_{\phi}\Vert_{2} \cdot \Vert
M_{\phi}^{-1} e^{-(t/2)\Delta_{h}}(T\Delta_{g}T^{-1}-\Delta_{h})\Vert_{2}\\
&\ll t^{-1} (t^{-4} + t^{-3} + t^{-2})^{1/2} \left(1 +
t^{1/2} e^{t/c} \right)\end{align*} This expression is clearly
integrable for $t$ on compact subsets of $(0,\infty)$.

This finishes the proofs of Proposition \ref{lemma:traceclassro} and
Theorem \ref{theorem:trchocdacm}.
\end{proof}

\begin{corollary}
Let $\psi$ satisfy the same conditions as $\varphi$ in the statement of Theorem \ref{theorem:trchocdacm}.
Then, for any $t>0$ the operator $\psi e^{-t\Delta_{h}}$ is trace class.
\label{lemma:tcppsihoh}
\end{corollary}
\begin{proof}
To proof this Lemma we follow the same method as above.
Namely, we use the semigroup property of $e^{-t\Delta_{h}}$ to decompose the operator $\psi
e^{-t\Delta_{h}}$ as
$$\psi e^{-t\Delta_{h}} = \psi e^{-(t/2)\Delta_{h}} M_{\phi^{-1}} M_{\phi} e^{-(t/2)\Delta_{h}},$$
where $\phi$ is the function given by equation (\ref{eq:phiaux}) and $M_{\phi}$ denotes
the multiplication operator by $\phi$. We already proved that the operators $\psi
e^{-t/2\Delta_{h}} M_{\phi^{-1}}$ and $M_{\phi} e^{-t/2\Delta_{h}}$
are HS.
\end{proof}

\subsection{Relative trace for other heat operators} \label{section:2opeinthecusp}
In this section, we consider relative heat traces of some operators naturally associated to the surface with cusps.
\begin{proposition} The operator $e^{-t\Delta_{g}} - e^{-t \Delta_{Z,D}}$ is trace class for all $t >
0$, where $e^{-t \Delta_{Z,D}}$ is considered as acting on
$L^{2}(M,dA_{g})$. \label{prop:trcphomhoc}
\end{proposition}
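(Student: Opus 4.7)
The strategy is to adapt the method used in the proofs of Theorem \ref{theorem:trchocdacm} and Proposition \ref{lemma:traceclassro}: decompose the operator difference using a partition of $M = M_{0}\cup Z$ and express each summand as a composition of two Hilbert--Schmidt operators, using the heat-kernel bounds of Section \ref{subsection:heatkernels&estimates} and the integrals in Lemma \ref{lemma:convint}. Let $\chi_{M_{0}}$ and $\chi_{Z}$ denote the operators of multiplication by the characteristic functions of $M_{0}$ and $Z$. Since $e^{-t\Delta_{Z,D}}$ is supported on $Z\times Z$, we have $e^{-t\Delta_{Z,D}} = \chi_{Z} e^{-t\Delta_{Z,D}} \chi_{Z}$, so
\[
e^{-t\Delta_{g}} - e^{-t\Delta_{Z,D}} = \chi_{M_{0}} e^{-t\Delta_{g}} + \chi_{Z} e^{-t\Delta_{g}} \chi_{M_{0}} + \chi_{Z}\bigl(e^{-t\Delta_{g}} - e^{-t\Delta_{Z,D}}\bigr)\chi_{Z}.
\]

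For the two terms involving $\chi_{M_{0}}$, the plan is to insert an auxiliary weight $\phi\in C^{\infty}(M)$ with $\phi(y,x) = y^{\gamma}$ on $Z$ for some small $\gamma > 0$, and to factor, for example,
\[
\chi_{M_{0}} e^{-t\Delta_{g}} = \bigl(\chi_{M_{0}} e^{-(t/2)\Delta_{g}} M_{\phi}\bigr)\circ\bigl(M_{\phi}^{-1} e^{-(t/2)\Delta_{g}}\bigr).
\]
Using the estimate (\ref{eq:estK}), the cusp bound $d_{g}(z,z')\geq \vert\log(y/y')\vert$, and Lemma \ref{lemma:convint}, each factor is Hilbert--Schmidt: the compact support of $\chi_{M_{0}}$ provides the decay required on one side, while the weight $M_{\phi}^{-1}$ supplies the decay needed on the cusp so that the hypothesis $k+\ell > 2$ of (\ref{eq:integral1}) is satisfied. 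The term $\chi_{Z} e^{-t\Delta_{g}} \chi_{M_{0}}$ is handled symmetrically, and the product of two Hilbert--Schmidt operators is trace class.

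The main obstacle is the third term $\chi_{Z}\bigl(e^{-t\Delta_{g}} - e^{-t\Delta_{Z,D}}\bigr)\chi_{Z}$: on $Z\times Z$ the diagonal integrals of $K_{g}$ and $K_{Z,D}$ individually diverge, so the trace class property depends on cancellation between the two kernels. Here I would use the decomposition $K_{Z,D} = K_{1} + p_{1,D}$ of (\ref{eq:dechkcdc}) together with a parallel comparison between $K_{g}$ and the complete-cusp kernel $K_{1}$. Both satisfy the heat equation on the interior of $Z$ for the hyperbolic metric, and a parametrix argument on $\widetilde{Z}$ (as in the construction of $K_{g}$ in \cite{Mu}) should yield, for $K_{g} - K_{1}$ on $Z\times Z$, a boundary-exponential bound of the same form as (\ref{eq:esth}),
\[
\vert K_{g}(z,z',t) - K_{1}(z,z',t)\vert \ll (i(z) i(z'))^{1/2}\, t^{-1}\, \exp\bigl(-c(d_{g}(z,\partial Z) + d_{g}(z',\partial Z))^{2}/t\bigr).
\]
Once this estimate is in hand, the integral operator with kernel $(K_{g} - K_{1}) - p_{1,D}$ is shown to be trace class by the same Hilbert--Schmidt factorization as above: the boundary-exponential weight now plays the role previously played by the auxiliary $\phi$, and the resulting double integrals on the cusp are covered by the inequalities of Lemma \ref{lemma:convint}. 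The crux of the proof is therefore the derivation of the boundary-type decay of $K_{g} - K_{1}$ on $Z$, which I expect to be the most delicate technical step.
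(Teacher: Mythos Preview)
Your approach is sound in outline but takes a substantially longer route than the paper. The paper does not prove this proposition from scratch: it observes that it is an immediate corollary of Proposition~6.4 in \cite{Mu}, which already establishes that $e^{-t\Delta_{g}} - e^{-t\Delta_{1,0}}P_{0}$ is trace class (here $P_{0}$ is the orthogonal projection onto the zero Fourier mode in the cusp). One then uses the spectral decomposition $\Delta_{Z,D} = \Delta_{1,0}\oplus \Delta_{Z,1}$ recalled in Section~\ref{subsection:swc}, together with the fact that $e^{-t\Delta_{Z,1}}$ is itself trace class (since $\Delta_{Z,1}$ has compact resolvent with Weyl-type eigenvalue growth). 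Hence
\[
e^{-t\Delta_{g}} - e^{-t\Delta_{Z,D}} = \bigl(e^{-t\Delta_{g}} - e^{-t\Delta_{1,0}}P_{0}\bigr) - e^{-t\Delta_{Z,1}},
\]
and both summands are trace class.

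Your direct argument via the partition $M = M_{0}\cup Z$ and Hilbert--Schmidt factorizations is correct for the two ``mixed'' terms involving $\chi_{M_{0}}$; the weight $\phi$ and the estimates of Lemma~\ref{lemma:convint} handle those exactly as you describe. The difference lies in the third term: what you identify as the ``crux'' --- the boundary-exponential estimate for $K_{g}-K_{1}$ on $Z\times Z$ --- is precisely the content of M\"uller's parametrix construction of $K_{g}$ in \cite{Mu}. So your route effectively re-derives the hard input rather than citing it. The paper's argument sidesteps this entirely by working at the operator level and exploiting the Fourier decomposition of $\Delta_{Z,D}$, which isolates the only non--trace-class piece ($e^{-t\Delta_{1,0}}$) and matches it directly against the corresponding piece of $e^{-t\Delta_{g}}$ via the cited result. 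Your approach would yield a self-contained proof; the paper's buys brevity by leaning on the existing literature.
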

This is a corollary of Proposition 6.4 in \cite{Mu}. The statement
of that proposition can be rewritten in our notation as follows:

Assume that $M$ can be decomposed as $M=M_{0}\cup Z$ with
$Z=[1,\infty)\times S^{1}$. Let $P_{0}$ be the orthogonal projection
of $L^{2}(M,dA_{g})$ onto $L^{2}([1,\infty),y^{-2}dy)$. Then for
every $t>0$, $e^{-t\Delta_{g}}-e^{-t\Delta_{1,0}}P_{0}$ is a trace
class operator.

To see that Proposition \ref{prop:trcphomhoc} follows from this
statement, recall what we explained in Section
\ref{subsection:swc}: the operator $\Delta_{Z,D}$ can be
decomposed as $\Delta_{Z,D}=\Delta_{1,0}\oplus \Delta_{Z,1}$, where
the heat operator $e^{-t\Delta_{Z,1}}$ is trace class. So we have:
$$\Vert e^{-t\Delta_{g}}-e^{-t \Delta_{Z,D}}\Vert_{1} =
\Vert e^{-t\Delta_{g}}-e^{-t \Delta_{1,0}}\Vert_{1} + \Vert
e^{-t\Delta_{Z,1}}\Vert_{1}$$

Now, let us consider the operator $\Delta_{a,0}$ for $a>1$. To see
that $e^{-t\Delta_{g}}-e^{-t\Delta_{a,0}}$ is trace class, we will
proceed by writing  the difference as
$$e^{-t\Delta_{g}}-e^{-t\Delta_{a,0}} =
e^{-t\Delta_{g}}-e^{-t\Delta_{1,0}} +
e^{-t\Delta_{a,0}}-e^{-t\Delta_{1,0}}.$$ By Proposition \ref{prop:trcphomhoc},
the first difference is trace class, so it suffices to show that
$e^{-t\Delta_{a,0}}-e^{-t\Delta_{1,0}}$ is trace class.

\begin{proposition} For any $a> 1$ and $t>0$ the operator $e^{-t\Delta_{a,0}}-e^{-t\Delta_{1,0}}$ acting on
$L^{2}([1,\infty),y^{-2}dy)$ is trace class and the trace is given
by:
$$\tr(e^{-t\Delta_{a,0}}-e^{-t\Delta_{1,0}}) = -{\frac{1}{\sqrt{4\pi t}}} \ e^{-t/4} \log(a).$$
As an operator on $L^{2}([a,\infty),y^{-2}dy)$ the trace is given
by:
$$\tr(e^{-t\Delta_{a,0}}-e^{-t\Delta_{1,0}}) = -{\frac{e^{-t/4}}{\sqrt{4\pi}}} \text{Erf}\,(\log(a)/\sqrt{t}),$$
where $\text{Erf}\,(s)= \int_{0}^{s} e^{-v^{2}}dv$.
\label{prop:tcphocahoc1}
\end{proposition}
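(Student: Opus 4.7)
My plan is to exploit the explicit formula (\ref{eq:psuba}). After the unitary $U\colon L^{2}([b,\infty),y^{-2}dy)\to L^{2}([\log b,\infty),du)$, $Uf(u)=e^{-u/2}f(e^{u})$, the operator $\Delta_{b,0}$ is intertwined with the Dirichlet realization of $-\partial_{u}^{2}+\tfrac14$ on $[\log b,\infty)$, and (\ref{eq:psuba}) becomes the standard method-of-images heat kernel
\[
\tilde p_{b}(u,u',t)=\frac{e^{-t/4}}{\sqrt{4\pi t}}\bigl(e^{-(u-u')^{2}/4t}-e^{-(u+u'-2\log b)^{2}/4t}\bigr).
\]
Thus the difference $e^{-t\Delta_{a,0}}-e^{-t\Delta_{1,0}}$ is controlled by the shift $2\log a$ between the two image terms, and everything reduces to elementary one-dimensional Gaussian integrals.

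For the trace class property I would argue via the resolvent difference. In $u$-coordinates the Dirichlet Green's functions on $[0,\infty)$ and $[\log a,\infty)$ for $-\partial_{u}^{2}+\tfrac14+\mu$ differ on $[\log a,\infty)^{2}$ by a rank one kernel of the form $c(\mu)\,e^{-\kappa u}e^{-\kappa u'}$ with $\kappa^{2}=\mu+\tfrac14$. Consequently, on $L^{2}([a,\infty),y^{-2}dy)$ the resolvent difference is of finite rank and a standard Kato--Birman / functional-calculus argument yields trace class for $e^{-t\Delta_{a,0}}-e^{-t\Delta_{1,0}}$. On the larger space $L^{2}([1,\infty),y^{-2}dy)$ the extension by zero contributes extra pieces of the kernel supported where $y<a$ or $y'<a$; these have the explicit Gaussian decay read off from (\ref{eq:psuba}) and can be handled by a Hilbert--Schmidt estimate analogous to the one used in Proposition \ref{lemma:traceclassro}.

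Granted trace class, the trace is just the diagonal integral. Substituting $u=\log y$ and reading off $p_{a}(y,y,t)$ and $p_{1}(y,y,t)$ from (\ref{eq:psuba}), the trace on $L^{2}([1,\infty),y^{-2}dy)$ becomes
\[
\frac{e^{-t/4}}{\sqrt{4\pi t}}\left[-\int_{0}^{\log a}\bigl(1-e^{-u^{2}/t}\bigr)du+\int_{\log a}^{\infty}\bigl(e^{-u^{2}/t}-e^{-(u-\log a)^{2}/t}\bigr)du\right].
\]
The key cancellation is the following: after shifting $v=u-\log a$ in the last integrand, the two tails of $e^{-u^{2}/t}$ combine to the full Gaussian $\int_{0}^{\infty}e^{-u^{2}/t}du$, which is exactly killed by $\int_{0}^{\infty}e^{-v^{2}/t}dv$; only the constant $-\log a$ survives, producing the first formula. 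On $L^{2}([a,\infty),y^{-2}dy)$ only the second bracket is present and the cancellation is incomplete: the leftover piece is $-\int_{0}^{\log a}e^{-u^{2}/t}du=-\sqrt{t}\,\text{Erf}(\log a/\sqrt{t})$, which combined with the prefactor gives the Erf formula. The main obstacle is the trace class verification on $L^{2}([1,\infty))$, where the resolvent difference is no longer finite rank; once that is in place the trace computation is a direct Gaussian evaluation via $u=\log y$.
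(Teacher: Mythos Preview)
Your proposal is correct, and the evaluation of the diagonal integral via the substitution $u=\log y$ and the Gaussian cancellation is exactly how the paper computes the two trace formulas. Where you differ from the paper is in the trace class verification. The paper proceeds entirely at the level of the explicit kernels (\ref{eq:psuba}): it first checks by direct computation that the difference kernel lies in $L^{2}([1,\infty)^{2},y^{-2}y'^{-2}\,dy\,dy')$, and then upgrades Hilbert--Schmidt to trace class by the weighted factorization
\[
e^{-t\Delta_{a,0}}-e^{-t\Delta_{1,0}} = e^{-\frac{t}{2}\Delta_{a,0}}M_{\phi}\cdot M_{\phi}^{-1}\bigl(e^{-\frac{t}{2}\Delta_{a,0}}-e^{-\frac{t}{2}\Delta_{1,0}}\bigr) + \bigl(e^{-\frac{t}{2}\Delta_{a,0}}-e^{-\frac{t}{2}\Delta_{1,0}}\bigr)M_{\phi}^{-1}\cdot M_{\phi}e^{-\frac{t}{2}\Delta_{1,0}},
\]
each factor being Hilbert--Schmidt. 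Your route through the logarithmic unitary and the rank-one resolvent difference on $L^{2}([a,\infty))$ is a genuinely different and more structural argument on the smaller space; it explains \emph{why} the difference is so tame (a single image term is shifted) rather than verifying it by brute estimation. The cost is that you must invoke a functional-calculus step (trace class resolvent difference $\Rightarrow$ trace class $e^{-tA}-e^{-tB}$), which is standard but not entirely trivial, and on $L^{2}([1,\infty))$ you end up needing essentially the same $M_{\phi}$-weighted Hilbert--Schmidt estimate as the paper for the pieces supported where $y<a$ or $y'<a$. Either approach is fine; yours gives more insight on $[a,\infty)$, the paper's is more uniform and self-contained.
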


\begin{proof}
Let us just sketch the proof. For the complete proof, see
\cite{Aldana}.

We use the explicit expression of each heat kernel given
by equation (\ref{eq:psuba}) to prove that, for each $t>0$,
$e^{-t\Delta_{a,0}}-e^{-t\Delta_{1,0}}$ is a Hilbert Schmidt
operator. We prove this by direct computation, showing that the
difference of the heat kernels is in $L^{2}([1,\infty)\times
[1,\infty), {\frac{dy'}{y'^{2}}} {\frac{dy}{y^{2}}})$. The
computations are tiresome and involve functions of the form
$\exp{\left(-{\frac{\log(yy'/a^{2})^{2}}{4t}}\right)}$ and
$\exp{\left(-{\frac{\log(y/y')^{2}}{{2t}}}\right)}$ that should be
properly bounded.

The second step is to decompose the difference as the following sum:
\begin{multline*}
e^{-t \Delta_{a,0}}-e^{-t\Delta_{1,0}}=
e^{-t/2\Delta_{a,0}}M_{\phi}\cdot M_{\phi}^{-1}
(e^{-t/2\Delta_{a,0}}-e^{-t/2\Delta_{1,0}})\\ +
(e^{-t/2\Delta_{a,0}}-e^{-t/2\Delta_{1,0}})M_{\phi}^{-1}\cdot
M_{\phi}e^{-t/2\Delta_{1,0}},\end{multline*} where $M_{\phi}$ is
multiplication by the function $\phi$ defined in equation
(\ref{eq:phiaux}) with $\beta=1/2$. We then prove that each term is
Hilbert Schmidt in a similar fashion as we did in Section \ref{subsection:tcp}.

Now, let us compute the trace:
\begin{multline*}
\tr(e^{-t\Delta_{a,0}}-e^{-t\Delta_{1,0}}) = \int_{1}^{\infty}
(p_{a}(y,y,t)-p_{1}(y,y,t)) {\frac{dy}{y^{2}}}\\
={\frac{e^{-t/4}}{\sqrt{4\pi t}}} \int_{a}^{\infty}
(e^{-(\log(y^{2}))^{2}/4t} - e^{-(\log(y^{2})-\log(a^{2}))^{2}/4t})
{\frac{dy}{y}}\\ - {\frac{e^{-t/4}}{\sqrt{4\pi t}}} \int_{1}^{a} (1
-e^{-(\log(y^{2}))^{2}/4t}) {\frac{dy}{y}} =
-{\frac{e^{-t/4}}{\sqrt{4\pi t}}} \log(a).
\end{multline*}
If we consider $e^{-t\Delta_{a,0}}-e^{-t\Delta_{1,0}}$ as an
operator acting on $L^{2}([a,\infty),y^{-2}dy)$ we have that:
\begin{align*}
\tr(e^{-t\Delta_{a,0}}-e^{-t\Delta_{1,0}}) &= \int_{a}^{\infty}
(p_{a}(y,y,t)-p_{1}(y,y,t)) {\frac{dy}{y^{2}}}\\ &=
-{\frac{e^{-t/4}}{\sqrt{4\pi t}}} \int_{1}^{a} e^{-(\log(y))^{2}/t}
{\frac{dy}{y}}.
\end{align*}
\end{proof}

\begin{remark}
The trace of $e^{-t\Delta_{a,0}}-e^{-t\Delta_{1,0}}$ as an operator
on $L^{2}([a,\infty),y^{-2}dy)$ has an asymptotic expansion for
small values of $t$. This follows from Proposition
\ref{prop:tcphocahoc1} and the fact that $\text{Erf}\,(x)$ has an
expansion for $x\gg 1$. Taking into account only the first term we
have that $\text{Erf}\,(x) = {\frac{\sqrt{\pi}}{2}} + O(x^{-1})$, as
$x\to \infty$ from which we infer that:
$$\tr(e^{-t\Delta_{a,0}}-e^{-t\Delta_{1,0}})_{L^{2}([a,\infty),y^{-2}dy)} =
-{\frac{1}{4}} + O(\sqrt{t}) \quad \text{ as } t\to 0.$$
\end{remark}

\begin{remark}
Let us study the case when the manifold $M$ can be decomposed as
$M=M_{0}\cup Z_{a}$ with $a\geq 1$ and we want to compare the
operators $e^{-t\Delta_{g}}$ and $e^{-t \Delta_{1,0}}$. In this case
we could consider the operator $e^{-t \Delta_{1,0}}$ acting on
$L^{2}(M,dA_{g})$ in the way explained in Section
\ref{subsection:oheatkernels}. However it is more convenient and
accurate to consider the extended space:
\begin{multline}L^{2}(M,dA_{g})\oplus L^{2}([1,a],y^{-2}dy)\\ = L^{2}(M_{0},dA_{g})\oplus
L^{2}_{0}(Z_{a})\oplus L^{2}([a,\infty),y^{-2}dy) \oplus
L^{2}([1,a],y^{-2}dy)\notag \\ = L^{2}(M_{0},dA_{g})\oplus
L^{2}_{0}(Z_{a}) \oplus L^{2}([1,\infty),y^{-2}dy)
\end{multline}
where $L^{2}_{0}(Z_{a})$ is the space defined in equation (\ref{eq:spavanizfmZa}). Then the
operators $e^{-t\Delta_{g}}$ and $e^{-t \Delta_{1,0}}$ act on the
extended space by being null where they are not defined. In this way
we have that
\begin{multline}
\tr(e^{-t\Delta_{g}}-e^{-t \Delta_{1,0}})_{L^{2}(M)\oplus
L^{2}([1,a])}\\ = \tr(e^{-t\Delta_{g}}-e^{-t \Delta_{a,0}}
)_{L^{2}(M)} + \tr(e^{-t\Delta_{a,0}}-e^{-t \Delta_{1,0}})_{L^{2}([1,\infty))}
\label{eq:reltrexspLgL1}
\end{multline}
where for the sake of simplicity we dropped the densities in the
notation of the $L^{2}$ spaces. \label{remark:domLgL1}\end{remark}

\section{Asymptotics of relative heat traces for small time}
\label{section:asypexpant0}

In this section we prove the existence of an asymptotic expansion in $t$ of the relative heat trace
$\tr(T^{-1}e^{-t\Delta_{h}}T-e^{-t\Delta_{g}})$ for small time.
More precisely, we prove that for any $\nu \geq 1$, there exists an expansion up to order $\nu$ of the relative heat trace
as $t\to 0$. By an expansion up to order $\nu$ we mean that the remainder term is an $O(t^\nu)$.

We give explicit conditions on the decay of the conformal factor and its derivatives that guarantee the existences of such expansion.

\subsection{Asymptotics for non-compactly supported perturbations}
\label{subsection:expanstrace1} Let $(M,g)$ be a swc. For the sake of simplicity we assume
that $(M,g)$ has only one cusp $Z \cong [1,\infty)\times S^1$ with the
hyperbolic metric on it. We take $g$ as the background metric on
$M$. Let $h = e^{2\varphi}g$. To start with, let us assume that
for $(y,x) \in Z$, the functions $\varphi(y,x)$ and $\Delta_{g}
\varphi(y,x)$ are $O(y^{-1})$ as $y\to \infty$.

Let $n>1$, let us introduce the following notation:
\begin{equation}M_{n} :=
M_{0}\cup([1,n]\times S^{1}), \quad
Z_{n}'=[1,n]\times S^{1}, \quad
Z_{n}=[n,\infty)\times S^{1}.\label{eq:decompmfd}\end{equation}

We start by constructing
the kernel of a parametrix $Q_{h}(z,w,t)$ of the heat operator
associated to $\Delta_{h}$ by patching together suitable heat
kernels over $Z_{3}' = M_{3}\cap Z = [1, 3]\times S^{1}$. Let us
consider the following kernels:
\begin{itemize}
\item $K_{1,h}(z,w,t)$: the heat kernel of $\Delta_{1,h}$ on the
horn $\wt{Z} = \R^{+}\times S^1$, as was defined in Section \ref{subsection:heatkernels&estimates}.
\item $K_{Z,h}(z,w,t)$: the heat kernel for $\Delta_{Z,h}$, as defined in Section
\ref{subsection:oheatkernels}. $K_{Z,h}$ is given by equation
(\ref{eq:dhkdecfcmhc}).
\item For the compact part we consider a closed manifold $W$ containing
$M_{2}$ isometrically. Let $\Delta_{W,h}$ be the Laplacian on $W$
and $K_{W,h}(z,w,t)$ be the kernel of the corresponding heat
operator $e^{-t\Delta_{W}}$.
\end{itemize}

For any two constants $1<b<c$, let $\phi_{(b,c)}$ be a smooth
function on $[1,\infty)\times S^{1}$ that is constant in the second
variable, is non-decreasing in the first variable, and satisfies
$\phi_{(b,c)}(y,x) = 0$ for $y\leq b$, and $\phi_{(b,c)}(y,x) = 1$
for $y\geq c$. Let $\psi_{2} = \phi_{({\frac{5}{4}}, 2)}$ and
$\psi_{1} = 1 - \psi_{2}$; then $\{\psi_{1},\psi_{2}\}$ is a
partition of unity on $[1, 2]\times S^{1}$. Let $\varphi_{2} =
\phi_{(1,{\frac{9}{8}})}$ and $\varphi_{1} = 1 -
\phi_{({\frac{5}{2}}, 3)}$, so that $\varphi_{i}=1$ on the support
of $\psi_{i}$, $i=1,2$. Extend these functions to $M$ in the obvious
way. Note that $\vert\nabla_{h} \varphi_{i}(z)\vert \ll 1$ and
$\vert \Delta_{h} \varphi_{i}(z)\vert \ll 1,$ for $i=1,2$. For this
choice of functions we have that:
\begin{itemize}
\item $\supp \nabla_{h}\varphi_{1}\subseteq [{\frac{5}{2}},3]\times S^1$, and,  $\supp \psi_{1}\subseteq M_{2}$.
\item $\supp \nabla_{h}\varphi_{2}\subseteq [1,{\frac{9}{8}}]\times S^1$, and,
$\supp \psi_{2}\subseteq [{\frac{5}{4}},\infty)\times S^1$.
\end{itemize}

Now, we put:
\begin{equation}
Q_{h}(z,w,t) = \varphi_{1}(z)K_{W,h}(z,w,t)\psi_{1}(w) +
\varphi_{2}(z)K_{1,h}(z,w,t)\psi_{2}(w).
\label{eq:parametrixhchas}\end{equation} From the properties of the
heat kernels, $K_{W,h}$ and $K_{1,h}$, and the construction of the
gluing functions it is easy to see that $Q_{h}(z,w,t)\to
\delta_{w-z}$, as $t\to 0$.
\begin{lemma} There exist constants $C \geq 0$ and $c>0$ such that
$$\left\vert \left( {\frac{\partial}{\partial t}} +\Delta_{h,z}\right)
Q_{h}(z,w,t)\right\vert \leq C e^{-c/t}, \quad \text{ for }\quad
0<t\leq 1.$$
\end{lemma}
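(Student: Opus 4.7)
The plan is to apply the heat operator $\partial_t + \Delta_{h,z}$ to each of the two pieces of $Q_h$ via the product rule for the Laplacian, to exploit that each local heat kernel satisfies the correct heat equation on the support of its companion cutoff, and then to control the remaining cutoff-commutator terms by off-diagonal heat kernel estimates.

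For the first piece $\varphi_1(z)K_{W,h}(z,w,t)\psi_1(w)$, applying $\partial_t+\Delta_{h,z}$ and using $\Delta_h(fK)=(\Delta_h f)K+f\Delta_h K-2\langle\nabla_h f,\nabla_h K\rangle_h$ produces three terms in the $z$-variable, all multiplied by $\psi_1(w)$: the main term $\varphi_1(\partial_t+\Delta_{h,z})K_{W,h}$ and the commutator terms $(\Delta_h\varphi_1)K_{W,h}$ and $-2\langle\nabla_h\varphi_1,\nabla_{h,z}K_{W,h}\rangle_h$. Since $W$ contains (a neighborhood of) $\supp(\varphi_1)$ isometrically, on that support $\Delta_{h,z}$ coincides with $\Delta_{W,h,z}$, so the main term vanishes. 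The analogous computation for the second piece produces the same cancellation, because on $\supp(\varphi_2)\subseteq Z$ the operator $\Delta_{h,z}$ coincides with $\Delta_{1,h,z}$ on the complete cusp $\wt{Z}$.

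What remains is a sum of cutoff-commutator terms of the form $(\Delta_h\varphi_i)K+\langle\nabla_h\varphi_i,\nabla K\rangle_h$ multiplied by $\psi_i(w)$. The decisive observation is that each such term is supported where $z\in\supp(\nabla_h\varphi_i)\cup\supp(\Delta_h\varphi_i)$ and $w\in\supp(\psi_i)$, and by the explicit choice of cutoffs these $z$- and $w$-supports are separated by a positive Riemannian distance: for $i=1$ the $y$-coordinate of $z$ lies in $[5/2,3]$ while that of $w$ is at most $2$, so $d_g(z,w)\geq\log(5/4)$; for $i=2$ one has $y_z\leq 9/8$ while $y_w\geq 5/4$, so $d_g(z,w)\geq\log(10/9)$. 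Quasi-isometry of $g$ and $h$ transfers these lower bounds to $d_h$ up to a multiplicative constant.

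Plugging $d(z,w)\geq\delta>0$ into the off-diagonal estimates \eqref{eq:estKh} and \eqref{eq:estgrdK*}, and using boundedness of $|\nabla_h\varphi_i|$, $|\Delta_h\varphi_i|$ and of the factors $i(z), i(w)$ on the relevant (compact-in-$z$) supports, one obtains a bound of the form $C(t^{-1}+t^{-3/2})e^{-c\delta^2/t}$ valid for $0<t\leq 1$. The elementary inequality $t^{-k}e^{-a/t}\leq C'e^{-a'/t}$ for $0<t\leq 1$ and any $0<a'<a$ then absorbs the polynomial factor and yields the desired bound $|(\partial_t+\Delta_{h,z})Q_h|\leq Ce^{-c/t}$. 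No step here is genuinely hard; the only mild subtlety is ensuring that $W$ is chosen so as to isometrically contain a neighborhood of $\supp(\varphi_1)\subseteq M_3$ (as stated, the excerpt mentions $M_2$, but this is a harmless strengthening of the construction, needed precisely so that the main term in the first piece cancels).
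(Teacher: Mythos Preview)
Your approach is essentially the same as the paper's: apply $\partial_t+\Delta_{h,z}$, use the product rule, cancel the main terms since each local heat kernel solves the heat equation on the support of its gluing function, and then estimate the commutator terms using off-diagonal Gaussian decay and the separation of $\supp(\nabla\varphi_i)$ from $\supp(\psi_i)$.

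There is, however, a gap in your treatment of the $i=2$ piece. You write that the factors $i(z),\,i(w)$ are bounded ``on the relevant (compact-in-$z$) supports'', but while the $z$-support $\supp(\nabla\varphi_2)\subseteq[1,9/8]\times S^1$ is compact, the $w$-support $\supp(\psi_2)=[5/4,\infty)\times S^1$ is not, and $i(w)=y_w$ is unbounded there. Plugging only the uniform lower bound $d_g(z,w)\geq\delta=\log(10/9)$ into \eqref{eq:estKh}--\eqref{eq:estgrdK*} leaves an uncontrolled factor $i(w)^{1/2}=y_w^{1/2}$, so your claimed bound $C(t^{-1}+t^{-3/2})e^{-c\delta^2/t}$ does not follow. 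The paper fixes this by using the sharper, $w$-dependent lower bound $d_g(z,w)\geq\log(8y_w/9)$ (valid since $y_z\leq 9/8$) and splitting the Gaussian: one half, $e^{-c(\log(8y_w/9))^2/(2t)}$, absorbs the polynomial growth $y_w^{1/2}$ uniformly for $0<t\leq 1$, while the other half is bounded by $e^{-c(\log(10/9))^2/(2t)}$ and supplies the desired $e^{-c'/t}$ decay. Once you make this adjustment, your argument goes through.

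Your final remark about $W$ needing to isometrically contain a neighborhood of $\supp(\varphi_1)\subseteq M_3$ (rather than just $M_2$ as stated) is correct and is indeed a harmless strengthening of the construction.
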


\begin{proof}
We use the estimates of the heat kernels given by equations
(\ref{eq:estKh}), (\ref{eq:estgrdK*}) and (\ref{eq:estLaplK*}) as
well as Theorem \ref{theorem:trchocdacm} and the equivalence of the
geodesic distances $d_{g}$ and $d_{h}$. From the definition of $Q_h$ and the properties of the
heat kernels it follows that:
\begin{multline*}
\left\vert\left( {\frac{\partial}{\partial t}} +\Delta_{h,z}\right)
Q_{h}(z,w,t) \right\vert \ll \vert(\langle
\nabla\varphi_{1},\nabla_{z}K_{W,h}\rangle +
(\Delta_{h}\varphi_{1})K_{W,h})\psi_{1}(w)\vert \\ + \vert(\langle
\nabla\varphi_{2},\nabla_{z}K_{1,h}\rangle +
(\Delta_{h}\varphi_{2})K_{1,h})\psi_{2}(w)\vert.
\end{multline*}

Note that $\left\vert\left( {\frac{\partial}{\partial t}}
+\Delta_{h,z}\right) Q_{h}(z,w,t) \right\vert$ has compact support
in $z$. We consider the following terms separately:
\begin{eqnarray*} S_{1}&:=&\vert(\langle
\nabla\varphi_{1},\nabla_{z}K_{W,h}\rangle +
(\Delta_{h}\varphi_{1})K_{W,h})\psi_{1}(w)\vert,\\
S_{2}&:=&\vert(\langle \nabla\varphi_{2},\nabla_{z}K_{1,h}\rangle +
(\Delta_{h}\varphi_{2})K_{1,h})\psi_{2}(w)\vert.\end{eqnarray*}
$S_{1}= 0$ unless $z\in \supp \nabla\varphi_{1}$ and
$w\in \supp \psi_{1}$. In this case $d_{g}(z,w)\geq \log(5/4)$, then
that taking $c_{1}'=c\log(5/4)$ we obtain:
\begin{eqnarray*}
S_{1} &\leq& (\vert\nabla\varphi_{1}(z)\vert \ \vert
\nabla_{z}K_{W,h}(z,w,t)\vert + \vert\Delta_{h}\varphi_{1}(z)\vert \
\vert K_{W,h}(z,w,t)\vert) \chi_{\supp \psi_{1}}(w)\\ &\ll&
t^{-3/2}e^{-c d_{g}^{2}(z,w)/t} + t^{-1}e^{-c d_{g}^{2}(z,w)/t} \ll
e^{-c_{1}'/2t} \text{ for } t\in (0,1].
\end{eqnarray*}
In the same way as above, $S_{2} = 0$ unless $z\in \supp
\nabla\varphi_{2}$ and $w=(v,u)\in \supp \psi_{2} =
[{\frac{5}{4}},\infty)\times S^1$. In this case $d_{g}(z,w)\geq
\log(v/(9/8))\geq \log(10/9)$. Therefore:
\begin{equation*}
S_{2} \ll v^{1/2} e^{-c(\log(8v/9))^2/2t} (t^{-3/2} + t^{-1})
e^{-c_{2}'/2t} \ll e^{-c_{2}'/4t},
\end{equation*}
where $c_{2}'=c\log(10/9)$. This finishes the proof of the lemma.
\end{proof}

\begin{remark} \label{remark:nulhoatpQa} Note that
$$\left. \left( {\frac{\partial}{\partial t}}
+\Delta_{h,z}\right) Q_{h}(z,w,t) \right\vert_{w=z} = 0.$$

In order that the expression above does not vanish we need that
$$d_{g}(z,w)\geq \min \{ \log(5/4), \log(10/9)\}>0.$$
\end{remark}

We now prove that in the expression of asymptotic expansion of the
relative heat trace we can replace the heat kernel $K_{h}$ by the
parametrix $Q_{h}$ defined above.

\begin{lemma}
There exist constants $C\geq 0$ and $c_{3}>0$ such that, for any
$0<t\leq 1$:
$$\int_{M} \vert Q_{h}(z,z,t)-K_{h}(z,z,t)\vert dA_{h}(z) \leq C e^{-\frac{c_{3}}{t}}.$$
\label{lemma:paramhkcm3}
\end{lemma}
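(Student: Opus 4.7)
My plan is to apply Duhamel's principle to the parametrix $Q_h$. Define the remainder
\begin{equation*}
R_h(z,w,t) := \left(\frac{\partial}{\partial t} + \Delta_{h,z}\right) Q_h(z,w,t).
\end{equation*}
By the previous lemma, $|R_h(z,w,t)| \leq C e^{-c/t}$ for $0 < t \leq 1$. Moreover, the explicit formula for $R_h$ makes it clear that $R_h$ is compactly supported in the first variable $z$, with support contained in a fixed compact set $K' \subset [1,3]\times S^{1}$ (this is where $\nabla \varphi_i$ and $\Delta_h \varphi_i$ live). Since $Q_h$ and $K_h$ have the same delta-function initial data as $t\to 0^+$, the difference $u = K_h - Q_h$ solves $(\partial_t + \Delta_h) u = -R_h$ with $u(0)=0$, so Duhamel's formula (in the form of Section \ref{section:Duhamel's}) yields
\begin{equation*}
K_h(z,w,t) - Q_h(z,w,t) = -\int_0^t \int_M K_h(z,z',t-s)\, R_h(z',w,s)\, dA_h(z')\, ds.
\end{equation*}

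Next I would set $w = z$, take absolute values, integrate over $M$ against $dA_h(z)$, and apply Fubini (justified by the uniform exponential bound on $R_h$ and its compact $z'$-support) to bring the $z$-integration innermost. Using the sub-Markov property $\int_M K_h(z,z',\tau)\, dA_h(z) \leq 1$ --- which holds because $(M,h)$ is complete (the decay on $\varphi$ makes $h$ quasi-isometric to $g$) and so the heat semigroup is a contraction on $L^\infty$ --- together with $|R_h(z',z,s)| \leq C e^{-c/s}$, I expect to obtain
\begin{equation*}
\int_M |K_h(z,z,t) - Q_h(z,z,t)|\, dA_h(z) \leq C\, \vol_h(K') \int_0^t e^{-c/s}\, ds \leq C'\, t\, e^{-c/t},
\end{equation*}
where the last inequality uses that $s \mapsto e^{-c/s}$ is increasing on $(0,t]$. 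For $0 < t \leq 1$ this is bounded by $C'' e^{-c_3/t}$ for any $c_3 < c$, which gives the required estimate.

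The only slightly delicate point is that $R_h(z',z,s)$ is \emph{not} compactly supported in the second variable $z$, because $\psi_2$ equals $1$ far out in the cusp; this is precisely what forces me to integrate $K_h(\cdot, z', t-s)$ in $z$ first and to absorb the non-compact variable via the sub-Markov bound, so that only the compact $z'$-support of $R_h$ enters, yielding the harmless factor $\vol_h(K')$. If one preferred not to appeal to the sub-Markov property, the same conclusion can be reached via the Gaussian off-diagonal estimates on $K_h$ from Section \ref{subsection:heatkernels&estimates} together with $d_g(z,z') \geq |\log(y/y')|$ for $z'\in K'$ in the cusp; this renders the $z$-integral convergent directly and contributes only an additional polynomial factor in $t$ that is absorbed into $e^{-c_3/t}$.
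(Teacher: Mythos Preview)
Your argument is correct. Both you and the paper start from Duhamel's principle, but the estimates diverge after that. The paper uses the support separation recorded in Remark~\ref{remark:nulhoatpQa}: whenever $R_h(w,z,\cdot)\neq 0$ one has $d_g(w,z)\geq c_0>0$, so the Gaussian factor in the bound \eqref{eq:estKh} for $K_h(z,w,s)$ already produces an extra $e^{-c_2/s}$; the paper then splits the $(z,w)$-integration over the two pieces $M_2\times([\tfrac52,3]\times S^1)$ and $Z_{5/4}\times([1,\tfrac98]\times S^1)$ and integrates the Gaussian explicitly. Your route bypasses the Gaussian bounds: you simply use $|R_h(z',z,s)|\leq C e^{-c/s}\chi_{K'}(z')$ and absorb $K_h$ via the sub-Markov inequality $\int_M K_h(\cdot,z',\tau)\,dA_h\leq 1$, leaving only the harmless factor $\vol_h(K')$. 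This is shorter and avoids the case split; the paper's approach (which is essentially the alternative you sketch at the end) has the minor advantage that it makes the off-diagonal decay manifest and does not need to invoke stochastic completeness.
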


\begin{proof} Applying Duhamel's principle to the heat kernel $K_{h}$ and the parametrix $Q_{h}$ we obtain:
\begin{multline*}
Q_{h}(z,z',t) - K_{h}(z,z',t) =\\ \int_{0}^{t} \int_{M} K_{h}(z,w,s) \left({\frac{\partial}{\partial t}}
+\Delta_{h,w}\right) Q_{h}(w,z',t-s) \ dA_{h}(w) \ ds.\end{multline*}
Remark \ref{remark:nulhoatpQa} implies that:
\begin{align*} & \int_{M} \vert Q_{h}(z,z,t)-K_{h}(z,z,t)\vert dA_{h}(z)\\ & \leq \int_{0}^{t}
\int_{M} \int_{M} \vert K_{h}(z,w,s) \left({\frac{\partial}{\partial
t}} +\Delta_{h,w}\right) Q_{h}(w,z,t-s)\vert \ dA_{h}(w) \
dA_{h}(z)\ ds
\\ &= \int_{0}^{t} \left(\int_{M_{2}}\int_{[\frac{5}{2},3]\times S^1} \cdot \ dA_{h}(w) \ dA_{h}(z)
+ \int_{Z_{\frac{5}{4}}}\int_{[1,\frac{9}{8}]\times S^1} \cdot \
dA_{h}(w) \ dA_{h}(z)
 \right)\ ds.\end{align*}

The first integral on the right-hand side is bounded by:
\begin{multline*}
\int_{0}^{t} \int_{M_{2}}\int_{[\frac{5}{2},3]\times S^1} i(z)^{1/2}
s^{-1} e^{-{\frac{c_2}{s}}} e^{-{\frac{c'}{t-s}}}\
dA_{h}(w) \ dA_{h}(z)\ ds\\
\ll \left(\int_{0}^{t}  e^{-{\frac{c_2}{2s}}} e^{-{\frac{c'}{t-s}}}\
ds\right) \left(\int_{\frac{5}{2}}^{3} {\frac{dv}{v^{2}}}\right) \ll
t e^{-{\frac{c_3}{t}}} \ll e^{-{\frac{c_3}{t}}}
\end{multline*}
since $0<t\leq 1$.

For the second integral on the right-hand side above, recall that
$\supp \psi_{2} \subset [5/4,\infty)\times S^1$. Thus:
\begin{multline*}\int_{0}^{t}
\int_{Z_{\frac{5}{4}}} \int_{[1,\frac{9}{8}]\times S^1} \vert
K_{h}(z,w,s) \left({\frac{\partial}{\partial t}}
+\Delta_{h,w}\right) Q_{h}(w,z,t-s)\vert \ dA_{h}(w) \ dA_{h}(z)\
ds\\ \ll \int_{0}^{t} \int_{\frac{5}{4}}^{\infty}
\int_{1}^{\frac{9}{8}} y^{1/2} e^{-{\frac{c_2}{2s}}}
e^{-{\frac{c_1}{t-s}}}\ \frac{dv}{v^2} \ \frac{dy}{y^2} \ ds \leq t
e^{-{\frac{c_3}{t}}} \leq e^{-{\frac{c_3}{t}}}.
\end{multline*}
\end{proof}
Since the function $e^{-2\varphi}$ is bounded, the
derivatives of the gluing functions $\varphi_{1}$ and $\varphi_{2}$
with respect to the metric $g$ satisfy the same bounds as the
derivatives with respect to the metric $h$. Then we can perform the
same construction for the kernel $K_{g}(z,w,t)$ to replace it by
$Q_{g}(z,w,t)$.


The relative heat trace is given by:
$$\tr(T^{-1}e^{-t\Delta_{h}}T - e^{-t\Delta_{g}}) = \int_{M} (K_{h}(z,z,t)e^{2\varphi(z)} - K_{g}(z,z,t))\ dA_{g}(z).$$
Using Lemma \ref{lemma:paramhkcm3}, we obtain:
\begin{multline*} \left\vert \int_{M} (K_{h}(z,z,t)e^{2\varphi(z)} - K_{g}(z,z,t)) dA_{g}(z)
\right. \\ \left.- \int_{M}  (Q_{h}(z,z,t)e^{2\varphi(z)} -
Q_{g}(z,z,t))  dA_{g}(z)\right\vert
 \ll e^{-c_{3}/t}.\end{multline*}
Therefore we have to determine the asymptotic expansion of the integral:
$$\int_{M} Q_{h}(z,z,t)e^{2\varphi(z)} - Q_{g}(z,z,t) dA_{g}(z).$$
The definitions of $Q_h$ and $Q_g$ induce a natural decomposition of the integral into
two regions of integration, the compact part and the cusp. However, when we use the local expansion of the heat kernel in the cusp
we need to integrate the remainder term uniformly. For this purpose we decompose the cusp as in (\ref{eq:decompmfd}): Let $a>1$, then
$$Z = Z_{a}' \cup Z_a.$$
Therefore the integral decomposes as:
$$\int_{M} Q_{h}(z,z,t)e^{2\varphi(z)} - Q_{g}(z,z,t) dA_{g}(z)= I_{0}(t) + I_{1}(t) + I_{2}(t),$$
where
\begin{eqnarray}
I_{0}(t)&=&\int_{M}
\psi_{1}(z)(K_{W,h}(z,z,t)e^{2\varphi(z)}-K_{W,g}(z,z,t)) \
dA_{g}(z),
 \\ I_{1}(t)&=& \int_{Z_{a}'}
\psi_{2}(z)(K_{1,h}(z,z,t)e^{2\varphi(z)}-K_{1,g}(z,z,t)) \
dA_{g}(z),\label{eq:asymaux1}\\
 I_{2}(t)&=& \int_{Z_{a}}
\psi_{2}(z)(K_{1,h}(z,z,t)e^{2\varphi(z)}-K_{1,g}(z,z,t)) \
dA_{g}(z).\label{eq:asymaux2}
\end{eqnarray}
For the moment we consider $a$ fixed, but later we will assign to it a value depending on $t$.

The integral $I_{0}$ has a complete asymptotic expansion in $t$. To see that, note that in the local
expansions of the kernels $K_{W,g}(z,z,t)$ and $K_{W,h}(z,z,t)$ the corresponding remainder terms
are uniformly bounded on compact sets, therefore they can be
integrated.

The other two integrals can be rewritten as traces of the operators:
\begin{eqnarray*}
A(t) &=& M_{\chi_{Z_{a}'}}M_{\psi_{2}} (T^{-1}e^{-t\Delta_{1,h}}T -
e^{-t\Delta_{1,g}}) \ \text{ and }\\
B(t) &=& M_{\chi_{Z_{a}}} M_{\psi_{2}} (T^{-1}e^{-t\Delta_{1,h}}T -
e^{-t\Delta_{1,g}}),\end{eqnarray*}
respectively. Propositions \ref{prop:casmpexptraoA} and \ref{prop:b1nauxoBaccf} below take care of these integrals.

\begin{proposition} Under the conditions of Theorem
\ref{theorem:trchocdacm}, there is a complete asymptotic
expansion as $t\to 0$ of the integral $I_{1}(t)$ in equation
(\ref{eq:asymaux1}). The asymptotic expansion has the following form:
$$ \int_{[1,a]\times S^{1}} \psi_{2}(z)(K_{1,h}(z,z,t)e^{2\varphi(z)}-K_{1,g}(z,z,t)) \
dA_{g}(z) \sim t^{-1} \sum_{j=0}^{\infty}\hat{a}_{j}t^{j}.$$
The coefficients $\hat{a}_j$ depend on the parameter $a$. There is a remainder term that also depends on $a$ as
$O(e^{-\frac{c}{a^{4}t}})$, for a positive constant $c$.
\label{prop:casmpexptraoA}
\end{proposition}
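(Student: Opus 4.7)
The plan is to pass to the universal cover $\pi \colon \mathbb{H} \to \tilde Z = \mathbb{R}^+ \times S^1$, write both heat kernels on the diagonal as sums over the deck group $\Gamma = \mathbb{Z}$ (acting by $x$-translations), and treat separately the identity contribution and the non-identity terms. Lifting $\varphi$ to $\tilde\varphi$ on $\mathbb{H}$ and setting $\tilde h = e^{2\tilde\varphi} g_0$, we have
$$K_{1,g}(z,z,t) = \sum_{n\in\mathbb{Z}} K_\mathbb{H}^{g_0}(\tilde z, \gamma_n \tilde z, t), \quad K_{1,h}(z,z,t) = \sum_{n\in\mathbb{Z}} K_\mathbb{H}^{\tilde h}(\tilde z, \gamma_n \tilde z, t),$$
where $\tilde z$ is any lift of $z$ and $\gamma_n(y,x)=(y,x+n)$.

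First I would dispose of the non-identity deck contributions. For $z=(y,x)\in Z_a'$ with $1\leq y\leq a = t^{-1/9}$ and $n\neq 0$, the hyperbolic distance between $\tilde z$ and $\gamma_n\tilde z$ is $2\sinh^{-1}(|n|/(2y))\geq c|n|\,t^{1/9}$. Applying the Gaussian estimate (\ref{eq:estKh}) to each summand (and its analogue for $\tilde h$, which is available since the decay of $\varphi$ and $\Delta_g\varphi$ makes $(\mathbb{H},\tilde h)$ of bounded geometry), the $n\neq 0$ terms are bounded by $C t^{-1}\exp(-cn^2 t^{-7/9})$; summing over $n$ and integrating against $\psi_2\,dA_g$ over $Z_a'$ yields an error of $O(\exp(-ct^{-7/9}))$, smaller than any power of $t$ and absorbable into the remainder.

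Next I would expand the identity contribution $e^{2\varphi(z)} K_\mathbb{H}^{\tilde h}(\tilde z,\tilde z,t) - K_\mathbb{H}^{g_0}(\tilde z,\tilde z,t)$ using the on-diagonal Minakshisundaram--Pleijel expansion. On the simply connected $(\mathbb{H},\tilde h)$ and $(\mathbb{H},g_0)$, both with infinite injectivity radius and globally bounded geometry, one obtains uniform expansions
$$K_\mathbb{H}^{\tilde h}(\tilde z,\tilde z,t) = \frac{1}{4\pi t}\sum_{k=0}^{N} a_k^{\tilde h}(\tilde z)\, t^k + R_N^{\tilde h}(\tilde z,t), \quad |R_N^{\tilde h}|\leq C_N t^N,$$
and similarly for $g_0$, where $a_k^{g_0}$ are constants by homogeneity and $a_k^{\tilde h}(\tilde z)$ are universal differential polynomials in $\tilde\varphi$ and its derivatives up to order $2k$ that reduce to $a_k^{g_0}$ when $\tilde\varphi=0$. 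Setting $c_k(z):=e^{2\varphi(z)}a_k^{\tilde h}(\tilde z)-a_k^{g_0}$ (the first two being $c_0=e^{2\varphi}-1$ and $c_1=\tfrac{1}{3}\Delta_g\varphi$), the decay hypothesis gives $c_k(z)=O(y^{-\alpha})$ on $Z$, so each
$$b_k := \int_{[1,\infty)\times S^1}\psi_2\, c_k\, dA_g$$
is finite; extending integration from $Z_a'$ to $[1,\infty)\times S^1$ introduces an error of $O(a^{-\alpha-1})=O(t^{(\alpha+1)/9})$ per order. Assembling the three pieces yields the claimed asymptotic expansion $I_1(t)\sim t^{-1}\sum_j b_j t^j$, with coefficients $b_j$ given by the integrals above.

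The main obstacle is securing a truly uniform remainder estimate in the Minakshisundaram--Pleijel expansion on the expanding region $Z_a'$, since standard remainders are controlled by the supremum of the metric and its derivatives on $\sqrt t$-balls, and one must verify these suprema are bounded independently of $y\in[1,a]$, i.e., are tied to the global bounded-geometry norms of $\tilde h$ on $\mathbb{H}$ rather than local ones. A secondary subtlety is fitting the polynomial truncation errors $O(t^{(\alpha+1)/9})$ into an integer-power asymptotic expansion: this works in the asymptotic sense (they are $o(t^{-1})$ for any $\alpha>0$), but the slow decay of these corrections is ultimately what forces the more stringent decay hypothesis $k\geq 9\nu+14$ used in Theorem \ref{theorem:asympexpanrht} once the contribution $I_2$ from the end of the cusp is brought into play.
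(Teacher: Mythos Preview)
Your approach matches the paper's exactly: lift to the universal cover $\mathbb{H}$, write each heat kernel on the diagonal as a sum over the deck group $\mathbb{Z}$, kill the $n\neq 0$ terms with Gaussian bounds, and expand the $n=0$ term via the uniform on-diagonal Minakshisundaram--Pleijel expansion afforded by the bounded geometry of $(\mathbb{H},\tilde h)$. One small correction: your lower bound $d(\tilde z,\gamma_n\tilde z)\geq c\,|n|\,t^{1/9}$ fails for large $|n|$ since $\sinh^{-1}$ grows only logarithmically, so the termwise estimate $C t^{-1}\exp(-cn^2 t^{-7/9})$ is not uniform in $n$; the paper instead factors out the minimal-distance contribution and bounds the remaining sum by comparison with an integral (obtaining the weaker but still sufficient $O(e^{-c/t^{1/9}})$), and your argument is easily repaired the same way.
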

\begin{proof} In order to deal with the integral $I_1(t)$ we first recall what $K_{1,h}$ and $K_{1,g}$ are. Recall that $h$
was extended to the horn $\wt Z$ and that $K_{1,h}(z,w,t)$
denotes the heat kernel for $\Delta_{h}$ on $\wt Z$. The idea of this
proof is to use the local asymptotic expansion of the corresponding heat kernels and find a
uniform bound on the remainder term.

The universal covering of $\wt{Z}$ is $\hat{Z}= {\R}^{+} \times \R$ with projection
$\pi: \hat{Z} \to \wt{Z}$ and group of deck transformations $\Gamma= \Z$.
The metric $h$ on $\wt Z$ induces a metric $\hat{h}$ on $\hat{Z}$,
that has the same curvature properties as $h$. In addition,
$\hat{h}=e^{2\hat{\varphi}}\hat{g}_{0}$, where $\hat{g}_{0}$ is the
lift of $g_{0}$ to $\hat{Z}$ and is precisely the hyperbolic metric
on $\H$, and the function $\hat{\varphi}$ is a lift of $\wt\varphi$
($\wt\varphi$ the extension of $\varphi$ to $\wt{Z}$),
$\hat{\varphi}=\wt{\varphi} \circ \pi$. It follows that $\hat{h}$
and $\hat{g}_{0}$ are quasi-isometric.
Therefore by Proposition 2.1 in \cite{MuSa}, the
injectivity radius of $\hat{h}$ is bounded from below by a positive
constant independent of the point. In this way $(\hat{Z},\hat{h})$
has bounded geometry. Let $k_{h}$ denote the heat kernel of
$\Delta_{\hat{h}}$ in $\hat{Z}$. It satisfies the following estimate:
\begin{equation}k_{h}(\wt z,\wt w,t)\leq C t^{-1} e^{-\frac{c\ d^{2}(\wt z, \wt w)}{t}}, \label{eq:ehklhH2}\end{equation}
where $\wt z, \wt w \in \hat{Z}$ and $0< t \leq 1$, \cite{ChLY}.
It is not difficult to verify that
\begin{equation} K_{1,h}(z,w,t)=\sum_{m\in \Z}
k_{h}(\wt z, \wt w+m,t),\label{eq:hkhitklh}\end{equation}
where $\pi(\wt z)= z$, $\pi(\wt w)= w$.

The construction above can be performed for the kernel $K_{1,g}$ as
well. Then the integral $I_{1}(t)$ becomes:
$$\int_{1}^{a}\int_{0}^{1} \wt{\psi}_{2}(\wt z)
\left(\sum_{m\in \Z} k_{h}(\wt{z}, \wt{z}+m,
t)e^{2\hat{\varphi}(\wt{z}+m)} - \sum_{l\in \Z} k_{g}(\wt{z},
\wt{z}+l , t) \right) \ dA_{\hat{g}}(\wt z),$$
because $F={\R}^{+} \times [0,1]$ is a fundamental domain for $\Gamma$
and the domain corresponding to $Z_{a}'$ in $F$ is $[1,a]\times
[0,1]$; and $\wt{\psi}_{2}$ is the natural extension and lift of $\psi_2$ to $\H$. Thus
\begin{align}I_{1}(t) &= \int_{1}^{a}\int_{0}^{1}
\wt{\psi}_{2}(\wt z) (k_{h}(\wt{z}, \wt{z},
t)e^{2\hat{\varphi}(\wt{z})} - k_{g}(\wt{z}, \wt{z}, t)) \
dA_{\hat{g}}(\wt z)\notag \\ & \  + \int_{1}^{a}\int_{0}^{1}
\wt{\psi}_{2}(\wt z) \sum_{m\neq 0} (k_{h}(\wt{z}, \wt{z}+m,
t)e^{2\hat{\varphi}(\wt{z}+m)} - k_{g}(\wt{z}, \wt{z}+m , t)) \
dA_{\hat{g}}(\wt z). \label{eq:sI1seraux1}\end{align}

We will start by estimating the second term on the right-hand side
of (\ref{eq:sI1seraux1}). Note that $\hat{\varphi}=\wt{\varphi}\circ
\pi$ implies that the function $e^{2\hat{\varphi}}$ is bounded.
This, the fact that the metrics $\hat{h}$ and $\hat{g}$ are
quasi-isometric and the estimate on the heat kernel $k_{h}$ imply
that: \begin{equation}\sum_{m\neq 0} k_{h}(\wt{z}, \wt{z}+m,
t)e^{2\hat{\varphi}(\wt{z}+m)}\ll t^{-1} \sum_{m\neq 0} \exp{\left(
-{\frac{c_{1}d_{\hat g}^{2}(\wt z, \wt z
+m)}{t}}\right)}.\label{eq:esserkhbhdchae}\end{equation}
The explicit expression of the hyperbolic distance in the upper
half plane gives: $$d_{\hat g}((\wt x, \wt y), (\wt x + m,
\wt y)) = \cosh^{-1} \left(1 + {\frac{m^{2}}{2 \wt{y}
^{2}}}\right).$$ If $s\geq 1$, $\cosh^{-1}(s)=\log(s+\sqrt{s^{2}-1})$; this implies:
\begin{eqnarray*}
d_{\hat g}((\wt x, \wt y), (\wt x + m, \wt y))
= \log \left(1 + {\frac{m^{2}}{2\wt{y} ^{2}}}+ {\frac{\vert
m\vert}{\wt{y}}} \sqrt{{\frac{m^{2}}{4\wt{y}^{2}}}+1} \right) \geq
\log \left(1 + {\frac{m^{2}}{2\wt{y} ^{2}}}\right).
\end{eqnarray*}
For $\wt{y} = y \in [1,a]$, $\log (1 + {\frac{m^{2}}{2\wt{y}
^{2}}})\geq \log(1 + {\frac{1}{2a^{2}}})$. Thus
$$e^{-{\frac{c_{1}d_{\hat g}^{2}(\wt z, \wt z +m)}{t}}} \leq e^{ -{\frac {c_{1}
\log(1 + {1}/{2 a^{2}})^{2}} {2t}}} e^{ -{\frac{c_{1} \log(1 +
{{m^{2}}/{2\wt{y} ^{2}}})^{2}}{2t}}}.$$ In addition, $0\leq s \leq 1$ satisfies $\log(1+s)\geq s/2$. Applying this to
$s=(2a^{2})^{-1}$ gives:
\begin{equation}
\sum_{m\neq 0} e^{-{\frac{c_{1}d_{\hat g}^{2}(\wt z, \wt z +m)}{t}}}
\leq e^{ -{\frac {c_{1}} {2^{5} a^{4} t}}} \sum_{m\neq 0} e^{
-{\frac{c_{1} \log(1 + {\frac{m^{2}}{2\wt{y} ^{2}}})^{2}}{2t}}} \leq
e^{ -{\frac {c_{2}}
{a^{4} t}}} \sum_{m\neq 0}e^{-{\frac{c_{1} \log(1 +
{\frac{m^{2}}{2a^{2}}})^{2}}{2t}}}, \label{eq:bshkmnzwra1}
\end{equation} with $c_{2}$ a positive constant. In order to estimate the series, we compare it with an
integral using the fact that $\exp{\left(-{\frac{c_{1} \log(1 +
{\frac{m^{2}}{2a^{2}}})^{2}}{2t}}\right)}$ is a decreasing function of $m$.
We proceed in the following way:
\begin{multline}
\sum_{m\neq 0}e^{-{\frac{c_{1} \log(1 +
{\frac{m^{2}}{2a^{2}}})^{2}}{2t}}} \ll \int_{1}^{\infty} e^{
-{\frac{c_{1} \log(1 + {\frac{u^{2}}{2a^{2}}})^{2}}{2t}}} du \\ \leq
\int_{1}^{\sqrt{2}a} e^{ -{\frac{c_{1} \log(1 +
{\frac{u^{2}}{2a^{2}}})^{2}}{2t}}} du + \int_{\sqrt{2}a}^{\infty}
e^{ -{\frac{2c_{1} \log({\frac{u}{\sqrt{2}a}})^{2}}{t}}} du\\ \ll
(\sqrt{2}a-1) + a\int_{0}^{\infty} e^{-{\frac{2c_{1} v^{2}}{t}}}
e^{v}  dv \ll a (1+\sqrt{t}e^{ct}) \ll a,
\label{eq:bbaauxintfs3}\end{multline} where for the integral on the right-hand side,
we used the change of variables $v=\log({\frac{u}{\sqrt{2}a}})$; and in
the middle step we used that for $x\geq 1$, $(\log(x^2+1))^{2}\geq
(\log(x))^{2}$.
Now we can use (\ref{eq:esserkhbhdchae}) and the bounds above to
estimate the second term on the right-hand side of equation
(\ref{eq:sI1seraux1}):
\begin{multline}
\int_{1}^{a}\int_{0}^{1} \vert \wt{\psi}_{2}(\wt z) \sum_{m\neq
0} (k_{h}(\wt{z}, \wt{z}+m, t)e^{2\hat{\varphi}(\wt{z}+m)} -
k_{g}(\wt{z}, \wt{z}+m , t))\vert \
dA_{\hat{g}}(\wt z)\\
\ll t^{-1} \int_{1}^{a}\int_{0}^{1} \vert \wt{\psi}_{2}(\wt z)
\sum_{m\neq 0} e^{-{\frac{c_{1}d_{\hat g}^{2}(\wt z, \wt z
+m)}{t}}}\vert dA_{\hat{g}}(\wt z)\\ \ll t^{-1}
e^{-{\frac{c_{2}}{a^{4} t}}} \int_{1}^{a} \sum_{m\neq 0} e^{
-{\frac{c_{1} \log(1 +
{\frac{m^{2}}{2a^{2}}})^{2}}{2t}}}{\frac{dy}{y^{2}}} \ll t^{-1} a
e^{-{\frac{c_{2}}{a^{4} t}}}. \label{eq:auxbisa}
\end{multline}
Let us remark that in equation (\ref{eq:auxbisa}), the right-hand side is a $O(e^{-c/a^{4}t})$ as $t\to 0$ with $c>0$.

Now, let us denote the first term on the right-hand side of equation (\ref{eq:sI1seraux1}) by $\wt{I}_1(t)$.
The heat kernels $k_{h}(\wt{z}, \wt{z}, t)$ and $k_{g}(\wt{z},
\wt{z}, t)$ have a uniform local asymptotic expansion as $t\to 0$ of
the usual form:
\begin{equation}
k_{*}(\wt{z},\wt{z},t) = t^{-1} \sum_{k=0}^{N}
a_{k}(\hat{*},\wt{z})t^{k} + {\mathcal R}_{N}(\hat{*},\wt{z},t), \text{ for any } N\geq 0,
\label{eq:aelhkgh}
\end{equation}
where $* = g, h$. For the remainder terms there is a constant $C>0$
such that
\begin{equation}
\vert {\mathcal R}_{N}(\hat{h},\wt{z},t)\vert \leq C t^{N} \quad \text{and}
\quad \vert {\mathcal R}_{N}(\hat{g},\wt{z},t)\vert \leq C t^{N}
\label{eq:unifbrtlaehklucc}
\end{equation}
independent of $\wt{z}$. Replacing the corresponding expansion in $\wt{I}_1(t)$
we obtain:
\begin{multline} \wt{I}_1(t)=\int_{1}^{a}\int_{0}^{1} \wt{\psi}_{2}(\wt z)
 t^{-1}\left(\sum_{k=0}^{N} a_{k}(\hat h,\wt
z)e^{2\hat{\varphi}(\wt{z})}- a_{k}(\hat g,\wt z)\right)t^{k}  dA_{\hat{g}}(\wt z)\\
  + \int_{1}^{a}\int_{0}^{1} ({\mathcal R}_{N}(\hat
h,\wt z,t)e^{2\hat{\varphi}(\wt{z})}
-{\mathcal R}_{N}(\hat g,\wt z,t))  dA_{\hat{g}}(\wt z). \label{eq:aelhkiv}
\end{multline}
Note that each integral converges separately since the integrands are bounded and the domain has finite area. So, strictly
speaking we do not need to consider relative objects in this part. However, when we take $a=t^{-1/5}$ and we take the limit as
$t\to 0$, the need of considering the relative integral becomes clear.

We estimate the integrals of the remainder terms using equation
(\ref{eq:unifbrtlaehklucc}):
\begin{multline} \left\vert \int_{1}^{a}\int_{0}^{1} \wt{\psi}_{2}(\wt z)
({\mathcal R}_{N}(\hat h,\wt z,t)e^{2\hat{\varphi}(\wt{z})}-{\mathcal R}_{N}(\hat g,\wt z,t)) dA_{\hat{g}}(\wt z)\right\vert\\
\leq  \int_{1}^{a}\int_{0}^{1} (\vert {\mathcal R}_{N}(\hat h,\wt
z,t)e^{2\hat{\varphi}(\wt{z})}\vert + \vert {\mathcal R}_{N}(\hat g,\wt
z,t)\vert) dA_{\hat{g}}(\wt z) \ll  t^{N} \int_{1}^{\infty}
{\frac{dy}{y^{2}}} \ll t^{N},\label{eq:estRt}
\end{multline}
for $0< t \leq 1$. Note that this estimation is independent of $a$. This finishes the proof of Proposition
\ref{prop:casmpexptraoA}.
\end{proof}

\begin{proposition} Let $\varphi\vert_{Z}(z)$, $\Delta_{g}\varphi\vert_{Z}(z)$,
and $\vert \nabla_{g}\varphi\vert_{g} \vert_{Z}(z)$ with $z=(y,x)$,
be $O(y^{-k})$ as $y\to \infty$, with $k\geq 1$. Then For
$0<t\leq 1$, we have:
\begin{equation}\vert I_{2}(t)\vert = \vert \tr(M_{\chi_{Z_{a}}} M_{\psi_{2}} (T^{-1}e^{-t\Delta_{1,h}}T -
e^{-t\Delta_{1,g}})) \vert \ll a^{-k+1/2} t^{-3/2}.\label{eq:bI2itakt}\end{equation}
\label{prop:b1nauxoBaccf}
\end{proposition}
\begin{proof}
To prove Proposition \ref{prop:b1nauxoBaccf} we want to apply
Duhamel's principle on the cusp $Z$. However the heat operators
involved in the trace correspond to Laplacians in the horn
$\wt{Z}$. Therefore in order to make the computations easier, we
first replace them by the heat operators $e^{-t\Delta_{Z,h}}$ and
$e^{-t\Delta_{Z,g}}$ corresponding to the extensions of the Laplacians on the cusps
with respect to Dirichlet boundary conditions.
Then, we apply Duhamel's principle to $e^{-t\Delta_{Z,h}}$ and
$e^{-t\Delta_{Z,g}}$. We have to take into account more terms, but
we avoid the problem of the singularity at $y=0$. Using equations
(\ref{eq:dechkcdc}) and (\ref{eq:dhkdecfcmhc}) to replace the
respective kernels we obtain:
\begin{multline*} \tr( M_{\chi_{Z_{a}}} M_{\psi_{2}} (T^{-1}e^{-t\Delta_{1,h}}T -
e^{-t\Delta_{1,g}})) = \tr(M_{\chi_{Z_{a}}} M_{\psi_{2}}
(T^{-1}e^{-t\Delta_{Z,h}}T - e^{-t\Delta_{Z,g}}))\\ - \int_{M}
\chi_{Z_{a}}(z) \psi_{2}(z) (p_{h,D}(z,z,t) e^{2\varphi(z)}
-p_{1,D}(z,z,t)) dA_{g}(z).\end{multline*} From equation
(\ref{eq:esth}) and $\supp(\psi_{2})=Z_{5/4}$ it follows that:

\begin{multline*}\left\vert \int_{M} \psi_{2}(z) (p_{h,D}(z,z,t)
e^{2\varphi(z)} -p_{1,D}(z,z,t)) dA_{g}(z)\right\vert\\ \ \ll
\int_{Z_{\frac{5}{4}}} t^{-1} y (e^{-{\frac{c d_{h}(z,\partial
Z)}{t}}} + e^{-{\frac{c' d_{g}(z,\partial Z)}{t}}}) dA_{g}(z)
\ll \int_{\frac{5}{4}}^{\infty} t^{-1} y
e^{-{\frac{c_{1}\log(y)^{2}}{t}}} {\frac{dy}{y^{2}}}\\ \leq t^{-1}
e^{-{\frac{c_{1}\log(5/4)^{2}}{2t}}} \int_{\frac{5}{4}}^{\infty}
y^{-1} e^{-{\frac{c_{1}\log(y)^{2}}{2t}}} dy \ll
e^{-{\frac{c_{1}\log(5/4)^{2}}{4t}}}.
\end{multline*}

Let us now continue with the estimation of the trace of the operator:
$$M_{\chi_{Z_{a}}} M_{\psi_{2}} (T^{-1}e^{-t\Delta_{Z,h}}T -
e^{-t\Delta_{Z,g}}).$$ The kernel of $T^{-1}e^{-t\Delta_{Z,h}}T -
e^{-t\Delta_{Z,g}}$ is given by
$$e^{\varphi(z)} K_{Z,h}(z,w,t)e^{\varphi(w)} - K_{Z,g}(z,w,t),$$ and for $z=w$ it takes the form $K_{Z,h}(z,z,t)e^{2\varphi(z)}-K_{Z,g}(z,z,t)$. From the usual form
of Duhamel's principle we infer:
\begin{multline*}
K_{Z,h}(z,w,t)e^{2\varphi(w)} - K_{Z,g}(z,w,t) = \\ \int_{0}^{t}
\int_{M} K_{Z,h}(z,z',s) e^{2\varphi(z')}
(\Delta_{Z,g}-\Delta_{Z,h}) K_{Z,g}(z',w,t-s) dA_{g}(z')\ ds.
\end{multline*}
Then taking $z=w$ in the equation above and using the transformation of the
Laplacian we obtain:
\begin{multline*}
\tr(M_{\chi_{Z_{a}}} M_{\psi_{2}} (T^{-1}e^{-t\Delta_{Z,h}}T -
e^{-t\Delta_{Z,g}}))\\ = \int_{Z_{a}}\psi_{2}(z) \int_{0}^{t} \int_{Z} \left\{K_{Z,h}(z,z',s)
e^{2\varphi(z')}(1-e^{-2\varphi(z')})\right.  \\ \left. \Delta_{Z,g}
K_{Z,g}(z',z,t-s)\right\} dA_{g}(z')\ ds\ dA_{g}(z).
\end{multline*}

Recall that $\supp(\psi_{2})=Z_{5/4}$, let us first assume that
$a > 5/4$, so $4a/5 >1$. Split the integral as the sum of the
following terms:
\begin{enumerate}
\item $J_{1}=\int_{0}^{t}\int_{Z_{a}}\int_{[1,\frac{4a}{5}]\times S^{1}}\ \cdot \
dA_{g}(z')dA_{g}(z)ds$. \item
$J_{2}=\int_{0}^{t/2}\int_{Z_{a}}\int_{Z_{\frac{4a}{5}}}\ \cdot \
dA_{g}(z')dA_{g}(z)ds$.
\item $J_{3}=\int_{t/2}^{t}\int_{Z_{a}}\int_{Z_{\frac{4a}{5}}}\ \cdot \
dA_{g}(z')dA_{g}(z)ds$.
\end{enumerate}

In this part, we only describe the main lines of the proof. The proof of the estimation of each integral
is given in the Appendix. The methods are very similar
to the ones used to prove Theorem \ref{theorem:trchocdacm}.

Let $k\geq 1$ and suppose that $\varphi(y,x)=O(y^{-k})$ as $y\to
\infty$. Then so are $\psi=1-e^{-2\varphi}$ and $\wt{\psi}=e^{2\varphi}-1$.
Thus for $J_{1}$ we have:
\begin{multline}
J_{1} = \int_{0}^{t}\int_{Z_{a}}\int_{[1,\frac{4a}{5}]\times S^{1}}
\psi_{2}(z) (K_{1,h}(z,z',s) +
p_{h,D}(z,z',s))e^{2\varphi(z')}\\
\psi(z') \Delta_{Z,g} (K_{1,g}(z',z,t-s) + p_{1,D}(z',z,t-s)) \
dA_{g}(z')\ dA_{g}(z)\ ds. \label{eq:J1comp} \end{multline}

On this region $a\leq y<\infty$ and $1\leq y' \leq
{\frac{4a}{5}}$. Thus $1<{\frac{5}{4}}\leq {\frac{y}{y'}}$, so
$\log(y/y')$ is bounded away from $0$. Using the estimates of the heat kernels we obtain:
$$\vert J_{1}\vert \ll a e^{-{\frac{c'}{t}}},$$
for some constants $c' >0$.

For $J_{2}$, let us use that the variable $z'\in Z_{\frac{4a}{5}}$
to multiply the inside integral by the characteristic function
$\chi_{Z_{\frac{4a}{5}}}(z')$. Then,
\begin{multline*}
J_{2} = \int_{0}^{t/2}\int_{Z_{a}}\int_{Z_{\frac{4a}{5}}}
\psi_{2}(z) K_{Z,h}(z,z',s) e^{2\varphi(z')}\\
\chi_{Z_{\frac{4a}{5}}}(z')\psi(z') \Delta_{Z,g} K_{Z,g}(z',z,t-s)
dA_{g}(z')dA_{g}(z)ds.
\end{multline*}
Writing this integral in terms of traces of the corresponding
operators we infer:
\begin{multline*}
\vert J_{2}\vert  = \left\vert \int_{0}^{t/2} \tr( M_{\psi_{2}}
e^{-s\Delta_{Z,h}} M_{e^{2\varphi}}  M_{\chi_{Z_{\frac{4a}{5}}}}
M_{\psi} \Delta_{Z,g}e^{-(t-s)\Delta_{Z,g}})ds\right\vert\\
\ll \int_{0}^{t/2} \Vert M_{\chi_{Z_{\frac{4a}{5}}}} M_{\psi}
\Delta_{Z,g}e^{-(t-s)\Delta_{Z,g}}\Vert_{1} ds = \int_{t/2}^{t}
\Vert M_{\chi_{Z_{\frac{4a}{5}}}} M_{\psi}
\Delta_{Z,g}e^{-s\Delta_{Z,g}} \Vert_{1} ds.
\end{multline*}
To obtain a bound, we use a similar method as in Section \ref{subsection:tcp}.
Let $\phi$ be the auxiliary function defined by
equation (\ref{eq:phiaux}) with $\beta=1/2$. Then the trace norm of the operator
$M_{\chi_{Z_{\frac{4a}{5}}}} M_{\psi}
\Delta_{Z,g}e^{-s\Delta_{Z,g}}$ satisfies:
$$\Vert M_{\chi_{Z_{\frac{4a}{5}}}} M_{\psi}
\Delta_{Z,g}e^{-s\Delta_{Z,g}}\Vert_{1} \leq \Vert
M_{\chi_{Z_{\frac{4a}{5}}}} M_{\psi}
\Delta_{Z,g}e^{-s/2\Delta_{Z,g}}M_{\phi}^{-1}\Vert_{2} \Vert
M_{\phi}e^{-s/2\Delta_{Z,g}}\Vert_{2}.$$ The terms on the right-hand side can be estimated in a similar way as before to obtain:
\begin{eqnarray*}
\Vert M_{\chi_{Z_{\frac{4a}{5}}}} M_{\psi}
\Delta_{Z,g}e^{-s/2\Delta_{Z,g}}M_{\phi}^{-1}\Vert_{2} &\ll &
s^{-7/4}(a^{-k} + a^{-k+1/2}),\\
\Vert M_{\phi}
e^{-s/2\Delta_{Z,g}} \Vert_{2} &\ll & s^{-3/4}.
\end{eqnarray*}
It follows that:
$$\vert J_{2}\vert \ll \int_{t/2}^{t} s^{-7/4} (a^{-k} + a^{-k+1/2})\cdot
s^{-3/4} ds \ll a^{-k+1/2} t^{-3/2}.$$

Now, for $J_{3}$ we proceed in a similar way as for $J_{2}$ to obtain:
$$\vert J_{3}\vert \ll \int_{t/2}^{t} a^{-k+1/2}
s^{-7/4} s^{-3/4} ds \ll a^{-k+1/2} t^{-3/2},$$
see the Appendix for all the details. From all the equations above we obtain:
\begin{align*} \vert \tr(M_{\psi_{2}} (T^{-1}e^{-t\Delta_{Z,h}}T -
e^{-t\Delta_{Z,g}})) \vert
\ll a^{-k+1/2} t^{-3/2} + a e^{-c'/t} \ll a^{-k+1/2} t^{-3/2},
\end{align*}
for $0< t <1$.
\end{proof}


\begin{theorem} Let $\nu \geq 1$. Write $z\in Z$ as $z=(y,x)$. Let $\varphi\vert_{Z}(z)$, $\Delta_{g}\varphi\vert_{Z}(z)$,
and $\vert \nabla_{g}\varphi\vert_{g} \vert_{Z}(z)$
be $O(y^{-k})$ as $y\to \infty$ with $k\geq 5\nu +8$.
In addition, if $\nu \geq 3$ we require for $2\leq \ell \leq \nu$ that
$\vert\nabla^{\ell}\varphi\vert_{g} \vert_{Z}(z) = O(y^{-k})$ with $k\geq 5(\nu -2)-1$.
Then under these conditions, there is an expansion of the relative heat trace of the form:
\begin{equation}
\tr(T^{-1}e^{-t\Delta_{h}}T - e^{-t\Delta_{g}}) = t^{-1}\sum_{i =0}^{\nu} a_{i}t^{i}
+ O(t^{\nu}), \text{\ as } t\to 0.
\label{eq:pfexpt0rhtcmg}
\end{equation} \label{theorem:asympexpanrht}
\end{theorem}

\begin{proof}
The argument of the proof started above. To complete the proof we need to put together the proofs of
Proposition \ref{prop:casmpexptraoA} and
\ref{prop:b1nauxoBaccf} in a consistent manner.
First of all, we need to make all our estimates independent of $a$. In particular, the estimate of equation
(\ref{eq:auxbisa}). This particular estimate is going to determine our result. In equation (\ref{eq:auxbisa})
the right-hand side is estimated by $t^{-1}a e^{-\frac{c_{2}}{a^4t}}$. Taking $a=t^{-1/5}$, we get $a^{4}t=t^{1/5}$. Therefore
equation (\ref{eq:auxbisa}) becomes:
\begin{multline*} \int_{1}^{t^{-1/5}}\int_{0}^{1} \vert \wt{\psi}_{2}(\wt z)
\sum_{m\neq 0} (k_{h}(\wt{z}, \wt{z}+m,
t)e^{2\hat{\varphi}(\wt{z}+m)} - k_{g}(\wt{z}, \wt{z}+m , t))\vert \
dA_{\hat{g}}(\wt z)\\
\ll e^{-{\frac{c_{2}}{2t^{1/5}}}}.
\end{multline*}
The next step is to make sure that the asymptotic expansion in equation (\ref{eq:aelhkiv}) is kept
when we pass to the limit as $t\to 0$. Before we continue with the asymptotics of $I_{1}(t)$, let us consider again the
estimate of $I_{2}(t)$ and replace $a=t^{-1/5}$ in equation (\ref{eq:bI2itakt}).
In order to have
\begin{equation} \vert \tr(M_{\psi_{2}} (T^{-1}e^{-t\Delta_{Z,h}}T -
e^{-t\Delta_{Z,g}})) \vert
\ll (t^{-1/5})^{-k+1/2} t^{-3/2}\ll t^{\nu} \label{eq:estI2tnu}
\end{equation}
with $\nu\geq 1/2$ we need that ${\frac{k}{5}} - {\frac{1}{10}}-{\frac{3}{2}} \geq \nu$. Thus, $k$ should satisfy
$k\geq 5\nu + 8$. This condition applies to the conformal factor and its derivatives up to
second order.

Now, let us go back to the asymptotics of $I_1(t)$. Let $\nu\geq 1$. Replacing $a=t^{-1/5}$ in equation
(\ref{eq:aelhkiv}), $\wt{I}_1(t)$ becomes:
\begin{eqnarray}
\wt{I}_1(t)&=& t^{-1} \int_{1}^{t^{-1/5}}\int_{0}^{1} \wt{\psi}_{2}(\wt z)\sum_{j =0}^{\nu}
t^{j} (e^{2\hat{\varphi}}a_{j}(\hat{h},\wt{z})-a_{j}(\hat{g},\wt{z}))dA_{\hat{g}}(\wt z)\label{eq:rhtupa}\\
& & + \int_{1}^{t^{-1/5}}\int_{0}^{1} \wt{\psi}_{2}(\wt z)( e^{2\hat{\varphi}}{\mathcal R}_{\nu}(\hat{h},\wt{z},t) -
{\mathcal R}_{\nu}(\hat{g},\wt{z},t))
dA_{\hat{g}}(\wt z). \notag\end{eqnarray}
The integral of the remainder terms was estimated in equation (\ref{eq:estRt}), independently of $t$ and $a$.
In what follows we set $\wt{\psi}_{2}=1$ and drop the hat in $\hat\varphi$.
To deal with the convergence of the integrals in the first term on the right-hand side in equation (\ref{eq:rhtupa})
we fix $j$ and split each integral as follows:
\begin{multline*}
\int_{1}^{t^{-1/5}}\int_{0}^{1}
(e^{2{\varphi}}a_{j}(\hat{h})-a_{j}(\hat{g}))dA_{\hat{g}}
= \int_{1}^{\infty}\int_{0}^{1} (e^{2{\varphi}}a_{j}(\hat{h})-a_{j}(\hat{g}))dA_{\hat{g}}\\
-\int_{t^{-1/5}}^{\infty}\int_{0}^{1}(e^{2{\varphi}}a_{j}(\hat{h})-a_{j}(\hat{g}))dA_{\hat{g}}
\end{multline*}
Our goal is to prove that for each $j$ the integral over $[1,\infty)\times[0,1]$ converges and that the
integral over $[t^{-\frac15},\infty)\times[0,1]$ can be suitably estimated.

First of all, note that the region of integration $[1,\infty)\times [0,1]$ has finite area respect to both metrics $\hat{g}$ and
$\hat{h}$. Since $\hat g$ is the hyperbolic metric on $\H^2$, the functions $a_{k}(\hat g,\wt z)$ are bounded, therefore integrable.
Let us describe the general picture. Our goal is to prove the following equation:
\begin{eqnarray}
\wt{I}_1(t)&=& t^{-1} \sum_{j =0}^{\nu} t^{j} \int_{1}^{\infty}\int_{0}^{1}
(e^{2{\varphi}}a_{j}(\hat{h},\wt{z})-a_{j}(\hat{g},\wt{z}))dA_{\hat{g}}(\wt z)\notag \\
& &+ \ t^{-1}\sum_{j =0}^{\nu} t^{j} \int_{t^{-1/5}}^{\infty}\int_{0}^{1}
 (e^{2{\varphi}}a_{j}(\hat{h},\wt{z})-a_{j}(\hat{g},\wt{z}))dA_{\hat{g}}(\wt z) + O(t^\nu)\notag \\
&=& \sum_{j=0}^{\nu}(t^{-1}t^{j} \wt{a}_j + O(t^{\nu})) + O(t^{\nu}) =
\sum_{j=0}^{\nu}t^{-1}t^{j} \wt{a}_j  + O(t^{\nu}),\label{eq:ilthtecs}
\end{eqnarray}
where the coefficients $\wt{a}_{j}$ are given by:
$$\wt{a}_{j} = \int_{1}^{\infty}\int_{0}^{1}
(e^{2\hat{\varphi}}a_{j}(\hat{h},\wt{z})-a_{j}(\hat{g},\wt{z})) dA_{\hat{g}}(\wt z).$$

For each $j$ with $0\leq j\leq \nu$ we find conditions on the decay of $\varphi$, on the number of derivatives that should decay,
and on the order of that decay such that the corresponding integral converges or is suitably estimated.
At the end, we impose the strongest condition on $\varphi$ and its derivatives coming from all the terms together.

At each level $j$ (the sub-index of the heat invariant) we assume that $\varphi$ and its derivatives (we will see each time how many
derivatives we need) decay as $y^{-k}$ at infinity, then we find $k$ in terms of $\nu$ and $j$.

Let us proceed with the analysis of the heat invariants. We analyze the convergence and estimation of the integrals simultaneously.

For $a_{0}$ we have:
$$\int_{1}^{\infty}\int_{0}^{1} (e^{2\varphi} -1)dA_{\hat g}= A_{\hat h}([1,\infty)\times [0,1]) - 1$$
and
$$t^{-1}\int_{t^{-1/5}}^{\infty}\int_{0}^{1} \vert e^{2\varphi} -1\vert dA_{\hat g}\ll \int_{t^{-1/5}}^{\infty} y^{-k}
\frac{dy}{y^{-2}} = t^{-1} \frac{1}{k+1} t^{\frac{k+1}{5}}$$
In order to have $t^{-1}t^{\frac{k+1}{5}}\leq t^{\nu}$ we need $\varphi$ to decay as $k\geq 5\nu +4$.

For $a_1$ the integrals are:
\begin{multline*}\int_{1}^{\infty}\int_{0}^{1} (e^{2\varphi}R_{\hat h} -R_{\hat g})dA_{\hat g}=
\int_{1}^{\infty}\int_{0}^{1} ((\Delta_{\hat g}\varphi + R_{\hat g}) -R_{\hat g})dA_{\hat g}\\
=\int_{1}^{\infty}\int_{0}^{1} \Delta_{\hat g}\varphi \ dA_{\hat g} \ll 1
\end{multline*}
and
$$\int_{t^{-1/5}}^{\infty}\int_{0}^{1} \vert e^{2\varphi}\Delta_{\hat g}\varphi\vert dA_{\hat g}\ll \int_{t^{-1/5}}^{\infty} y^{-k}
\frac{dy}{y^{-2}} = \frac{1}{k+1} t^{\frac{k+1}{5}}.$$
Here we need $\Delta_{\hat g}\varphi$ to decay as $k\geq 5\nu -1$.

The second heat invariant $a_2$ is given in \cite{OPS2} as $a_{2} = \frac{\pi}{60}\int_{M}R^{2} dA$. In our case we obtain:
\begin{multline*}\int_{1}^{\infty}\int_{0}^{1} (e^{2\varphi}R_{\hat h}^2 -R_{\hat g}^2)dA_{\hat g}=
\int_{1}^{\infty}\int_{0}^{1} e^{-2\varphi}(\Delta_{\hat g}\varphi + R_{\hat g})^2 -R_{\hat g}^2\ dA_{\hat g}\\
=\int_{1}^{\infty}\int_{0}^{1} e^{-2\varphi}(\Delta_{\hat g}\varphi)^2 + e^{-2\varphi}(\Delta_{\hat g}\varphi)R_{\hat g}
\ dA_{\hat g} \ll 1.
\end{multline*}
For integral over $[t^{-1/5}, \infty)\times [0,1]$ we have:
\begin{multline*}t\int_{t^{-1/5}}^{\infty}\int_{0}^{1} \vert e^{-2\varphi}(\Delta_{\hat g}\varphi)^2 +
e^{-2\varphi}(\Delta_{\hat g}\varphi)R_{\hat g}
\ dA_{\hat g}\vert
\ll \frac{t^{\frac{2k+1}{5}+1}}{2k+1}  + \frac{t^{\frac{k+1}{5}+1}}{k+1} \end{multline*}
The left-hand side is bounded by $t^\nu$ if $\frac{k+1}{5}+1\geq \nu$, i.e if $k\geq 5\nu - 6$. In this case $\nu \geq 2$, and we
need two derivatives. 

Now, let us go one step forward and consider the third heat invariant as it is given in \cite{Sakai}:
$$a_3=\frac{1}{4\pi}\int_{M}-9\vert \nabla R\vert^2 + 4R^3 dA$$
Before we proceed, let us perform some computations:
\begin{eqnarray*}\nabla_{\hat h} R_{\hat h} &=&
-2 e^{-2\varphi}(\Delta_{\hat g}\varphi-1)(\nabla_{\hat h} \varphi) + e^{-2\varphi}\nabla_{\hat h}(\Delta_{\hat g}\varphi)\\
\vert \nabla_{\hat h} R_{\hat h}\vert^2_{\hat h}
&=& 4 e^{-4\varphi}(\Delta_{\hat g}\varphi-1)^2 \vert \nabla_{\hat h} \varphi \vert^2_{\hat h}
 -4e^{-4\varphi}(\Delta_{\hat g}\varphi-1) \langle \nabla_{\hat h} \varphi, \nabla_{\hat h}(\Delta_{\hat g}\varphi)\rangle_{\hat h}\\
& & +\ e^{-4\varphi}\vert \nabla_{\hat h}(\Delta_{\hat g}\varphi)\vert^2_{\hat h}\\
R_{\hat h}^3 &=& e^{-6\varphi}(\Delta_{\hat g}\varphi + R_{\hat g})^3
= e^{-6\varphi}((\Delta_{\hat g}\varphi)^3 - 3(\Delta_{\hat g}\varphi)^2 + 3(\Delta_{\hat g}\varphi) - 1)
\end{eqnarray*}

Plugging the expressions above in the integrals under consideration we obtain:
\begin{multline*}\int_{1}^{\infty}\int_{0}^{1} e^{2\varphi}
(-9\vert \nabla_{\hat h} R_{\hat h}\vert^2_{\hat h} + 4R_{\hat h}^3) - (-9\vert \nabla R_{\hat g}\vert^2 + 4R_{\hat g}^3)
dA_{\hat g}\\
= 4 \int_{1}^{\infty}\int_{0}^{1} e^{-4\varphi}((\Delta_{\hat g}\varphi)^3 - 3(\Delta_{\hat g}\varphi)^2 + 3(\Delta_{\hat g}\varphi))
+ (1-e^{-4\varphi}) \ dA_{\hat g}\\
-9 \int_{1}^{\infty}\int_{0}^{1} e^{-4\varphi}\{
4 (\Delta_{\hat g}\varphi-1)^2 \vert \nabla \varphi \vert^2 - 4 (\Delta_{\hat g}\varphi-1) \langle \nabla \varphi, \nabla(\Delta_{\hat g}\varphi)\rangle + \vert \nabla(\Delta_{\hat g}\varphi)\vert^2\} \ dA_{\hat g}
\end{multline*}
Since $\vert \nabla_{\hat h} \varphi \vert^2_{\hat h}= e^{-2\varphi}\vert \nabla_{\hat g} \varphi \vert^2_{\hat g}$ and we drop the subindice when we consider the metric $\hat g$.
In the first integral of the last equality, all functions decay at infinity.
For convergence of the second integral, it is enough to require boundedness of the integrand, i.e.
$\vert \nabla (\Delta \varphi)\vert \ll 1$.

Now, we estimate the integrals on the region $[t^{-\frac15},\infty)\times [0,1]$.
As above, let us assume that $\vert \nabla^{\ell}_{\hat g}\varphi\vert = O(y^{-k})$, for $0\leq \ell\leq 3$
then $\vert \Delta_{\hat g}\varphi-1\vert\ll 1$,
$e^{-4\varphi}-1=O(y^{-k})$, and
\begin{multline*}t^{2}\int_{t^{-1/5}}^{\infty}\int_{0}^{1} \vert
e^{2\varphi} (-9\vert \nabla R_{\hat h}\vert^2 + 4R_{\hat h}^3) - (-9\vert \nabla R_{\hat g}\vert^2 + 4R_{\hat g}^3)\vert
dA_{\hat g}\\
\ll t^{2}\int_{t^{-1/5}}^{\infty}\int_{0}^{1} \left(
\vert \Delta_{\hat g}\varphi\vert^3 + \vert\Delta_{\hat g}\varphi\vert^2 + \vert\Delta_{\hat g}\varphi\vert
+ \vert1-e^{-4\varphi}\vert \right.\\ \left. +
\vert \nabla \varphi \vert^2 + \vert \nabla \varphi\vert \vert\nabla(\Delta_{\hat g}\varphi)\vert
+ \vert \nabla(\Delta_{\hat g}\varphi)\vert^2\right)
\ dA_{\hat g}\\
\ll
t^{2} \int_{t^{-1/5}}^{\infty} \left( y^{-3k} + y^{-2k} + y^{-k}\right) \ \frac{dy}{y^{2}}\\
= \frac{t^{\frac{3k+1}{5}+2}}{3k+1} + \frac{t^{\frac{2k+1}{5}+2}}{2k+1}  + \frac{t^{\frac{k+1}{5}+2}}{k+1}.
\end{multline*}
In the same way as in the previous case we need that $\frac{k+1}{5}+2\geq \nu$. This is
achieved if $k\geq 5\nu - 11$, ($\nu\geq 3$).

General formulas for the coefficients in the expansion of the heat kernel are very complicated and only known explicitly for
few of them. However, it is known that the functions $a_{k}(\hat h,\wt z)$ are polynomials of degree $2k$ in the scalar curvature
($2R_{\hat h}$) and half powers of the Laplacian. The leading coefficients of this polynomials are described in \cite{OPS2} and in a
more explicit form by Branson, Gilkey and $\O$rsted in \cite{BGO}. We refer to Lemma 1.3 and (1.4) in \cite{BGO}.
$$a_j(\Delta)= \int_{M}(j(j-1)c_j)\vert \nabla^{j-2}R\vert^2 + \text{polynomial}(R,\nabla R,\dots \nabla^{j-3}R),$$
for $j\geq 3$. These are the heat coefficients for a closed Riemann surface (in \cite{BGO} $R$ denotes the scalar curvature).
Applying this to our case, we require at least $\vert\nabla^{j-2}_{\hat h}R_{\hat h}\vert$ to be bounded for $0\leq \ell \leq j-2$. In terms of the conformal factor, this condition translates to $\vert\nabla^{\ell}\varphi \vert \ll 1$ for $2\leq \ell \leq j$.
Under these requirements, the integrals defining the coefficients $\wt{a}_j$ converge.

Now let us estimate the integral over $[t^{-1/5},\infty)\times [0,1]$, assuming that $\vert\nabla^{\ell}\varphi \vert = O(y^{-k})$ for $2\leq \ell \leq j$:
\begin{multline}\int_{t^{-1/5}}^{\infty}\int_{0}^{1} (j(j-1)c_j)(e^{2\varphi}\vert \nabla_{\hat h}^{j-2}R_{\hat h}\vert^2
-\vert \nabla_{\hat g}^{j-2}R_{\hat g}\vert^2)\\ + e^{2\varphi}\text{polynomial}(R_{\hat h},\nabla_{\hat h} R_{\hat h},\dots \nabla_{\hat h}^{j-3}R_{\hat h})\\ - \text{polynomial}(R_{\hat g},\nabla_{\hat g} R_{\hat g},\dots \nabla_{\hat g}^{j-3}R_{\hat g}) \ dA_{\hat{g}}(\wt z)\label{eq:eltj}
\end{multline}

If $j\geq 3$, $\nabla_{\hat g}^{j-2}R_{\hat g}=0$, therefore the leading term is of the form
$\vert \nabla_{\hat h}^{j-2}R_{\hat h}\vert^2 = O(y^{-2k})$.
Now, let us consider the terms involved in the polynomial. For that, we assume that the polynomial is of the form:
$$p_{2j} (x_1, \dots, x_{r})= \sum a_{i_1 \dots i_r}x_1^{i_1}\cdots x_r^{i_r},$$
then we have terms of the form:
$$e^{2\varphi}a_{i_1 \dots i_r}R_{\hat h}^{i_1}(\nabla_{\hat h}R_{\hat h})^{i_2} \cdots (\nabla_{\hat h}^{j-3} R_{\hat h})^{i_r} - a_{i_1 \dots i_r}R_{\hat g}^{i_1}(\nabla_{\hat g}R_{\hat g})^{i_2} \cdots (\nabla_{\hat g}^{j-3} R_{\hat g})^{i_r}$$

If $i_j\neq 0$ for some $j>1$, the second term vanishes. So we are left only with:
$$e^{2\varphi}a_{i_1 \dots i_r}R_{\hat h}^{i_1}(\nabla_{\hat h}R_{\hat h})^{i_2} \cdots (\nabla_{\hat h}^{j-3} R_{\hat h})^{i_r}$$
that involve at least one derivative of $R_{\hat h}$:
$\nabla^{\ell}_{\hat h}R_{\hat h}=(\nabla^{\ell}_{\hat h}(\Delta\varphi + R_{\hat g}))^{i_j}
= (\nabla^{\ell}_{\hat h}(\Delta\varphi))^{i_j} = O(y^{-k \cdot i_j})$.

If $i_j = 0$ for all $j>1$, we have terms of the form:
\begin{eqnarray*}
a_{i_1 \dots i_r}(e^{2\varphi}R_{\hat h}^{i_1}-R_{\hat g}^{i_1}) &=&
a_{i_1 \dots i_r}(e^{2(1-i_1)\varphi}(\Delta \varphi+R_{\hat g})^{i_1}-R_{\hat g}^{i_1})\\
&=& a_{i_1 \dots i_r}(e^{2(1-i_1)\varphi}\sum_{\ell=0}^{i_1} \binom{i_1}{\ell} (\Delta\varphi)^{\ell} (R_{\hat g})^{i_1-\ell}-R_{\hat g}^{i_1})   \\
\vert a_{i_1 \dots i_r}(e^{2\varphi}R_{\hat h}^{i_1}-R_{\hat g}^{i_1})\vert &\ll &
\left(\sum_{\ell=1}^{i_1} y^{-k \ell}\right) + (e^{2(1-i_1)\varphi}-1)R_{\hat g}^{i_1},
\end{eqnarray*}
and recall that $1 - e^{-2\ell \varphi}= O(y^{-k})$. Therefore:
$$t^{j-1}\int_{t^{-1/5}}^{\infty}\int_{0}^{1} (e^{2\varphi}a_{j}(\hat{h})-a_{j}(\hat{g})) dA_{\hat{g}}
\ll t^{j-1}t^{\frac{k+1}{5}},$$
the last term is bounded by $t^{\nu}$ if $k\geq 5\nu - 5(j-1) -1$. 
We have finished the proof of equation (\ref{eq:ilthtecs}).

It is interesting to see how, as we want to have more terms in the expansion, although more derivatives need to be considered,
the conditions on their decay become weaker.
However, this fact does not have any implication on our purposes of defining relative determinants.
We could try to further refine the requirements to minimize conditions on $\varphi$ but that will imply a deeper analysis of the heat
invariants that is beyond the purpose of this article.
\end{proof}

\begin{corollary}
If the conformal factor $\varphi$ and all its derivatives decay at infinity to infinite order,
then there is a complete asymptotic
expansion of the relative heat trace as $t\to 0$:
$$\tr(T^{-1}e^{-t\Delta_{h}}T - e^{-t\Delta_{g}}) = t^{-1}\sum_{j=0}^{\infty}a_j t^j.$$
\end{corollary}

\begin{corollary} Let $h=e^{2\varphi}g$ with $\varphi\vert_{Z}(z)$, $\Delta_{g}\varphi\vert_{Z}(z)$,
and $\vert \nabla_{g}\varphi\vert_{g} \vert_{Z}(z)$
be $O(y^{-k})$ as $y\to \infty$ with $k\geq 11$. Then the relative heat trace has an expansion of the form:
\begin{equation}
\tr(T^{-1}e^{-t\Delta_{h}}T - e^{-t\Delta_{g}}) = a_{0}t^{-1} + a_{1} + O(\sqrt{t})\  \text{\ as } t\to 0.
\label{eq:asymexptghtsqrt}
\end{equation}
We will see in Section \ref{section:relativedet} that this condition is sufficient to define the relative determinant.
\label{cor:aestdrd}
\end{corollary}

\begin{proof}
The condition $k\geq 11$ comes from taking $\nu=1/2$ in equation (\ref{eq:estI2tnu}).
In the part corresponding to $\wt{I}_1(t)$ we take $\nu =1$.
The heat invariants $a_0$ and $a_1$ require $\varphi$ to decay at least as
$k=9$ and $\Delta \varphi$ to decay as $k=4$. The strongest condition is then determined by $I_2$.
\end{proof}

To compute the coefficients in the expansion
(\ref{eq:pfexpt0rhtcmg}) we use that the coefficients in the local
expansion of the heat kernels are given by universal functions. Taking $\nu =2$,
we have that:
\begin{multline}
\tr(T^{-1}e^{-t\Delta_{h}}T - e^{-t\Delta_{g}})=
{\frac{t^{-1}}{4\pi}} (A_{h}-A_{g})\\ + \ t\ \frac{\pi}{60}\left(\int_{M}
R_h^2(z) dA_{h}(z) - \int_{M} R_{g}^{2}(z) dA_{g}(z)\right)
 + O(t^{2}), \text{ as } t\to 0,
\end{multline}
where the constant term vanishes due to Gauss-Bonnet's theorem.
Equation (\ref{eq:asymexptghtsqrt}) becomes:
\begin{equation}
\tr(T^{-1}e^{-t\Delta_{h}}T - e^{-t\Delta_{g}})=
{\frac{t^{-1}}{4\pi}} (A_{h}-A_{g}) + O(\sqrt{t}), \text{ as } t\to 0,
\label{eq:asett0hohahogwcc}
\end{equation}

\subsection{Asymptotics of other relative heat traces.}

Let us consider again surfaces with several cusps. Let $(M,g)$
be a swc of genus $p$ and with $m$ cusps. Assume that $M$
can be decomposed as $M=M_{0}\cup Z_{a_{1}} \cup
\cdots Z_{a_{m}}$, where $a_{i}\geq 1$ for $1\leq i \leq m$. Let $\bar{\Delta}_{a,0}$
be the direct sum $\oplus_{j=1}^m \Delta_{a_{j},0}$ of the Dirichlet
Laplacians $\Delta_{a_{j},0}$ defined in Definition \ref{def:Laplmcuspas}.

Proposition 6.4 in \cite{Mu} establishes that the operator
$e^{-t\Delta_{g}}-e^{-t\bar{\Delta}_{a,0}}$ is trace class and its
trace has the following asymptotic expansion as $t\to 0$:
\begin{multline} \begin{split}\tr(e^{-t\Delta_{g}}-e^{-t\bar{\Delta}_{a,0}}) =
{\frac{A_{g}}{4\pi}} t^{-1} + ({\frac{\gamma m}{2}}+
\sum_{j=1}^{m}\log(a_{j})){\frac{1}{\sqrt{4\pi t}}}\\ + \frac{m}{2}{\frac{\log(t)}{\sqrt{4\pi t}}} + {\frac{\chi(M)}{6}} +{\frac{m}{4}} +
O(\sqrt{t}), \label{eq:asymexptzero} \end{split}\end{multline}
where $\gamma$ is the Euler constant. A close examination of the proof of equation (\ref{eq:asymexptzero})
in \cite{Mu} shows that the term
$\sum_{j=1}^{m}{\frac{\log(a_{j})}{\sqrt{4\pi t}}}$ can be replaced
by $e^{-t/4}\sum_{j=1}^{m} {\frac{\log(a_{j})}{\sqrt{4\pi t}}}$.

In particular, we can consider the relative determinant of the pair
$(\Delta_{g},\bar{\Delta}_{1,0})$. To that purpose we consider the
trace $\tr(e^{-t\Delta_{g}}-e^{-t\bar{\Delta}_{1,0}})$, where the trace is taken
in an extended $L^{2}$ space that is given by:
\begin{multline}L^{2}(M,dA_{g})\oplus \oplus_{j=1}^{m}
L^{2}([1,a_{j}],y^{-2}dy)\\ = L^{2}(M_{0},dA_{g})\oplus
\oplus_{j=1}^{m} (L^{2}_{0}(Z_{a_{j}}) \oplus
L^{2}([1,\infty),y^{-2}dy)). \label{eq:extL2mc&dcch5}
\end{multline} Thus, using Proposition \ref{prop:tcphocahoc1} and
equations (\ref{eq:reltrexspLgL1}) and (\ref{eq:asymexptzero}) we
obtain the following asymptotic expansion as $t\to 0$:
\begin{multline}\begin{split}\tr(e^{-t\Delta_{g}}-e^{-t\bar{\Delta}_{1,0}})
={\frac{A_{g}}{4\pi}}t^{-1} + \frac{\gamma m}{2}{\frac{1}{\sqrt{4\pi t}}} +
\frac{m}{2}{\frac{\log(t)}{\sqrt{4\pi t}}} \\
+ {\frac{\chi(M)}{6}}+{\frac{m}{4}} +
O(\sqrt{t}).\label{eq:asymexptzerovar1}\end{split}
\end{multline}
Together with equation (\ref{eq:asett0hohahogwcc}) this gives:
\begin{multline}\begin{split}
\tr(T^{-1}e^{-t\Delta_{h}}T - e^{-t\bar{\Delta}_{1,0}})=
{\frac{A_{h}}{4\pi}}t^{-1} + {\frac{\gamma
m}{2}}{\frac{1}{\sqrt{4\pi t}}} + \frac{m}{2}{\frac{\log(t)}{\sqrt{4\pi t}}}\\
+ {\frac{\chi(M)}{6}} +{\frac{m}{4}} + O(\sqrt{t}),
\label{eq:asymexptzerohD0}
\end{split}
\end{multline}
where the transformation $T$ is the identity in the space
$\oplus_{j=1}^{m} L^{2}([1,a_{j}],y^{-2}dy)$.


\section{Relative determinants on surfaces with asymptotically cusp ends}
\subsection{Definition}
 \label{section:relativedet}
The relative determinant on a surface with hyperbolic cusps was already
considered by W. M\"uller in \cite{Mu3}. Therefore, we restrict our attention
to the definition and properties of the relative
determinant on asymptotically hyperbolic surfaces.
Let $(M,g)$ be a swc and let $h=e^{2\varphi}g$. In order to define the relative determinant of the pairs
$(\Delta_{h},\Delta_{g})$, and $(\Delta_{h},\Delta_{1,0})$, we need to verify that the conditions given in Section
\ref{subsection:reldet} are satisfied. Let $k\geq 1$, let us define the following set of functions:
\begin{multline*}{\mathcal F}_{k}:=\{\psi \in C^{\infty}(M) \vert \ \psi(z), \vert \nabla_{g} \psi \vert(z) \text{ and }
\Delta_{g}\psi(z)\\ \text{ are } O(i(z)^{-k}) \text{ as } y=i(z)\to
\infty\}.\end{multline*}

Sections \ref{section:2opeinthecusp} and \ref{section:asypexpant0} establish that the
first and second conditions are fulfilled provided that $\varphi \in \cF_1$ and $\varphi \in \cF_{11}$, respectively.

The third condition in Section \ref{subsection:reldet} is about the behavior of the relative heat trace
for big values $t$. The trace class property together with the fact that
$\sigma_{ac}(\Delta_{1,0})=[1/4,\infty)$ and Lemma 2.22 in
\cite{Mu3} give the existence of a constant $C_{1}>0$ such that:
\begin{equation}
\tr(T^{-1}e^{-t\Delta_{h}}T-e^{-t\Delta_{1,0}}) = 1 +
O(e^{-C_{1}t}), \quad \text{ as } t\to \infty,
\label{eq:asymextinfty}
\end{equation}
where the value $1$ on the right-hand side comes from $\dim \ker \Delta_{h} - \dim \ker \Delta_{1,0}$ and
the trace is taken in $L^{2}(M,dA_{g})$. This condition is satisfied even when $\varphi \in \cF_1$.

Let us prove that the condition $\varphi\in \cF_{11}$ suffices
to define the relative determinant of $(\Delta_{h},\Delta_{1,0})$. The
relative zeta function $\zeta(s; \Delta_{h}, \Delta_{1,0})$ converges on
$\Re(s)>1$. It follows
from the asymptotic expansions (\ref{eq:asymexptzerohD0}) and
(\ref{eq:asymextinfty}) that the function $\zeta(s; \Delta_{h},
\Delta_{1,0})$ has a meromorphic continuation to the complex plane,
that it is regular at $s=0$. This continuation is denoted again by $\zeta$.
The proof of the existence of the continuation and regularity at $s=0$ is classical
in the literature. However we include it here to remark that it is enough to have a
truncated asymptotic expansion.

For the sake of simplicity, let us take $m=1$ and let us fix the notation in equation
(\ref{eq:asymexptzerohD0}) above:
\begin{equation*} a_{0} = {\frac{A_{h}}{4\pi}} \quad
a_{10}={\frac{\gamma}{4\sqrt{\pi}}}, \quad a_{11}=
{\frac{1}{4\sqrt{\pi}}}, \quad a_{2}=
{\frac{\chi(M)}{6}}+{\frac{1}{4}}.
\end{equation*}

Now, let us write $\zeta(s; \Delta_{h}, \Delta_{1,0})$ as
$\zeta_{1}(s)+\zeta_{2}(s)$ with
\begin{align*} \zeta_{1}(s) &:= {\frac{1}{\Gamma(s)}}\int_{0}^{1} t^{s-1} (\tr
(T^{-1}e^{-t\Delta_{h}}T-e^{-t\Delta_{1,0}})-1) dt \quad \text{and}\\
\zeta_{2}(s) &:= {\frac{1}{\Gamma(s)}}\int_{1}^{\infty} t^{s-1} (\tr
(T^{-1}e^{-t\Delta_{h}}T-e^{-t\Delta_{1,0}})-1) dt.
\end{align*}

Equation (\ref{eq:asymextinfty}) implies that $\zeta_{2}(s)$ is analytic
at $s=0$. As for $\zeta_{1}(s)$ and $\Re(s)>1$, we have that:
\begin{eqnarray*}
\zeta_{1}(s)&= &{\frac{1}{\Gamma(s)}}\int_{0}^{1} t^{s-1}
(a_{0}t^{-1} + (a_{10}+ a_{11}\log t)t^{-1/2} + a_{2}-1 + \vartheta(t)) dt\\
&=& {\frac{1}{\Gamma(s)}} \left( {\frac{a_{0}}{s-1}} +
{\frac{a_{10}}{s-1/2}} -  {\frac{a_{11}}{(s-1/2)^{2}}}+
{\frac{a_{2}-1}{s}} + \vartheta_{1}(s) \right)_,
\end{eqnarray*}
where $\vartheta(t)=O(\sqrt{t}))$ and $\vartheta_{1}(s)$ is a
function that is analytic at $s=0$.

Therefore, we can define the (regularized) relative determinant of
$(\Delta_{h},{\Delta}_{1,0})$ as in Section \ref{subsection:reldet}:
\begin{equation*}
\det(\Delta_{h}, {\Delta}_{1,0}) =
\exp\left(-\frac{d}{ds}\zeta(s;\Delta_{h}, {\Delta}_{1,0})
\Big|_{s=0}\right).
\end{equation*}
Note that we only need to require that the function $\varphi$ and its derivatives up to order two,
have a decay of order $11$ at infinity. The definition of $\det(\Delta_{h},\Delta_{g})$ is done in the same way.

\subsection{Polyakov's formula for the relative determinant.
Extremals} \label{section:variational}

In \cite{OPS1} the authors proved that on compact surfaces, with and
without boundary and under suitable restrictions, the regularized
determinant of the Laplace operator has an extremum.
In this section we
discuss the gene\-ra\-li\-zation of the extremal property of
determinants given by OPS to certain cases of surfaces with
asymptotically cusp ends.
The main tool to study extremal properties of determinants is
Polyakov's formula that relates the determinant of a given
metric to the determinant of a conformal perturbation of it.
The formula obtained here for relative determinants is the same as the
one for regularized determinants on compact surfaces given in
\cite{OPS1}. The proofs of the variational formula and of Polyakov's
formula follow the main lines of the corresponding proofs in
\cite{OPS1} but we focus in the technical details that allow us to
perform each step in the main proof.

\subsubsection{Polyakov's formula}
In this section we first consider $\varphi, \psi \in {\mathcal F}_{k}$ with $k\geq 11$ and $u \in \R$, let us define the family of metrics:
$$h_{u} := e^{2(\varphi + u\psi)} g = e^{2 u \psi} h.$$
The corresponding Laplace operators and area elements are given by the equations:
$$\Delta_{u} := \Delta_{h_{u}}= e^{-2u\psi}\Delta_{h}, \quad dA_{u}:=dA_{h_{u}}= e^{2u\psi}dA_{h}.$$
Let us consider the family of unitary maps given by:
$$T_{u}:L^{2}(M,dA_{u})\to L^{2}(M,dA_{h}), f\mapsto f e^{u\psi},$$
and the following functional:
\begin{multline*} F: {\mathcal F}_{k} \to {\mathbb C}, \psi \mapsto F_{s}(\varphi + u\psi) :=
\zeta(s; \Delta_{u}, \Delta_{1,0}),\\ \zeta(s; \Delta_{u}, \Delta_{1,0}) = {\frac{1}{\Gamma(s)}}
\int_{0}^{\infty} t^{s-1} (\tr (T_{u}e^{-t\Delta_{u}}T_{u}^{-1}- T
e^{-t\Delta_{1,0}}T^{-1})-1) dt,\end{multline*}
where the trace is taken in $L^{2}(M,dA_{h})$. The variation of $\zeta$ at $\varphi$ in
the direction of $\psi$ is defined as:
$${\frac{\delta\zeta}{\delta \psi}}(s;\Delta_{h},\Delta_{1,0}) :=
\left. {\frac{\partial}{\partial u}} F_{s}(\varphi + u\psi)
\right\vert_{u=0}.$$

In order to proceed with the computation of the derivative in the
equation above, we need the following lemma:

\begin{lemma} $$\left. {\frac{d}{d u}} \tr(T_{u}e^{-t\Delta_{u}}T_{u}^{-1} - Te^{-t\Delta_{1,0}}T^{-1})\right\vert_{u=0} = -t \tr
(\dot{\Delta}_{h} e^{-t\Delta_{h}}),$$ where $\dot{\Delta}_{h}
\equiv \left.{\frac{\partial}{\partial u}}\
\Delta_{u}\right\vert_{u=0}= -2\psi\Delta_{h}$.
\end{lemma}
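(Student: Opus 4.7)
The plan is to differentiate under the trace using the product rule in $u$ together with Duhamel's principle for $\partial_u e^{-t\Delta_u}$, and then exploit cyclicity of the trace to simplify the result. First, the summand $Te^{-t\Delta_{1,0}}T^{-1}$ is manifestly independent of $u$, so its $u$-derivative vanishes. For the remaining piece, since $T_u = M_{e^{u\psi}}$ and $T_u^{-1} = M_{e^{-u\psi}}$ are multiplications, one has $T_0 = \mathrm{Id}$, $\partial_u T_u|_{u=0} = M_\psi$, and $\partial_u T_u^{-1}|_{u=0} = -M_\psi$. Writing $\Delta_u = e^{-2u\psi}\Delta_h$ we also obtain $\dot\Delta_h = -2\psi\Delta_h$. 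Leibniz plus the standard Duhamel identity for a smooth family of nonnegative self-adjoint operators then gives
\begin{equation*}
\partial_u\bigl(T_u e^{-t\Delta_u} T_u^{-1}\bigr)\big|_{u=0} = M_\psi e^{-t\Delta_h} - e^{-t\Delta_h} M_\psi - \int_0^t e^{-s\Delta_h}\, \dot\Delta_h\, e^{-(t-s)\Delta_h}\, ds.
\end{equation*}

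To pass $\partial_u$ through the trace, I would establish a uniform trace-norm bound on the difference quotient in $u$ near $0$, following the scheme of the proof of Theorem \ref{theorem:trchocdacm} applied to the pair $(\Delta_u,\Delta_{1,0})$: the conformal factor $\varphi + u\psi$ lies in $\mathcal{F}_{19}$ uniformly for $|u|$ small, so all the Hilbert--Schmidt and trace-norm estimates from Section \ref{section:trace_class1} depend smoothly on $u$. Once the interchange is justified, cyclicity of the trace collapses the two boundary terms to zero, namely $\tr(M_\psi e^{-t\Delta_h}) = \tr(e^{-t\Delta_h} M_\psi)$. Trace-class membership of $M_\psi e^{-t\Delta_h}$ follows from the factorization
\begin{equation*}
M_\psi e^{-t\Delta_h} = \bigl(M_{\psi\phi^{-1}} e^{-(t/2)\Delta_h}\bigr)\circ\bigl(M_\phi e^{-(t/2)\Delta_h}\bigr),
\end{equation*}
with $\phi$ the auxiliary weight of \eqref{eq:phiaux}; the two factors are Hilbert--Schmidt by the computations in Step 1.2 of the proof of Theorem \ref{theorem:trchocdacm}, the $y^{-19}$ decay of $\psi$ providing far more room than required.

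For the remaining Duhamel integral I would invoke cyclicity once more, combined with the semigroup property $e^{-(t-s)\Delta_h} e^{-s\Delta_h} = e^{-t\Delta_h}$. Since $\dot\Delta_h e^{-(t-s)\Delta_h} = -2M_\psi \Delta_h e^{-(t-s)\Delta_h}$ is trace class (by an analogous two-factor Hilbert--Schmidt decomposition, using the heat-kernel derivative estimate \eqref{eq:estLaplK*} in place of \eqref{eq:estKh}), and $e^{-s\Delta_h}$ is bounded, one obtains
\begin{equation*}
\tr\bigl(e^{-s\Delta_h}\dot\Delta_h e^{-(t-s)\Delta_h}\bigr) = \tr\bigl(\dot\Delta_h e^{-(t-s)\Delta_h} e^{-s\Delta_h}\bigr) = \tr\bigl(\dot\Delta_h e^{-t\Delta_h}\bigr),
\end{equation*}
independent of $s\in(0,t)$. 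Integrating over $[0,t]$ produces the factor $t$ and gives the claimed formula.

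The chief technical obstacle is the trace-class membership of the unbounded combinations $M_\psi \Delta_h e^{-r\Delta_h}$ for $r \in (0,t]$, with a trace-norm bound integrable in $s$. This is precisely where the hypothesis $\psi \in \mathcal{F}_{19}$ is used: the $O(y^{-19})$ decay of $\psi$ (and of $\Delta_g\psi$) absorbs both the cuspidal growth factor $i(z) = y$ in the heat-kernel estimates and the extra power of $r^{-1}$ supplied by $\Delta_h$, making the Hilbert--Schmidt factorization trivially convergent.
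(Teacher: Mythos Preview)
Your approach is essentially the paper's: Duhamel's principle to differentiate the semigroup, then trace cyclicity to kill the extra terms and collapse the $s$-integral. The paper packages things slightly differently by setting $H_u := T_u\Delta_u T_u^{-1}$ and applying Duhamel directly to $e^{-tH_u}$, so that your boundary terms $M_\psi e^{-t\Delta_h} - e^{-t\Delta_h}M_\psi$ reappear instead as the $\psi\Delta_u - \Delta_u\psi$ part of $\dot H_u$; the cancellation is then the identity $\tr(\psi\Delta_h e^{-t\Delta_h}) = \tr(\Delta_h\psi e^{-t\Delta_h})$ rather than $\tr(M_\psi e^{-t\Delta_h}) = \tr(e^{-t\Delta_h}M_\psi)$, but the underlying mechanism is identical.

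One slip: your factorization $M_\psi e^{-t\Delta_h} = \bigl(M_{\psi\phi^{-1}} e^{-(t/2)\Delta_h}\bigr)\circ\bigl(M_\phi e^{-(t/2)\Delta_h}\bigr)$ is wrong as written, since $M_\phi$ does not commute with the heat semigroup. The correct decomposition inserts $M_{\phi^{-1}}M_\phi = \mathrm{Id}$ \emph{between} the two semigroup factors,
\[
M_\psi e^{-t\Delta_h} = \bigl(M_\psi e^{-(t/2)\Delta_h} M_{\phi^{-1}}\bigr)\circ\bigl(M_\phi e^{-(t/2)\Delta_h}\bigr),
\]
exactly as in Lemma~\ref{lemma:tcppsihoh} and Step~1 of the proof of Theorem~\ref{theorem:trchocdacm}. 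With this fix your Hilbert--Schmidt argument goes through.
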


\begin{proof}
Let $H_{u}=T_{u}\Delta_{u}T_{u}^{-1}$. Then $H_{u}$ is a family of
self-adjoint operators acting on $L^{2}(M,dA_{h})$. Note that
$e^{-tH_{u}} = T_{u}e^{-t\Delta_{u}}T_{u}^{-1}$. It is also clear
that:
\begin{equation*}
{\frac{d}{d u}} \tr(T_{u}e^{-t\Delta_{u}}T_{u}^{-1} -
Te^{-t\Delta_{1,0}}T^{-1}) = \tr \left( {\frac{d}{d u}} e^{-t
H_{u}}\right). \end{equation*}
Let $u_{1}, u_{2}>0$, with $u_{1}> u_{2}$. Let us apply Duhamel's
principle in terms of the operators:
$$e^{-t H_{u_{1}}} - e^{-t H_{u_2}} =  \int_{0}^{t} - e^{-s H_{u_{1}}} H_{u_{1}} e^{-(t-s)
H_{u_{2}}} + e^{-s H_{u_{1}}} H_{u_{2}} e^{-(t-s) H_{u_{2}}} \ ds.$$
Dividing by $u_{1}-u_{2}$ the previous equation and letting $u_{2}\to u_{1}$, we obtain:
$$\left.{\frac{d}{d u}}
\ e^{-t H_{u}}\right\vert_{u=u_{1}} = - \int_{0}^{t} e^{-s
H_{u_{1}}} \left({\left.{\frac{d}{d u}} H_{u}
\right\vert_{u=u_{1}}}\right) e^{-(t-s) H_{u_{1}}}\ ds.$$
Therefore we get:
\begin{equation}
{\frac{d}{d u}} \tr(T_{u}e^{-t\Delta_{u}}T_{u}^{-1} -
Te^{-t\Delta_{1,0}}T^{-1}) = -t \tr \left(\dot{H}_{u} e^{-tH_{u}}\right).
\label{eq:difuDpaux1}
\end{equation}
Let us compute the derivative $\dot{H}_{u}$:
\begin{align*}
{\frac{d}{d u}} H_{u} = \psi T_{u}\Delta_{u} T_{u}^{-1} + T_{u}
\left({\frac{d}{d u}} \Delta_{u}\right) T_{u}^{-1} - T_{u}\Delta_{u}
\psi T_{u}^{-1}.
\end{align*}
Thus we get
$$ \tr \left(\dot{H}_{u} e^{-tH_{u}}\right)
= \tr\left(\psi \Delta_{u} e^{-t\Delta_{u}} \right) + \tr\left(
\dot{\Delta}_{u} e^{-t \Delta_{u}}\right) - \tr \left(\Delta_{u}\psi
e^{-t\Delta_{u}}\right).$$

From the rate of decay assumed for $\psi$ and $\Delta_{g}\psi$ we
have that the operators $\psi e^{-t\Delta_{u}}$ and $\Delta_{u}\psi
e^{-t\Delta_{u}}$ are trace class. Using in addition that $e^{-t\Delta_{u}} \Delta_{u}$
is bounded for all $t>0$ we obtain:
\begin{align*}
\tr \left(\Delta_{u}\psi e^{-t\Delta_{u}}\right) = \tr
\left(e^{-{\frac{t}{2}}\Delta_{u}} \Delta_{u}\psi
e^{-{\frac{t}{2}}\Delta_{u}}\right) = \tr \left(\psi
e^{-t\Delta_{u}} \Delta_{u}\right) = \tr \left(\psi \Delta_{u}
e^{-t\Delta_{u}} \right).
\end{align*}
In this way we get:
$$\tr \left(\dot{H}_{u} e^{-tH_{u}}\right) = \tr\left(
\dot{\Delta}_{u} e^{-t \Delta_{u}}\right) = -2\tr\left( \psi
\Delta_{u} e^{-t \Delta_{u}}\right).$$ Taking $u=0$ in the previous
equation together with equation (\ref{eq:difuDpaux1}) implies the
statement of the lemma.
\end{proof}

We are ready to compute the variation of the relative zeta function:
\begin{multline*}
{\frac{\delta\zeta}{\delta\psi}}(s; \Delta_{h}, \Delta_{1,0})\\ =
{\frac{1}{\Gamma(s)}}\int_{0}^{\infty} t^{s-1} \left.{\frac{d}{d u}}
(\tr (T_{u} e^{-t
\Delta_{u}} T_{u}^{-1} - T e^{-t\Delta_{1,0}}T^{-1})-1)\right\vert_{u=0} dt\\
= {\frac{-1}{\Gamma(s)}} \int_{0}^{\infty} t^{s}
\tr((-2\psi{\Delta}_{h}e^{-t\Delta_{h}}) dt = {\frac{-2}{\Gamma(s)}}
\int_{0}^{\infty} t^{s} {\frac {\partial} {\partial t}} \tr(\psi
e^{-t \Delta_{h}}) dt,
\end{multline*}

Since
$${\frac {\partial} {\partial t}} \psi e^{-t \Delta_{h}} = {\frac
{\partial} {\partial t}} \ \psi (e^{-t \Delta_{h}} -
P_{\ker(\Delta_{h})}),$$ we have that
\begin{equation}{\frac{\delta\zeta}{\delta\psi}}(s; \Delta_{h}, \Delta_{1,0}) = {\frac{-2}{\Gamma(s)}}
\int_{0}^{\infty} t^{s} {\frac {\partial} {\partial t}} \tr(\psi
(e^{-t \Delta_{h}} - P_{\ker(\Delta_{h})})) dt.\label{eq:vzfgms1}
\end{equation}

In the classical proof of the variational formula of the spectral zeta function, the next step is to do integration by parts in equation
(\ref{eq:vzfgms1}). Before we do that, we have to verify the good decay of
$\tr(\psi (e^{-t \Delta_{h}} - P_{\ker(\Delta_{h})}))$ for big and small values of $t$.
In addition, we need to make sure that we can obtain an expansion of the trace, for small
values of $t$, whose remainder term can be integrated. We accomplish that in the following two lemmas:

\begin{lemma} There exists a constant $c>0$ such that:
$$\tr(\psi (e^{-t \Delta_{h}}  - P_{\ker(\Delta_{h})})) = O(e^{-ct}), \text{ as } t\to \infty.$$
\label{lemma:daitmrht} \end{lemma}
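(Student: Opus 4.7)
The plan is to exploit the fact that on $\ker(\Delta_{h})^{\perp}$ the Laplacian $\Delta_{h}$ is bounded below by a strictly positive constant $\lambda_{1}$, and then combine this operator-norm estimate with the trace-class property from Lemma \ref{lemma:tcppsihoh}.

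First I would verify that
$$\lambda_{1} \;:=\; \inf\bigl(\sigma(\Delta_{h})\setminus\{0\}\bigr) \;>\; 0.$$
Since $h=e^{2\varphi}g$ with $\varphi\in\mathcal{F}_{19}$, the conformal factor and its first two derivatives decay at infinity, so a standard perturbation argument (e.g.\ Weyl's theorem applied to a relatively compact perturbation of $\Delta_{g}$) shows that the essential spectrum of $\Delta_{h}$ coincides with that of $\Delta_{g}$, namely $[1/4,\infty)$. Below $1/4$ the spectrum consists of at most finitely many eigenvalues of finite multiplicity, so the smallest strictly positive element of the spectrum exists and $\lambda_{1}>0$. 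The spectral theorem then yields
$$\bigl\Vert e^{-s\Delta_{h}} - P_{\ker(\Delta_{h})}\bigr\Vert \;\leq\; e^{-s\lambda_{1}}, \qquad s\geq 0,$$
in operator norm.

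Next I would use the semigroup identity $e^{-t\Delta_{h}}=e^{-(t-1)\Delta_{h}}\circ e^{-\Delta_{h}}$ and the relations $e^{-s\Delta_{h}}P_{\ker(\Delta_{h})}=P_{\ker(\Delta_{h})}e^{-s\Delta_{h}}=P_{\ker(\Delta_{h})}$ to factor, for $t\geq 1$,
$$\psi\bigl(e^{-t\Delta_{h}} - P_{\ker(\Delta_{h})}\bigr) \;=\; \bigl(\psi\, e^{-\Delta_{h}}\bigr)\,\bigl(e^{-(t-1)\Delta_{h}} - P_{\ker(\Delta_{h})}\bigr).$$
By Lemma \ref{lemma:tcppsihoh}, the operator $\psi\, e^{-\Delta_{h}}$ is trace class; let $C_{0}:=\Vert \psi\, e^{-\Delta_{h}}\Vert_{1}$. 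Using the standard inequality $\vert \tr(AB)\vert \leq \Vert A\Vert_{1}\Vert B\Vert$ together with the spectral bound above,
$$\bigl\vert \tr\bigl(\psi(e^{-t\Delta_{h}} - P_{\ker(\Delta_{h})})\bigr)\bigr\vert \;\leq\; C_{0}\, e^{-(t-1)\lambda_{1}} \;=\; O\bigl(e^{-\lambda_{1} t}\bigr),$$
which gives the claim on $[1,\infty)$ with $c=\lambda_{1}$. For $0<t\leq 1$ the function $\tr(\psi(e^{-t\Delta_{h}}-P_{\ker(\Delta_{h})}))$ is continuous and thus bounded, so the estimate $O(e^{-ct})$ holds after adjusting the constant.

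The main (only) obstacle is the assertion $\lambda_{1}>0$, i.e.\ that the essential spectrum does not reach $0$ and that eigenvalues do not accumulate at $0$. The decay hypotheses built into $\mathcal{F}_{19}$ comfortably imply that $\Delta_{h}-\Delta_{g}$ is a decaying perturbation of the model cusp Laplacian, so the essential spectrum is $[1/4,\infty)$, after which the discreteness of the spectrum in $[0,1/4)$ is automatic. Once this is in hand, the rest of the argument is the short two-line operator-norm/trace-norm factorization indicated above.
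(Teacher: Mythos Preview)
Your proposal is correct and follows essentially the same route as the paper: factor $\psi(e^{-t\Delta_h}-P_{\ker(\Delta_h)})$ as a trace-class piece $\psi e^{-c\Delta_h}$ (via Lemma~\ref{lemma:tcppsihoh}) times a bounded piece whose operator norm decays like $e^{-\lambda_1 t}$, using that $\sigma_{\mathrm{ess}}(\Delta_h)=[1/4,\infty)$ forces a strictly positive spectral gap above $0$. The only cosmetic difference is the choice of split point ($t=1$ versus the paper's $t=1/2$); your remark about $0<t\le 1$ is unnecessary for an asymptotic statement as $t\to\infty$ but harmless.
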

\begin{proof} Let $t>1$ and let us write:
\begin{align*}
\psi (e^{-t \Delta_{h}} - P_{\ker(\Delta_{h})}) = \psi
e^{-{\frac{1}{2}} \Delta_{h}} (e^{-(t-{\frac{1}{2}}) \Delta_{h}} -
P_{\ker(\Delta_{h})}),
\end{align*}
where we used that $e^{-{\frac{1}{2}}
\Delta_{h}}P_{\ker(\Delta_{h})} = P_{\ker(\Delta_{h})}$. By
Corollary \ref{lemma:tcppsihoh} we have that $\psi e^{-{\frac{1}{2}}
\Delta_{h}}$ is trace class. On the other hand, for $f\in L^{2}(M,dA_{h})$ the spectral
theorem implies that:
$$e^{-t \Delta_{h}}f - P_{\ker(\Delta_{h})}f = e^{-t(\Delta_{h}-P_{\ker(\Delta_{h})})}f.$$
Note that $\sigma_{\text{ess}}(\Delta_{h})= [1/4,\infty)$ implies
that $0$ is an isolated eigenvalue of $\Delta_{h}$ and
$\sigma(\Delta_{h}-P_{\ker(\Delta_{h})})\subseteq [c_{1},\infty)$
for some $c_{1}\in (0,1/4]$. Thus $$\Vert e^{-t
(\Delta_{h}-P_{\ker(\Delta_{h})})}\Vert_{L^{2}(M,h)}\leq
e^{-c_{1}t}$$ for any $t>0$. If $t>1$, $t-{\frac{1}{2}}>0$; therefore
the trace satisfies the desired estimate:
\begin{multline*}
\vert \tr(\psi (e^{-t \Delta_{h}} - P_{\ker(\Delta_{h})}))\vert
\leq
\Vert \psi e^{-{\frac{1}{2}} \Delta_{h}} (e^{-(t-{\frac{1}{2}}) \Delta_{h}} - P_{\ker(\Delta_{h})})\Vert_{1}\\
\leq \Vert \psi e^{-{\frac{1}{2}} \Delta_{h}} \Vert_{1} \Vert
e^{-(t-{\frac{1}{2}})(\Delta_{h}-P_{\ker(\Delta_{h})})}\Vert_{L^{2}(M,h)}
\ll e^{-c_{1}t}.
\end{multline*}
This proves Lemma \ref{lemma:daitmrht}.\end{proof}

\begin{lemma} For $0<t\leq 1$ the trace of the operator
$\psi(e^{-t\Delta_{h}}-P_{\ker(\Delta_{h})})$ has the following
expansion:
\begin{equation*}
\tr(\psi(e^{-t\Delta_{h}}-P_{\ker(\Delta_{h})})) = \int_{M} \psi(z)
\left({\frac{1}{4\pi t}} + {\frac{R_{h}(z)}{12\pi}} -
{\frac{1}{A_{h}}}\right) \ dA_{h} + O(t)
\end{equation*}
as $t\to 0$. \label{lemma:etphohmptgt0}
\end{lemma}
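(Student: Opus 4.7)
The plan is first to identify $P_{\ker(\Delta_{h})}$ explicitly. Since $(M,h)$ is complete with finite area and $\sigma_{\text{ess}}(\Delta_{h}) = [1/4, \infty)$, the eigenvalue $0$ is isolated and simple with eigenspace $\R\cdot 1$; hence $P_{\ker(\Delta_{h})}$ has integral kernel $A_h^{-1}$ in $L^{2}(M, dA_h)$. Together with Lemma \ref{lemma:tcppsihoh}, this reduces the claim to an asymptotic expansion of
\[
I(t) := \int_M \psi(z)\bigl(K_h(z,z,t) - A_h^{-1}\bigr)\, dA_h(z)
\]
as $t \to 0^+$.

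Next I would replace $K_h$ by the parametrix $Q_h$ constructed in Section \ref{section:asypexpant0}. By Lemma \ref{lemma:paramhkcm3} and the boundedness of $\psi$,
\[
\left|\int_M \psi(z)\bigl(K_h - Q_h\bigr)(z,z,t)\, dA_h(z)\right| \ll e^{-c/t}.
\]
Since $\psi_1 + \psi_2 = 1$ on $M$ and $\varphi_i \equiv 1$ on $\supp \psi_i$, one has $Q_h(z,z,t) = \psi_1(z) K_{W,h}(z,z,t) + \psi_2(z) K_{1,h}(z,z,t)$. For the compact piece, $\psi_1$ is supported in $M_2$, and the standard Minakshisundaram--Pleijel expansion on the closed manifold $W$ gives
\[
K_{W,h}(z,z,t) = \frac{1}{4\pi t} + \frac{K_h(z)}{12\pi} + O(t)
\]
uniformly on $M_2$. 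Integrating against $\psi\psi_1$ produces the compact contribution to the claimed formula modulo $O(t)$.

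The delicate step is the cusp term $\int_Z \psi(z)\psi_2(z) K_{1,h}(z,z,t)\, dA_h(z)$, which I would handle in the spirit of Proposition \ref{prop:casmpexptraoA}. Split $Z = Z_a' \cup Z_a$ with $a = t^{-1/9}$. On $Z_a'$, lift to the universal cover $(\hat Z, \hat h)$: because this cover has bounded geometry, the on-diagonal heat kernel $k_h(\tilde z, \tilde z, t)$ enjoys the same uniform local expansion $(4\pi t)^{-1} + K_h(\tilde z)/(12\pi) + O(t)$, while the contribution of nontrivial deck transformations is $O(e^{-c/t^{1/9}})$ by the estimate derived in Proposition \ref{prop:casmpexptraoA}. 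On the remote part $Z_a$, the uniform bound $K_{1,h}(z,z,t) \ll t^{-1}$ combined with the decay $\psi \in {\mathcal F}_{19}$, i.e.\ $|\psi(z)| \ll y^{-19}$, yields
\[
\int_{Z_a} |\psi|\, K_{1,h}(z,z,t)\, dA_h \ll t^{-1}\int_a^\infty y^{-20}\, dy \ll t^{-1}a^{-19} = t^{10/9} = O(t),
\]
and similarly $\int_{Z_a} |\psi|\, A_h^{-1}\, dA_h \ll a^{-19} = O(t^{19/9})$. Summing the compact and the two cusp pieces, and using $\psi_1+\psi_2=1$ to recombine the integrals into integrals over all of $M$, gives
\[
I(t) = \int_M \psi(z)\left(\frac{1}{4\pi t} + \frac{K_h(z)}{12\pi} - \frac{1}{A_h}\right)\, dA_h(z) + O(t),
\]
as required. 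The main obstacle is ensuring that the $O(t)$ remainder in the on-diagonal heat expansion is uniform over the non-compact cusp, so that it integrates to $O(t)$; this is precisely where the bounded geometry of $(\hat Z, \hat h)$ together with the rapid decay $\psi \in {\mathcal F}_{19}$ (ensuring the integrability of the pointwise remainder) is essential.
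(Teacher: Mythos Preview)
Your proposal is correct and follows essentially the same approach as the paper: replace $K_h$ by the parametrix $Q_h$ via Lemma~\ref{lemma:paramhkcm3}, treat the compact piece with the Minakshisundaram--Pleijel expansion on $W$, and on the cusp lift to the universal cover $(\hat Z,\hat h)$, using bounded geometry for the diagonal term, the off-diagonal estimate from Proposition~\ref{prop:casmpexptraoA} on $Z_a'$, and the decay $\psi\in\mathcal F_{19}$ to kill the far piece $Z_a$ with $a=t^{-1/9}$. One small slip: the on-diagonal bound on the cusp is $K_{1,h}(z,z,t)\ll y\,t^{-1}$ (cf.\ \eqref{eq:estKh}, reflecting the shrinking injectivity radius), not $\ll t^{-1}$; your displayed integral $t^{-1}\int_a^\infty y^{-20}\,dy$ is nevertheless the correct one once this factor of $y$ is included, and it coincides with the paper's estimate for $\widetilde J_3$.
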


\begin{proof}
In order to prove Lemma \ref{lemma:etphohmptgt0} we use a method
similar to the one used in Section \ref{subsection:expanstrace1} to
prove the existence of the expansion of the relative heat trace
$\tr(e^{-t\Delta_{h}}-e^{-t\Delta_{g}})$ for small $t$. We start by
considering the parametrix kernel $Q_{h}(z,z',t)$ defined by
equation (\ref{eq:parametrixhchas}):
$$Q_{h}(z,w,t) = \varphi_{1}(z)K_{W,h}(z,w,t)\psi_{1}(w) + \varphi_{2}(z)K_{1,h}(z,w,t)\psi_{2}(w),$$
where the functions $\varphi_{i}$ and $\psi_{i}$, $i=1,2$, are
defined in Section \ref{subsection:expanstrace1}.
From Lemma \ref{lemma:paramhkcm3}, we can restrict our attention to $\int_{M} \psi(z)
(Q_{h}(z,z,t) -{\frac{1}{A_{h}}}) dA_{h}(z)$ and split the integral
as the sum of the following two terms:
\begin{align*}
L_{1}(t) &= \int_{M_{2}} \psi(z) \psi_{1}(z) (K_{W,h}(z,z,t)-{\frac{1}{A_{h}}}) dA_{h}(z)\\
L_{2}(t) &= \int_{Z_{\frac{5}{4}}} \psi(z) \psi_{2}(z)
(K_{1,h}(z,z,t)-{\frac{1}{A_{h}}})dA_{h}(z).
\end{align*}
Using the asymptotic expansion of the kernel
$K_{W,h}(z,z,t)$ we obtain:
\begin{multline}
L_{1}(t) = \int_{M_{2}} \psi(z) \psi_{1}(z) \left({\frac{1}{4\pi t}} +
{\frac{R_{h}(z)}{12\pi}} - {\frac{1}{A_{h}}} + \cR_{1}(z,t)\right)
dA_{h}(z). \label{eq:tpshkWmpkM2}\end{multline}

For $L_{2}(t)$, we use the same construction and notation as in the proof of
Proposition \ref{prop:casmpexptraoA}. Now, let $a>5/4$ and let us
split the integral $L_{2}(t)$ as the sum $L_{2} = \wt{J}_{1}(t) +
\wt{J}_{2}(t) + \wt{J}_{3}(t)$, where the $\wt{J}_{i}$, $i=1,2,3$, are given by:
\begin{align*}
\wt{J}_{1}(t) &= \int_{{\frac{5}{4}}}^{\infty}\int_{0}^{1}
\hat{\psi}(\wt z)
\hat{\psi}_{2}(\wt z)(k_{h}(\wt z,\wt z,t)-{\frac{1}{A_{h}}})dA_{\hat{h}}(\wt z),\\
\wt{J}_{2}(t) &= \int_{{\frac{5}{4}}}^{a}\int_{0}^{1} \hat{\psi}(\wt
z) \hat{\psi}_{2}(\wt z)
\sum_{m\neq 0} k_{h}(\wt{z},\wt{z}+m,t) dA_{\hat{h}}(\wt z),\\
\wt{J}_{3}(t) &= \int_{a}^{\infty}\int_{0}^{1} \hat{\psi}(\wt z)
\hat{\psi}_{2}(\wt z) \sum_{m\neq 0} k_{h}(\wt{z},\wt{z}+m,t)
dA_{\hat{h}}(\wt z).
\end{align*}
For $\wt{J}_{1}$ we use the local asymptotic expansion of the heat kernel
$k_{h}(\wt z,\wt z,t)$, whose remainder term is uniformly bounded, see \cite{ChGT}:
\begin{equation}
\wt{J}_{1}(t) = \int_{{\frac{5}{4}}}^{\infty}\int_{0}^{1}
\hat{\psi}(\wt z) \hat{\psi}_{2}(\wt z) \left({\frac{1}{4\pi t}} +
{\frac{R_{\hat{h}}(\wt z)}{12\pi}} - {\frac{1}{A_{h}}} + \cR_{1,1}(\wt
z,t)\right) dA_{\hat{h}}(\wt z) \label{eq:tpshklecmpkJ1}
\end{equation}
For $\wt{J}_{2}(t)$, in the same way as in the proof of
Proposition \ref{prop:casmpexptraoA}, we can estimate the series
as in equation (\ref{eq:bshkmnzwra1}).
Then we estimate the integral in the same way as in equations (\ref{eq:bbaauxintfs3}) and (\ref{eq:auxbisa}):
\begin{multline}
\wt{J}_{2}(t) \ll \int_{{\frac{5}{4}}}^{a} y^{-11} e^{ -{\frac
{c_{2}}{a^{4} t}}} \sum_{m\neq 0}e^{-{\frac{c_{1} \log(1 +
{\frac{m^{2}}{2a^{2}}})^{2}}{2t}}} {\frac{dy}{y^{2}}}\\ \ll
e^{-{\frac {c_{2}}{a^{4} t}}} \int_{{\frac{5}{4}}}^{a} y^{-11}
\int_{1}^{\infty} e^{-{\frac{c_{1} \log(1 +
{\frac{u^{2}}{2a^{2}}})^{2}}{2t}}} du \ {\frac{dy}{y^{2}}} \ll a
e^{-{\frac {c_{2}}{a^{4} t}}}. \label{eq:L2J2}
\end{multline}
The integral $\wt{J}_{3}$ can be bounded as:
\begin{multline}\wt{J}_{3}(t) \leq \int_{Z_{a}} \psi(z) \psi_{2}(z) K_{1,h}(z,z,t) dA_{h}(z) \\
\ll t^{-1} \int_{a}^{\infty} y^{-12} dy \ll t^{-1} a^{-11}.
\label{eq:L2J3}\end{multline}

Taking $a=t^{-1/5}$ in the same way as we did in the proof of Theorem \ref{theorem:asympexpanrht} and putting equations
(\ref{eq:tpshkWmpkM2}) (\ref{eq:tpshklecmpkJ1}) (\ref{eq:L2J2}) and
(\ref{eq:L2J3}) together we obtain:
\begin{multline*}
\tr(\psi(e^{-t\Delta_{h}}-P_{\ker(\Delta_{h})}))\\
= \int_{M_{2}} \psi(z) \psi_{1}(z) \left({\frac{1}{4\pi t}} +
{\frac{R_{h}(z)}{12\pi}} - {\frac{1}{A_{h}}} + \cR_{1}(z,t)\right)
dA_{h}(z)\\ + \int_{{\frac{5}{4}}}^{\infty}\int_{0}^{1}
\hat{\psi}(\wt z) \hat{\psi}_{2}(\wt z) \left({\frac{1}{4\pi t}} +
{\frac{R_{\hat{h}}(\wt z)}{12\pi}} - {\frac{1}{A_{h}}} + \cR_{1,1}(\wt
z,t)\right) dA_{\hat{h}}(\wt z) +O(t),
\end{multline*}
where $O(t)$ is clearly independent of $z$. Now we know that
$\vert \cR_{1}(z,t)\vert \ll t$ and $\vert \cR_{1,1}(\wt z,t) \vert \ll
t$ uniformly in $z$. Therefore we can make the following estimate:
$$\int_{M_{2}} \psi(z) \psi_{1}(z) \cR_{1}(z,t) dA_{h}(z) + \int_{{\frac{5}{4}}}^{\infty}\int_{0}^{1}
\hat{\psi}(\wt z) \hat{\psi}_{2}(\wt z) \cR_{1,1}(\wt z,t)
dA_{\hat{h}}(\wt z) \ll t.$$
This finishes the proof of Lemma \ref{lemma:etphohmptgt0}.
\end{proof}

The rest of the proof now follows the same lines as in \cite{OPS1}. Let us mention the main steps of it.
Going back to the variation of the relative zeta function, we may now
apply integration by parts in equation (\ref{eq:vzfgms1}) to obtain
for $\Re(s)>0$:

$${\frac{\delta\zeta}{\delta\psi}}(s; \Delta_{h}, \Delta_{1,0}) = {\frac{2s}{\Gamma(s)}}
\int_{0}^{\infty} t^{s-1} \tr(\psi (e^{-t \Delta_{h}} -
P_{\ker(\Delta_{h})})) dt.$$

We split this integral as:
\begin{multline} {\frac{\delta\zeta}{\delta\psi}}(s; \Delta_{h},
\Delta_{1,0}) = {\frac{2s}{\Gamma(s)}}\left( \int_{0}^{1}
 t^{s-1} \tr(\psi (e^{-t \Delta_{h}} -
P_{\ker(\Delta_{h})})) dt\right. \\ \left.
 +\int_{1}^{\infty}  t^{s-1} \tr(\psi (e^{-t \Delta_{h}} -
P_{\ker(\Delta_{h})})) dt \right).\label{eq:auxsepzetaf}
\end{multline}

From Lemma \ref{lemma:daitmrht}, the integral in second term on the right-hand side of equation
(\ref{eq:auxsepzetaf}) is an entire function of $s$. Since $\Gamma(s)^{-1}\sim s$,
it follows that:
$$\frac{d}{ds} \ {\frac{2s}{\Gamma(s)}}\int_{1}^{\infty} t^{s-1} \tr(\psi (e^{-t \Delta_{h}} -
P_{\ker(\Delta_{h})})) dt \  \Big|_{s=0} = 0$$

Using Lemma \ref{lemma:etphohmptgt0}, the first term on the right-hand side of (\ref{eq:auxsepzetaf}),
becomes:
\begin{multline*}
{\frac{2s}{\Gamma(s)}} \int_{0}^{1}
 t^{s-1} \tr(\psi (e^{-t \Delta_{h}} -
P_{\ker(\Delta_{h})})) dt\\
= {\frac{2s}{\Gamma(s)}} \left\{{\frac{1}{s}} \int_{M} \psi(z)
({\frac{R_{h}(z)}{12\pi}} - {\frac{1}{A_{h}}})dA_{h} + \mbox{
analytic in $s$ near $0$} \right\}.
\end{multline*}
The next step is to take the derivative with respect to $s$ at $s=0$. Using ${\frac{1}{\Gamma(s)}} = s + O(s^{2})$, we have:
\begin{multline*}
{\frac{d}{ds}} {\frac{2s}{\Gamma(s)}} \int_{0}^{1}
 t^{s-1} \tr(\psi (e^{-t \Delta_{h}} -
P_{\ker(\Delta_{h})})) dt \Big|_{s=0} \\
= \int_{M} 2\psi(z) \left({\frac{R_{h}(z)}{12 \pi}} -
{\frac{1}{A_{h}}}\right)dA_{h}.
\end{multline*}
Thus,
\begin{multline}{\frac{\delta}{\delta\psi}} \log \det(\Delta_{h},\Delta_{1,0}) =
-{\frac{\delta}{\delta\psi}} \frac{d}{ds}\zeta(s;\Delta_{h},
{\Delta}_{0})\big|_{s=0}\\
= {-\frac{1}{6\pi}} \int_{M} \psi (\Delta_{g}\varphi + R_{g})\
dA_{g} + {\frac{\delta}{\delta\psi}} \log A_{h}.\label{eq:vlrd}
\end{multline}
Finally, it is very easy to show that any $\psi$ in the domain of $F$ satisfies:
\begin{eqnarray*}{\frac{1}{2}}\left.{\frac{\partial}{\partial u}} \int_{M} \vert \nabla_{g}(\varphi + u \psi) \vert^{2} \
dA_{g} \right\vert _{u=0}
&=&
\langle \psi, \Delta_{g}\varphi\rangle,\\
\left.{\frac{\partial}{\partial u}} \int_{M}  R_{g}\ (\varphi+u\psi)
\ dA_{g}\right\vert _{u=0} &=& \int_{M}  R_{g}\ \psi \ dA_{g},
\end{eqnarray*}
Integrating (\ref{eq:vlrd}) we obtain:
\begin{equation*} \log \det(\Delta_{h},\Delta_{1,0})
= {-\frac{1}{12\pi}} \int_{M} \vert \nabla_{g}\varphi\vert^{2} \
dA_{g} - {\frac{1}{6\pi}} \int_{M}  R_{g}\ \varphi \ dA_{g} +  \log
A_{h} + C.
\end{equation*}
Notice that if $\varphi = 0$, $\Delta_{h} = \Delta_{g}$. Therefore
the last equation implies $C = \log \det(\Delta_{g},\Delta_{1,0})$. In this way, we have proved
Polyakov's formula:
\begin{theorem} Let $(M,g)$ be a surface with cusps and let
$h=e^{2\varphi}g$ be a conformal transformation of $g$ with $\varphi
\in {\mathcal F}_{11}$. For the corresponding relative determinants
we have the following formula:
\begin{multline} \log \det(\Delta_{h},\Delta_{1,0})
= {-\frac{1}{12\pi}} \int_{M} \vert \nabla_{g}\varphi\vert^{2} \
dA_{g} - {\frac{1}{6\pi}} \int_{M}  R_{g}\ \varphi \ dA_{g}\\ +  \log
A_{h} + \log \det(\Delta_{g},\Delta_{1,0}).
\end{multline} \label{theorem:Polyakovf}
\end{theorem}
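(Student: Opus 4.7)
The plan is to differentiate $\log\det(\Delta_h,\Delta_{1,0})$ along a one-parameter family of conformal perturbations and then integrate along a straight-line path from $g$ to $h$. Fix $\varphi,\psi \in \mathcal{F}_{19}$, set $h_u = e^{2u\psi}h$, and consider the relative zeta function $F_s(\varphi+u\psi) := \zeta(s;\Delta_u,\Delta_{1,0})$ defined via the unitary intertwiners $T_u$ so that all heat operators act on the fixed space $L^2(M,dA_h)$. The first step is to compute $\partial_u|_{u=0} F_s(\varphi + u\psi)$; following Duhamel applied to $H_u = T_u \Delta_u T_u^{-1}$ and using the trace class property of $\psi\,e^{-t\Delta_h}$, one obtains
\begin{equation*}
\tfrac{d}{du}\tr(T_u e^{-t\Delta_u}T_u^{-1} - T e^{-t\Delta_{1,0}}T^{-1})\big|_{u=0} = -t\,\tr(\dot\Delta_h e^{-t\Delta_h}) = 2t\,\tr(\psi\Delta_h e^{-t\Delta_h}).
\end{equation*}

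Next I would rewrite this as $-2t\,\partial_t \tr(\psi(e^{-t\Delta_h}-P_{\ker\Delta_h}))$ and integrate by parts in $t$ inside the Mellin transform to get, for $\operatorname{Re}(s)>0$,
\begin{equation*}
\tfrac{\delta\zeta}{\delta\psi}(s;\Delta_h,\Delta_{1,0}) = \tfrac{2s}{\Gamma(s)} \int_0^\infty t^{s-1}\,\tr(\psi(e^{-t\Delta_h}-P_{\ker\Delta_h}))\,dt.
\end{equation*}
The boundary terms vanish because of the decay estimate $\tr(\psi(e^{-t\Delta_h}-P_{\ker\Delta_h})) = O(e^{-ct})$ as $t\to\infty$ (Lemma \ref{lemma:daitmrht}) and the small-$t$ expansion $\int_M \psi(\tfrac{1}{4\pi t}+\tfrac{K_h}{12\pi}-\tfrac{1}{A_h})\,dA_h + O(t)$ (Lemma \ref{lemma:etphohmptgt0}). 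I split the Mellin integral at $t=1$: the tail is entire in $s$ and its prefactor $2s/\Gamma(s)$ kills its derivative at zero, while the short-time piece produces a simple pole $1/s$ whose residue, when multiplied by $2s/\Gamma(s)$ and differentiated at $s=0$, yields $2\int_M \psi\bigl(\tfrac{K_h}{12\pi}-\tfrac{1}{A_h}\bigr)dA_h$.

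Using $K_h\,dA_h = (\Delta_g\varphi + K_g)\,dA_g$ this gives
\begin{equation*}
\tfrac{\delta}{\delta\psi}\log\det(\Delta_h,\Delta_{1,0}) = -\tfrac{1}{6\pi}\int_M \psi(\Delta_g\varphi+K_g)\,dA_g + \tfrac{\delta}{\delta\psi}\log A_h.
\end{equation*}
Recognizing the first term as the variation in the $\psi$-direction of the Dirichlet-type functional $\tfrac{1}{2}\int_M|\nabla_g\varphi|^2 dA_g$ (through $\langle\psi,\Delta_g\varphi\rangle$) and of $\int_M K_g\varphi\,dA_g$, I would then integrate along the path $u\mapsto u\varphi$ from $u=0$ (where $\Delta_{h_0}=\Delta_g$) to $u=1$: the boundary value at $u=0$ provides the constant $\log\det(\Delta_g,\Delta_{1,0})$, giving the claimed identity.

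The main obstacle is justifying the interchange of $\partial_u$, $\partial_t$, the trace, and the Mellin integral; this rests on (i) the trace-class property of $\psi\,e^{-t\Delta_h}$ for all $t>0$, proven by the HS factorization through $M_\phi$ used throughout Section \ref{section:trace_class1}, (ii) the uniform control of the remainder in the small-$t$ expansion of $\tr(\psi(e^{-t\Delta_h}-P_{\ker\Delta_h}))$, which is why the hypothesis $\varphi\in\mathcal F_{19}$ is crucial, and (iii) the spectral gap argument using $\sigma_{\mathrm{ess}}(\Delta_h) = [1/4,\infty)$ to obtain the exponential decay at infinity. Once these three ingredients are in hand, the remainder of the proof is the OPS computation transposed verbatim.
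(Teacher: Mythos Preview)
Your proposal is correct and follows essentially the same approach as the paper: Duhamel's formula for the variation of the heat trace, integration by parts in the Mellin integral justified by Lemmas~\ref{lemma:daitmrht} and~\ref{lemma:etphohmptgt0}, evaluation of the $s$-derivative at zero via the residue computation, and finally integration along the path $u\mapsto u\varphi$ with the constant fixed by the case $\varphi=0$. The only cosmetic difference is that the paper phrases the last step as determining the integration constant $C$ rather than explicitly integrating from $0$ to $1$, but the content is identical.
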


\subsubsection{Extremal properties of the relative determinant}

Given Pol\-ya\-kov's formula for the relative determinant,
the study of the extremal pro\-per\-ties of it is exactly the same as in OPS \cite{OPS1}
for the case when $\chi(M)<0$. We assume now that $\chi(M)<0$. Let us recall the analysis in
\cite{OPS1} as we adapt it to our case.
On ${\mathcal F}_{11}$ consider the following functional:
\begin{equation}
\Phi(\varphi) = {\frac{1}{2}} \int_{M} \vert
\nabla_{g}\varphi\vert^{2} \ dA_{g} +
 \int_{M}  R_{g}\ \varphi \  dA_{g} - \pi \chi(M)  \log \left(\int_{M} e^{2\varphi}
dA_{g}\right).
\end{equation}
It is straightforward that  $\Phi$ is translation invariant and that minimizing $\Phi$ is the same
as maximizing $\log \det(\Delta_{h},\Delta_{1,0})$ for metrics of constant area.
Since we are considering $\chi(M)< 0$, we have that $\Phi$ is convex.
In the same way as in \cite{OPS1}, we have that
$$\Phi(\varphi) = -6\pi \log \det(\Delta_{h},\Delta_{1,0}) + \pi(6-\chi(M))\log(A_{h}).$$

Let us drop the constraint $A_{h}=1$. Then, if $\varphi$ is a minimizer of
$\Phi$ the equation ${\frac{\delta \Phi}{\delta
\psi}} (\varphi) = 0$ holds for all $\psi \in {\mathcal F}_{11}$.
This implies that:
$$R_{h} =
e^{-2\varphi} (\Delta_{g}\varphi + R_{g}) = {\frac{2\pi
\chi(M)}{\int_{M} e^{2\varphi}
 dA_{g}}},$$
i.e. $R_{h}$ should be constant. If $A_{h}=2\pi(2p+m-2)$, it follows that $R_{h}
= -1$, where $p$ is the genus of $M$ and $m$ is the number of
cusps.

On the other hand if $R_{h} = \text{constant}$ we have that:
\begin{eqnarray*}
{\frac{\delta \Phi}{\delta \psi}}(\varphi) &=& \int_{M} e^{2\varphi}
\psi R_{h} dA_{g} - {\frac{\pi \chi(M)}{A_{h}}}
\int_{M} 2 \psi e^{2\varphi} dA_{g}\\
&=& \int_{M} {\frac{e^{2\varphi} \psi}{A_{h}}} (R_{h}A_{h} - 2\pi
\chi(M))  dA_{g} = 0,\end{eqnarray*} because of Gauss-Bonnet theorem.
Thus, the critical points of $\Phi$ are the metrics of constant curvature.
The convexity of $\Phi$ assures that the critical points are minima.

Our problem is to find a maximizer of the relative determinant among metrics inside the
following conformal class:
$${\conf}_{1,11}(g)=\{h\vert h=e^{2\psi}g, \text{ with } \psi \in {\mathcal F}_{11} \text{ and }
A_{h}=2\pi(2p+m-2)\}.$$

If the initial metric $g$ on $M$ is a metric of
negative constant curvature $g=\tau$ with $R_{\tau}=-1$, and we take the conformal
class ${\conf}_{1,11}(\tau)$, $\tau$ itself is the maximizer of the relative determinant
and $\tau\in{\conf}_{1,11}(\tau)$. The maximizer trivially exists inside the
conformal class. However, if the starting metric $g$ on
$M$ is a metric that is hyperbolic only in the cusps, the differential
equation for the curvature on the cusps is:
$$-e^{2\varphi} = \Delta_{g}\varphi -1.$$
This implies that in the cusps the function $\varphi$ should decay at infinity as
$y^{-1}$. In this case the function $\varphi$ is outside the
conformal class under consideration. Therefore in order to have a
maximizer of the relative determinant inside the conformal class we
need to be able to define the relative determinant for Laplacians whose
metrics have conformal factors $e^{2\varphi}$ with $\varphi$ having
a decay as $y^{-1}$ at infinity.

As it was mentioned in the introduction, in \cite{AAR} P. Albin, F. Rochon and the author consider renormalized determinants on
Laplace operator on more general surfaces that also include swac. In that case the authors use Vai\-llant's results
in \cite{Vaillant} to have an asymptotic
expansion of the renormalized trace. The conditions on the conformal factor imposed by Vaillant are di\-ffe\-rent to ours, but
conformal factors that decay as $y^{-1}$ at infinity are included. Then Ricci flow is used to prove existence of the maximizer.

We could use the fact that if an operator is trace class, its trace coincides with its renormalized trace. Thus we
could use Vaillant's result to
define our relative determinant in terms of the renormalized determinant of the Laplacian and of the one of our model operator.
However, in the Ricci flow proof in \cite{AAR} two different rescalings take place. When we consider relative determinants,
re-scaling implies to modify the model operator as well. This is an interesting open problem.

\appendix

\section{}
\label{appendix:hkeacm}
In this appendix we give the proof of Lemma \ref{lemma:hkeacm}. We prove the estimate of $K_{1,h}$.
The estimate of $K_h$ then follows by a standard gluing parametrix construction. We use the notation introduced in Section
\ref{subsection:expanstrace1} and Proposition \ref{prop:casmpexptraoA}. Let us recall equation (\ref{eq:hkhitklh}):
$$K_{1,h}(z,w,t)=\sum_{m\in \Z}
k_{h}(\wt z, \wt w+m,t),$$
where $\pi(\wt z)= z$, $\pi(\wt w)= w$, and $\wt{z}=(x_1,y_1)$ and $\wt{w}=(x_2,y_2)$ can be chosen so that $0\leq x_i \leq 1$.

We know that $d_{h}(z,w) = \inf_{m\in \Z} d_{\hat{h}}(\wt{z},\wt{w}+m)\leq d_{\hat{h}}(\wt{z},\wt{w}+m)$ for all $m\in \Z$. Then using the estimate in equation (\ref{eq:ehklhH2}) with constant $c_1>0$ corresponding to the metric $h$, we obtain:
\begin{multline*}
K_{1,h}(z,w,t) \ll  t^{-1} \sum_{m\in \Z} \exp{\left(
-{\frac{c_{1}d_{\hat h}^{2}(\wt z, \wt w +m)}{t}}\right)}\\ \leq t^{-1}
e^{-{\frac{c_{1}d_{h}^{2}(z,w)}{2t}}}
\sum_{m\in \Z} e^{-{\frac{c_{1}d_{\hat h}^{2}(\wt z, \wt w +m)}{2t}}}\\ \leq
t^{-1} e^{-{\frac{c_{1}d_{h}^{2}(z,w)}{2t}}} \left( e^{-{\frac{c_{2}d_{\hat g}^{2}(\wt z, \wt w)}{2t}}} +
\sum_{m\neq 0} e^{-{\frac{c_{2}d_{\hat g}^{2}(\wt z, \wt w +m)}{2t}}} \right)
\end{multline*}
Now we use the formula for the hyperbolic distance to estimate it; for $m\neq 0$ we have:
\begin{eqnarray*}
d_{\hat g}((x_1, y_1), (x_2 + m, y_2)) = \cosh^{-1} \left(1 + {\frac{(x_1 - x_2 - m)^{2}+ (y_1-y_2)^2}{2 y_1 y_2}}\right)\\
\geq  \log \left(1 + {\frac{(x_1 - x_2 - m)^{2}}{2y_1 y_2}}\right) \geq \log \left(1 + {\frac{(|m|-1)^2}{2y_1 y_2}}\right)
\end{eqnarray*}
since $-1 \leq x_1-x_2 \leq 1$ and 
$(|m|-1)^2 \leq (x_1-x_2 - m)^2 \leq (|m|+1)^2$, if $|m|\neq 0$. We proceed now to estimate the series in the same way
as in (\ref{eq:bbaauxintfs3}), but we do not need to restrict the values of $y_1$ and $y_2$ to $[1,a]$ any more.
We keep the value $y_1 y_2$ in the estimates instead of using the bounded $a^2$
\begin{multline*}
\sum_{|m|\geq 2} e^{-{\frac{c_{2}d_{\hat g}^{2}(\wt z, \wt w + m)}{t}}}
\leq \sum_{|m|\geq 1} e^{-{\frac{c_{2} \log(1 + {\frac{m^{2}}{2y_1 y_2}})^{2}}{2t}}}\\
\ll \int_{1}^{\infty} e^{-{\frac{c_{2} \log(1 + {\frac{u^{2}}{2y_1 y_2}})^{2}}{2t}}} du \\
\ll y_1^{1/2} y_2^{1/2} (1+\sqrt{t}e^{ct}) \leq C(\tau)y_1^{1/2} y_2^{1/2}
\end{multline*} for some constant $C(\tau)$ that depends on $\tau$, $0<t\leq \tau$. Putting all the terms together we obtain:
\begin{multline*}
K_{1,h}(z,w,t) \ll
t^{-1} e^{-{\frac{c_{1}d_{h}^{2}(z,w)}{2t}}} \left( 2 + e^{-{\frac{c_{2}d_{\hat g}^{2}(\wt z, \wt w)}{2t}}} +
\sum_{m\neq 0} e^{-{\frac{c_{1} \log(1 + {\frac{m^{2}}{2y_1 y_2}})^{2}}{2t}}} \right)\\
\ll t^{-1} y_1^{1/2} y_2^{1/2} e^{-{\frac{c_{1}d_{h}^{2}(z,w)}{2t}}}.
\end{multline*}

For the derivatives of the heat kernel we apply the results by S. Y. Cheng, P. Li and S. T. Yau in \cite{ChLY},
Theorems $6$ and $7$, to $(\H,\hat{h})$ that has bounded geometry. The fist two derivatives of the heat kernel
$K_{1,h}$ can be estimated in the same way as we did for the heat kernel. As the authors point out in \cite{ChLY},
the constant in each estimate will depend on the curvature of $M$ and its covariant derivatives.

\section{}

\subsection{Observation}
In the proof of Theorem \ref{theorem:trchocdacm}, we repeatedly make
use of the following elementary facts:
\begin{enumerate}
\item For any $a>0$, and $b,n ,m \in \R$, we have that:
$$\int_{n}^{m}e^{-ax^{2}-bx}dx = {\frac{e^{b^{2}/4a}}{\sqrt{a}}}
\int_{\sqrt{a}(n+{\frac{b}{2a}})}^{\sqrt{a}(m+{\frac{b}{2a}})}
e^{-v^{2}}dv \leq {\frac{\sqrt{\pi} e^{b^{2}/4a}}{\sqrt{a}}}.$$
\item For any $c>0$, $0<t\leq T$, $k,\ell \geq 0$ with $k+\ell>2$ we
have:
\begin{equation}
\int_{1}^{\infty} \int_{1}^{\infty} y^{-k} y'^{-\ell}
e^{-\frac{c}{t}\log(y/y')^{2}} dy dy'\leq \sqrt{t}e^{(1-k)^{2}t/c}.
\label{eq:integral1}
\end{equation}
\item
Let $\varphi \in C^{\infty}(M)$, $\psi=e^{-2\varphi}-1$ and
$\wt{\psi}=e^{2\varphi}-1$. If $\varphi\vert_{Z}(y,x)$,
$\Delta_{g}\varphi\vert_{Z}(y,x)$ and $\vert
\nabla_{g}\varphi\vert_{g}\vert_{Z} (y,x)$ are $O(y^{-k})$ as $y\to
\infty$, then so are $\psi\vert_{Z}(y,x)$,
$\Delta_{g}\psi\vert_{Z}(y,x)$, $\vert
\nabla_{g}\psi\vert_{g}\vert_{Z} (y,x)$ and the analogues functions
corresponding to $\wt{\psi}$.
\item For $a,b,c>0$, the function $f(t) = t^{-a} e^{-c t^{-b}}$
is bounded on $(0,\infty)$ and $\lim_{t\to 0} f(t)=0$.
\end{enumerate}
\label{ss:convint}

\subsection{Proof of the bounds of the integrals $J_1$, $J_2$ and $J_3$ in
Proposition \ref{prop:b1nauxoBaccf}}

Let us start with $J_1$ that is given by equation (\ref{eq:J1comp}):
\begin{multline*}
J_{1} = \int_{0}^{t}\int_{Z_{a}}\int_{[1,\frac{4a}{5}]\times S^{1}}
\psi_{2}(z) (K_{1,h}(z,z',s) +
p_{h,D}(z,z',s))e^{2\varphi(z')}\\
\psi(z') \Delta_{Z,g} (K_{1,g}(z',z,t-s) + p_{1,D}(z',z,t-s)) \
dA_{g}(z')\ dA_{g}(z)\ ds.\end{multline*}

Note that on this region $a\leq y<\infty$ and $1\leq y' \leq
{\frac{4a}{5}}$,
$\log(y/y')$ is bounded away from zero. Using the estimates of the heat kernels and their
derivatives we obtain:
\begin{multline*}
\vert J_{1}\vert \ll \int_{0}^{t}\int_{a}^{\infty}
\int_{1}^{\frac{4a}{5}} s^{-1} (t-s)^{-2} y
(e^{-{\frac{c\log(y/y')^{2}}{s}}}+ e^{-{\frac{c\log(y)^{2}}{s}}}
e^{-{\frac{c\log(y')^{2}}{s}}})\\  y'^{-k+1}
(e^{-{\frac{c\log(y/y')^{2}}{t-s}}} +
e^{-{\frac{c\log(y)^{2}}{t-s}}} e^{-{\frac{c\log(y')^{2}}{t-s}}})
{\frac{dy'}{y'^{2}}}
{\frac{dy}{y^{2}}} ds\\
\ll a t^{-2} \int_{0}^{t/2}\int_{a}^{\infty} s^{-1} y^{-1}
(e^{-{\frac{c\log(5y/4a)^{2}}{s}}}+ e^{-{\frac{c\log(y)^{2}}{s}}})
dy ds\\
 + a t^{-1}  \int_{t/2}^{t} \int_{a}^{\infty}
(t-s)^{-2} y^{-1} (e^{-{\frac{c\log(5y/4a)^{2}}{t-s}}} +
e^{-{\frac{c\log(y)^{2}}{t-s}}}) dy ds. \end{multline*}

Since $y\geq a > {\frac{5}{4}}$ we have an estimate in $s$:
$$e^{-{\frac{c\log(5y/4a)^{2}}{s}}} + e^{-{\frac{c\log(y)^{2}}{s}}}
\leq e^{-{\frac{c\log(5/4)^{2}}{2s}}}
(e^{-{\frac{c\log(5y/4a)^{2}}{2s}}} +
e^{-{\frac{c\log(y)^{2}}{2s}}})$$ and
$\int_{a}^{\infty}y^{-1}e^{-{\frac{c\log(5y/4a)^{2}}{2s}}}dy =
\int_{\frac{5}{4}}^{\infty}v^{-1}e^{-{\frac{c\log(v)^{2}}{2s}}}dv\ll
\sqrt{s}$. We get a similar estimate for $t-s$, and together these
give:
\begin{align*}
\vert J_{1}\vert &\ll a t^{-2} \int_{0}^{t/2} s^{-1}
e^{-{\frac{c\log(5/4)^{2}}{2s}}} \int_{\frac{5}{4}}^{\infty}  y^{-1}
e^{-{\frac{c\log(y)^{2}}{2 s}}} dy ds \\  & \quad \quad +  a t^{-1}
\int_{t/2}^{t} (t-s)^{-2} e^{-{\frac{c\log(5/4)^{2}}{2(t-s)}}}
\int_{\frac{5}{4}}^{\infty}  y^{-1}
e^{-{\frac{c\log(y)^{2}}{2(t-s)}}} dy ds\\ &\ll a t^{-2}
\int_{0}^{t/2} s^{-1/2} e^{-{\frac{c\log(5/4)^{2}}{2s}}} ds +  a
t^{-1} \int_{t/2}^{t} (t-s)^{-3/2}
e^{-{\frac{c\log(5/4)^{2}}{2(t-s)}}} ds\\ &\ll a t^{-2}
e^{-{\frac{c\log(5/4)^{2}}{4t}}} \int_{0}^{t/2} ds + a t^{-1}
e^{-{\frac{c\log(5/4)^{2}}{2t}}} \int_{t/2}^{t} ds \ll a
(t^{-1}+1)e^{c_{1}/t} \ll a e^{-{\frac{c'}{t}}},
\end{align*}
for some constants $c_{1}, c' >0$, where we also used part $(4)$ of Observation \ref{ss:convint}.

For $J_{2}$, we had reduced the problem to the following estimate:
\begin{align*}
\vert J_{2}\vert  \ll \int_{t/2}^{t} \Vert
M_{\chi_{Z_{\frac{4a}{5}}}} M_{\psi}
\Delta_{Z,g}e^{-s/2\Delta_{Z,g}}M_{\phi}^{-1}\Vert_{2} \Vert
M_{\phi}e^{-s/2\Delta_{Z,g}}\Vert_{2} ds.
\end{align*}
Now we proceed to estimate each of the HS norms appearing as integrand on the right-hand side as follows:
\begin{align*}
&\Vert M_{\chi_{Z_{\frac{4a}{5}}}} M_{\psi}
\Delta_{Z,g}e^{-s/2\Delta_{Z,g}}M_{\phi}^{-1}\Vert_{2}^{2}\\
& \quad \quad
= \int_{Z_{\frac{4a}{5}}}\int_{Z} \vert \psi(z) \Delta_{Z,g}
K_{Z,g}(z,z',s/2)\phi(z')^{-1}\vert^{2} dA_{g}(z')dA_{g}(z)\\ & \quad \quad \ll
\int_{\frac{4a}{5}}^{\infty} \int_{1}^{\infty} y^{-2k} y y' s^{-4}
(e^{-\frac{4c}{s} (\log(y/y'))^{2}} + e^{-\frac{4c}{s}
(\log(yy'))^{2}} )y' {\frac{dy'}{y'^{2}}} {\frac{dy}{y^{2}}}\\ & \quad \quad=
s^{-4} \int_{\frac{4a}{5}}^{\infty} \int_{1}^{\infty} y^{-2k-1}
e^{-\frac{4c}{s} (\log(y'/y))^{2}}\ dy' dy\\ & \quad \quad \quad \quad \quad \quad
+ s^{-4}
\int_{\frac{4a}{5}}^{\infty} \int_{1}^{\infty} y^{-2k-1}
e^{-\frac{4c}{s} (\log(y'))^{2}}\ dy' dy.
\end{align*}

The first integral in the last line above can be estimated by fixing $y$ and
making the change of variables $v = \log(y'/y)$, $y' = ye^{v}$,
$dy' = ye^{v}dv$:
\begin{multline*}  s^{-4}\int_{\frac{4a}{5}}^{\infty} \int_{-\log(y)}^{\infty}
y^{-2k} e^{v}e^{{\frac{-4c}{s}}v^{2}}\ dv \ dy\\ \ll s^{-4}
e^{{\frac{s}{4c}}} \sqrt{s} \int_{\frac{4a}{5}}^{\infty} y^{-2k}
\int_{-\infty}^{\infty} e^{-v^{2}}\ dv \ dy \ll s^{-7/2} a^{-2k+1}
e^{{\frac{s}{4c}}}. \end{multline*} As for the second integral, we
obtain in a similar way: $$s^{-4} \int_{\frac{4a}{5}}^{\infty}
\int_{1}^{\infty} y^{-2k-1} e^{-\frac{4c}{s} (\log(y'))^{2}}\ dy' \
dy \ll s^{-7/2} e^{{\frac{s}{4c}}} a^{-2k}.$$ Thus,
$$\Vert
M_{\chi_{Z_{\frac{4a}{5}}}} M_{\psi}
\Delta_{Z,g}e^{-s/2\Delta_{Z,g}}M_{\phi}^{-1}\Vert_{2} \ll
s^{-7/4}(a^{-k} + a^{-k+1/2}).$$

For the operator $M_{\phi}e^{-s/2\Delta_{Z,g}}$, using equation
(\ref{eq:mphkgcaux}) we have:
\begin{align*}
&\Vert M_{\phi}e^{-s/2\Delta_{Z,g}} \Vert_{2}^{2} \\
& \ll
\int_{1}^{\infty} \int_{1}^{\infty} s^{-2} y^{-1} y y'
(e^{-{\frac{2c}{s}}(\log(y/y'))^{2}} +
e^{-{\frac{2c}{s}}(\log(yy'))^{2}})^{2} {\frac{dy'}{y'^{2}}}
{\frac{dy}{y^{2}}}\\
& \ll \int_{1}^{\infty} \int_{1}^{\infty} s^{-2} y'^{-1} y^{-2}
(e^{-{\frac{4c}{s}}(\log(y/y'))^{2}} +
e^{-{\frac{4c}{s}}(\log(yy'))^{2}}) dy' dy\\ & \ll s^{-2}\sqrt{s}
e^{s/4c} + s^{-2} \int_{1}^{\infty} y'^{-1}
e^{-{\frac{4c}{s}}(\log(y'))^{2}} dy' \ll s^{-3/2} (1+e^{s/4c}).
\end{align*}
Since $s\leq t\leq 1$ we have that $\Vert M_{\phi}
e^{-s/2\Delta_{Z,g}} \Vert_{2} \ll s^{-3/4}$. It follows that:
$$\vert J_{2}\vert \ll \int_{t/2}^{t} s^{-7/4} (a^{-k} + a^{-k+1/2})\cdot
s^{-3/4} ds \ll a^{-k+1/2} t^{-3/2}.$$

Now, for $J_{3}$ we have:
\begin{multline*} J_{3}= \int_{t/2}^{t}\int_{Z_{a}}
\int_{Z_{\frac{4a}{5}}} \psi_{2}(z) K_{Z,h}(z,z',s)e^{2\varphi(z')}
\chi_{Z_{\frac{4a}{5}}}(z')\\
(\Delta_{Z,g}-\Delta_{Z,h})_{z'} K_{Z,g}(z',z,t-s)
dA_{g}(z')dA_{g}(z)ds.\end{multline*} Remember that
$\Delta_{Z,g}-\Delta_{Z,h} = (e^{2\varphi(z')}-1) \Delta_{Z,h}=
\wt{\psi}(z')\Delta_{Z,h}$, so the previous equation becomes:
\begin{multline*} J_{3}= \int_{t/2}^{t}\int_{Z_{a}}
\int_{Z_{\frac{4a}{5}}} \{\psi_{2}(z) K_{Z,h}(z,z',s)
\chi_{Z_{\frac{4a}{5}}}(z') \wt{\psi}(z') \\ (\Delta_{Z,h}
K_{Z,g}(z',z,t-s)) e^{-2\varphi(z)} \}
\ dA_{h}(z')\ dA_{h}(z)\ ds\\
= \int_{t/2}^{t}\int_{Z_{a}} \int_{Z_{\frac{4a}{5}}} \{\psi_{2}(z)
(\Delta_{Z,h} K_{Z,h}(z,z',s)\wt{\psi}(z'))
\chi_{Z_{\frac{4a}{5}}}(z')\\
 K_{Z,g}(z',z,t-s) e^{-2\varphi(z)}\}
\ dA_{h}(z')\ dA_{h}(z)\ ds\\
= \int_{t/2}^{t}\int_{Z_{a}} \int_{Z_{\frac{4a}{5}}} \{\psi_{2}(z)
e^{-2\varphi(z)}K_{Z,g}(z,z',t-s) \chi_{Z_{\frac{4a}{5}}}(z')\\
(\Delta_{Z,h} \wt{\psi}(z') K_{Z,h}(z',z,s))\}\ dA_{h}(z')\ dA_{h}(z)\ ds.
\end{multline*}
Writing this in terms of the corresponding operators we obtain:
\begin{align*}
J_{3}&=\int_{t/2}^{t}\tr(M_{\psi_{2}} M_{e^{-2\varphi}}
e^{-(t-s)\Delta_{Z,g}} M_{\chi_{Z_{\frac{4a}{5}}}} \Delta_{Z,h}
M_{\wt{\psi}} e^{-s\Delta_{Z,h}})
ds,\\
\vert J_{3}\vert & \leq \int_{t/2}^{t} \Vert
M_{\chi_{Z_{\frac{4a}{5}}}} \Delta_{Z,h} M_{\wt{\psi}}
e^{-s\Delta_{Z,h}} \Vert_{1}\ ds.
\end{align*}
We are now working in $L^{2}(M,dA_{h})$ therefore to simplify
notation we do not write the subindex $h$ in the trace and the HS
norms. In the same way as above we do:
\begin{multline*}
\Vert M_{\chi_{Z_{\frac{4a}{5}}}} \Delta_{Z,h} M_{\wt{\psi}}
e^{-s\Delta_{Z,h}} \Vert_{1}\\ \leq \Vert M_{\chi_{Z_{\frac{4a}{5}}}}
\Delta_{Z,h} M_{\wt{\psi}}
e^{-s\Delta_{Z,h}/2}M_{\phi^{-1}}\Vert_{2} \Vert
 M_{\phi} e^{-s\Delta_{Z,h}/2}\Vert_{2}
\end{multline*}
The kernel of the operator $M_{\chi_{Z_{\frac{4a}{5}}}} \Delta_{Z,h}
M_{\wt{\psi}} e^{-s\Delta_{Z,h}/2}M_{\phi^{-1}}$ is
\begin{align*}
\chi_{Z_{\frac{4a}{5}}}(z')(\Delta_{Z,h}(\wt{\psi}(z')
K_{Z,h}(z',z,s))\phi(z)^{-1}.\end{align*} Using the decay assumptions on
$\varphi$ and its derivatives, we have that:
\begin{multline*}
\vert \Delta_{Z,h}(\wt{\psi} K_{Z,h})\vert^{2} \ll \vert
\wt{\psi}\Delta_{Z,h}K_{Z,h}\vert^{2} + \vert K_{Z,h} \Delta_{Z,h}
\wt{\psi}\vert^{2} + 2\vert \langle \nabla \wt{\psi}, \nabla
K_{Z,h}\rangle \vert^{2}\\
\ll y'^{-2k+1}y (s^{-4} + s^{-2} +
s^{-3})(e^{-{\frac{c}{s}}(\log(y/y'))^{2}} +
e^{-{\frac{c}{s}}(\log(yy'))^{2}})^{2}.
\end{multline*}
Since for $0<s<1$ we have that $s^{-4} + s^{-2} + s^{-3}\ll s^{-4}$,
we can estimate the HS norm by:
\begin{align*}
&\Vert M_{\chi_{Z_{\frac{4a}{5}}}} \Delta_{Z,h} M_{\wt{\psi}}
e^{-s\Delta_{Z,h}/2}M_{\phi^{-1}} \Vert_{2}^{2}\\ & \quad \quad= \int_{Z} \int_{Z}
\vert \chi_{Z_{\frac{4a}{5}}}(z') \wt{\psi}(z') \Delta_{h,z'}
K_{h}(z',z,s/2) \phi(z)^{-1} \vert^{2}  dA_{h}(z') dA_{h}(z)\\ &
\quad \quad\ll s^{-4} \int_{1}^{\infty} \int_{\frac{4a}{5}}^{\infty}
\ y^{2}\ y'^{-2k+1} (e^{-{\frac{2c}{s}}(\log(y/y'))^{2}} +
e^{-{\frac{2c}{s}}(\log(yy'))^{2}})^{2}{\frac{dy'}{y'^{2}}}
{\frac{dy}{y^{2}}}\\ & \quad \quad \ll  s^{-4}
\int_{\frac{4a}{5}}^{\infty} \int_{1}^{\infty} (y'^{-2k-1}
e^{-{\frac{4c}{s}}(\log(y/y'))^{2}} + y'^{-2k-1}
e^{-{\frac{4c}{s}}(\log(y))^{2}}) \ dy \ dy'\\ & \quad \quad \ll
(a^{-2k+1}+a^{-2k}) s^{-7/2} e^{s/4c} \ll a^{-2k+1} s^{-7/2}.
\end{align*}
We finally obtain:
$$\Vert M_{\phi}^{-1} e^{-s/2 \Delta_{Z,h}}
 \wt{\psi}
\Delta_{h} \Vert_{2} \leq a^{-k+1/2} s^{-7/4}.$$

For the operator $e^{-s/2 \Delta_{Z,h}}M_{\phi}$, the proof goes in
the same way as for the operator $M_{\phi}e^{-s/2 \Delta_{Z,g}}$. At
the end we obtain:

\begin{align*}
\Vert e^{-s \Delta_{Z,h}}M_{\phi}\Vert_{2} &= \left( \int_{Z}
\int_{Z} \vert K_{Z,h}(z,z',s/2) \phi(z')\vert^{2} dA_{h}(z')
dA_{h}(z)\right)^{1/2} \ll s^{-3/4}.
\end{align*}

In this way: $$\vert J_{3}\vert \ll \int_{t/2}^{t} a^{-k+1/2}
s^{-7/4} s^{-3/4} ds \ll a^{-k+1/2} t^{-3/2}.$$

\begin{acknowledgements}
This paper expands part of my doctoral thesis. I thank my supervisor Werner M\"uller
for his guidance during and after the project. I am also grateful to Rafe Mazzeo,
Eugenie Hunsicker, and Sylvie Paycha for helpful discussions and their interest in this work.
I thank an anonymous referee for the suggestions and comments. Finally, I thank the Ma\-the\-ma\-tical Institute at the University of Bonn for hosting me during my graduate studies.
\end{acknowledgements}


\begin{thebibliography}{99}

\bibitem{AAR} P. Albin, C.L. Aldana, F. Rochon, {Ricci flow and the determinant of the Laplacian on non-compact surfaces}, ArXiv:0909.0807.

\bibitem{Aldana}
C.L. Aldana, {Inverse spectral theory and relative determinants of
  elliptic operators on surfaces with cusps}, {P}h.{D}. dissertation, {B}onner
  {M}ath. {S}chriften , {U}niv. {B}onn, {M}athematisches {I}nstitut, {B}onn,
  2009.

\bibitem{BGO} T.P. Branson, P.B. Gilkey, B. $\O$rsted, {Leading terms in the heat invariants}, Proceedings of the American Mathematical Society, Vol. 109, Number 2, June 1990.

\bibitem{Bruneau} V. Bruneau, {Propri\'et\'es asymptotiques du spectre continu d'op\'erateurs de Dirac}, These de Doctorat,
Universit\'e de Nantes, 1995.

\bibitem{Bunke} U. Bunke, {Relative Index Theory}, J. Funct. Anal. 105 (1992), 63-76.

\bibitem{CarslawJaeger} H. S. Carslaw, J. C. Jaeger, {Conduction of heat in solids}, Second Edition, Oxford, 1959.

\bibitem{Chavel} I. Chavel, {Eigenvalues in Riemannian Geometry}, Academic Press, 1984.

\bibitem{ChGT} J. Cheeger, M. Gromov, M. Taylor, {Finite propagation speed, kernel estimates for functions of the Laplace operator, and the geometry of complete Riemannian manifolds}, J. Differential Geom. 17 (1982) 15--53.

\bibitem{ChLY} S. Y. Cheng, P. Li, S. T. Yau, {On the Upper Estimate of the Heat Kernel of a
Complete Riemannian Manifold}, Am. J. Math., Vol. 103 (1981) 1021-1063.

\bibitem{ColinV} Y. Colin de Verdi\`{e}re, {Une nouvelle d\'emonstration du prolongement m\'eromorphe des
s\'eries d'Eisenstein}, C. R. Acad. Sci., Paris, Sér. I 293, (1981) 361-363.

\bibitem{Dodziuk} J. Dodziuk, {Maximum principle for parabolic inequalities and the heat flow on open manifolds},
Indiana Univ. Math. J. 32 (1983) 703--716.

\bibitem{Lundelius} R. Lundelius, {Asymptotics of the determinant of the Laplacian on hyperbolic surfaces of finite volume}, Duke Math. J. 71 (1993) 211-242.

\bibitem{Mu} W. M\"uller, {Spectral theory for Riemannian manifolds with cusps and a related trace formula},
Math. Nachr. 111 (1983) 197-288.

\bibitem{Mu2} W. M\"uller, {Spectral geometry and scattering theory for
certain complete surfaces of finite volume}, Invent. math. 109 (1992) 265-303.

\bibitem{Mu3} W. M\"uller, {Relative zeta functions, relative determinants, and scattering
theory}, Comm. Math. Physics 192 (1998) 309-347.

\bibitem{MuSa} W. M\"uller, G. Salomonsen, {Scattering theory for the Laplacian on manifolds with bounded
curvature},  J. Funct. Anal.  253 (2007) 158-206.

\bibitem{OPS1} B. Osgood, R. Phillips, P. Sarnak, {Extremal of Determinants of Laplacians},
J. Funct. Anal. 80 (1988) 148-211.

\bibitem{OPS2} B. Osgood, R. Phillips, P. Sarnak. \textit{Compact Isospectral sets of surfaces}.
JFA 80, 212-234, 1988.
\bibitem{RaySinger} D. B. Ray, I. M. Singer, {$R$-torsion and the Laplacian on Riemannian Manifolds}, Advances in Mathematics 7, (1971) 145-210.

\bibitem{Sakai} T. Sakai, {On eigen-values of Laplacian and curvature of Riemannian manifold},
T$\hat{o}$hoku Math. J. (2) 23 (1971), 589–603.

\bibitem{Vaillant}
B. Vaillant, {Index and spectral theory for manifolds with generalized
  fibred cusps}, {P}h.{D}. dissertation, {B}onner {M}ath. {S}chriften 344,
  {U}niv. {B}onn, {M}athematisches {I}nstitut, {B}onn, 2001.


\end{thebibliography}
\end{document}